\documentclass[12pt]{article}

\usepackage{pdfsync}

\usepackage{latexsym,dsfont,amsfonts,amsmath,amssymb,theorem}
\usepackage{mathrsfs,mathcomp,stmaryrd, multirow}

\usepackage{pstricks,epsfig,psfrag}
\usepackage{url}

\graphicspath{{}{images/}{data/}}

\usepackage[hidelinks,hypertexnames=false,hyperindex=true,pdfpagelabels,linktoc=all]{hyperref}

\usepackage{enumitem}

\setlength{\textheight}{23cm}
\setlength{\textwidth}{16cm}
\setlength{\hoffset}{-1.1cm}
\setlength{\voffset}{-2.5cm}

\newtheorem{theorem}{Theorem}
\newtheorem{lemma}[theorem]{Lemma}

\newtheorem{corollary}[theorem]{Corollary}
\newtheorem{proposition}[theorem]{Proposition}

\newtheorem{algorithm}{Algorithm}

\newenvironment{proof}{\begin{trivlist}
    \item[\hskip\labelsep{\bf Proof.}]}{$\hfill\Box$\end{trivlist}}

{\theoremstyle{plain} \theorembodyfont{\rmfamily}
\newtheorem{remark}[theorem]{Remark}}
{\theoremstyle{plain} \theorembodyfont{\rmfamily}
\newtheorem{example}[theorem]{Example}}

\numberwithin{equation}{section}

\newcommand{\rhoext}{\rho^{\mathrm{ext}}}
\newcommand{\Rext}{R^{\mathrm{ext}}}

\newcommand{\Bext}{B^{\mathrm{ext}}}
\newcommand{\Qext}{Q^{\mathrm{ext}}}
\newcommand{\Lambdaext}{\Lambda^{\mathrm{ext}}}

\newcommand{\diam}{\operatorname{diam}}

\newcommand{\len}{{\ell}}
\newcommand{\wC}{\widetilde{C}}
\newcommand{\Zbar}{\overline{Z}}
\newcommand{\bZbar}{\overline{\bsZ}}

\newcommand{\bk}{{\boldsymbol{k}}}
\newcommand{\bsk}{{\boldsymbol{k}}}

\newcommand{\bsB}{\mathbf{B}}

\newcommand{\scA}{\mathscr{A}}
\newcommand{\IP}{\mathbb{P}}

\newcommand{\ha}{\widehat{a}}
\newcommand{\cN}{\mathcal{N}}
\newcommand{\bN}{\mathbb{N}}
\newcommand{\Mat}{Mat\'{e}rn\ }
\newcommand{\satop}[2]{\stackrel{\scriptstyle{#1}}{\scriptstyle{#2}}}

\newcommand{\bsDelta}{{\boldsymbol{\Delta}}}

\newcommand{\bsb}{{\boldsymbol{b}}}
\newcommand{\bsc}{{\boldsymbol{c}}}

\newcommand{\bsalpha}{{\boldsymbol{\alpha}}}
\newcommand{\bsgamma}{{\boldsymbol{\gamma}}}

\newcommand{\bsnu}{{\boldsymbol{\nu}}}
\newcommand{\bsm}{{\boldsymbol{m}}}
\newcommand{\bsr}{{\boldsymbol{r}}}

\newcommand{\bsp}{{\boldsymbol{p}}}

\newcommand{\bst}{{\boldsymbol{t}}}

\newcommand{\bsv}{{\boldsymbol{v}}}

\newcommand{\bsx}{{\boldsymbol{x}}}

\newcommand{\bsy}{{\boldsymbol{y}}}
\newcommand{\bsY}{\boldsymbol{Y}}
\newcommand{\bsz}{{\boldsymbol{z}}}
\newcommand{\bsZ}{{\boldsymbol{Z}}}
\newcommand{\bsw}{{\boldsymbol{w}}}

\newcommand{\bszero}{{\boldsymbol{0}}}

\newcommand{\rd}{\mathrm{d}}
\newcommand{\ri}{\mathrm{i}}
\newcommand{\bbR}{\mathbb{R}}
\newcommand{\bbZ}{\mathbb{Z}}
\newcommand{\bbP}{\mathbb{P}}
\newcommand{\bE}{\mathbb{E}}
\newcommand{\bbE}{\mathbb{E}}
\newcommand{\bbN}{\mathbb{N}}

\newcommand{\calF}{\mathcal{F}}
\newcommand{\cI}{\mathcal{I}}
\newcommand{\cA}{\mathcal{A}}
\newcommand{\cT}{\mathcal{T}}

\newcommand{\cF}{\mathcal{F}}
\newcommand{\calG}{\mathcal{G}}
\newcommand{\cG}{\mathcal{G}}
\newcommand{\calO}{\mathcal{O}}

\renewcommand{\Re}{\mathfrak{Re}}
\renewcommand{\Im}{\mathfrak{Im}}

\newcommand{\tr}{{\sf T}}

\newcommand{\setu}{\mathfrak{u}}
\newcommand{\mask}[1]{}

\newcommand{\tu}{{u}}

\newcommand{\bR}{\mathbb{R}}

    % the variance

\makeatletter\@addtoreset{equation}{section}\makeatother

\definecolor{darkred}{RGB}{139,0,0}
\definecolor{darkgreen}{RGB}{0,100,0}
\definecolor{darkmagenta}{RGB}{139,0,139}
\definecolor{darkpurple}{RGB}{110,0,180}
\definecolor{darkblue}{RGB}{40,0,200}
\definecolor{darkorange}{RGB}{255,140,0}

\newcommand{\rcov}{r_{\mathrm{cov}}}

\begin{document}
\date{\today}

\title{Circulant embedding with QMC -- analysis for elliptic PDE with lognormal coefficients}
\author{I. G. Graham${}^{1}$, F. Y. Kuo${}^2$, D. Nuyens${}^3$, R. Scheichl${}^1$, and
  I. H. Sloan${}^2$ }

\maketitle

\begin{center}
\begin{scriptsize}

\vspace{-0.75cm}

\noindent
${}^1$ Dept of Mathematical Sciences, University of Bath, Bath BA2
7AY UK \\\indent {\tt I.G.Graham@bath.ac.uk}, \ {\tt
  R.Scheichl@bath.ac.uk}

\vspace{0.1cm}

\noindent
${}^2$ School of Mathematics and Statistics, University of NSW,
Sydney NSW 2052, Australia. \\ \indent {\tt f.kuo@unsw.edu.au},\  {\tt
  i.sloan@unsw.edu.au}

\vspace{0.1cm}

\noindent ${}^3$ Department of Computer Science, KU Leuven,
Celestijnenlaan 200A, B-3001 Leuven.
\\ \indent {\tt dirk.nuyens@cs.kuleuven.be}

\end{scriptsize}
\end{center}

\begin{abstract}
In a previous paper (J.\ Comp.\ Phys.\ \textbf{230} (2011), 3668--3694,
see ref.~\cite{GrKuNuScSl:11}), the authors proposed a new practical
method for computing expected values of functionals of solutions for
certain classes of elliptic partial differential equations with random
coefficients. This method was based on combining quasi-Monte Carlo
(QMC) methods for computing the expected values with circulant embedding
methods for sampling the random field on a regular grid.
It was found capable of
handling fluid flow problems in random heterogeneous media with high
stochastic dimension, but \cite{GrKuNuScSl:11} did not provide a
convergence theory. This paper provides a convergence analysis for the method
in the case when the QMC method is a specially designed randomly shifted
lattice rule.
The convergence result depends on the eigenvalues of the underlying
nested block circulant matrix and can be independent of the number of
stochastic variables under certain assumptions.
In fact the QMC analysis applies to general factorisations of the covariance
matrix to sample the random field.
The error analysis for the underlying fully discrete finite element method
allows for locally refined meshes (via interpolation from a regular sampling
grid of the random field). Numerical results on a non-regular domain with
corner singularities in two spatial dimensions and on a regular domain in three
spatial dimensions are included.
\end{abstract}

\begin{footnotesize}
\noindent{\bf Keywords:} Quasi-Monte Carlo,
High-Dimensional Cubature,  Circulant Embedding, Random
Porous Media, Fluid Flow, Statistical Homogeneity, Fast Fourier Transform,
Convergence Analysis
\end{footnotesize}

\section{Introduction} \label{sec:Intro}

In the paper \cite{GrKuNuScSl:11}, the present authors proposed a new
practical algorithm for solving a class of elliptic partial differential
equations
 with coefficients given by statistically homogeneous lognormal
random fields -- and in particular for computing expected values of
spatial functionals of such solutions. In this algorithm, the required
expected value is written as a multidimensional integral of (possibly)
high dimension, which is then approximated by a quasi-Monte Carlo (QMC)
method. Each evaluation of the integrand is obtained by using a fully
discrete finite element (FE) method  to approximate the PDE. A key
original feature of the method in \cite{GrKuNuScSl:11} was the procedure
for sampling the random field: instead of sampling the continuous random
field by a truncated Karhunen--Lo\`{e}ve (KL) expansion, the field was
sampled discretely on a regular grid covering the domain and then
interpolated at the (irregularly spaced) quadrature points. This
completely eliminated  the problem of truncation error from the KL
expansion, but requires the factorisation of a dense matrix of dimension
equal to the number of sample points.  In \cite{GrKuNuScSl:11} this was
done using a circulant embedding technique. The method was found to be
effective even for problems with high stochastic dimension, but
\cite{GrKuNuScSl:11} did not contain a convergence analysis of the
algorithm.

The main purpose of the present paper is to provide an analysis for a
method closely related to that of \cite{GrKuNuScSl:11}, with an error
bound that is independent of stochastic dimension, and a convergence rate
faster than that of a simple Monte Carlo method. The setting differs in
two ways from \cite{GrKuNuScSl:11}: first, the FE method considered here
is the standard nodal  FE method for elliptic problems, whereas in
\cite{GrKuNuScSl:11}  the mixed FE method  was used; and second, the QMC
method considered here is a specially designed randomly shifted lattice
rule (see \eqref{eq:QMC} below), instead of using Sobol$'$ points
as in \cite{GrKuNuScSl:11}.  (We expect the present analysis can be
extended to mixed FEs using results in \cite{GrScUl:13}, but
do not attempt this here.)

Thus our PDE model (written initially in strong form) is
\begin{align} \label{eq:diffeq}
  -\nabla \cdot (a(\bsx,\omega) \nabla u(\bsx,\omega))
  &\,=\, f(\bsx)\,
   \qquad \text{for } \bsx \in D\subseteq [0,1]^d,\ \ \text{and almost all}\ \
   \omega \in \Omega .
\end{align}
Given
a functional $\cG$ of $u$ with respect to the spatial variable $\bsx$,
our aim here (as in \cite{GrKuNuScSl:11}) is to compute efficiently and
accurately $\bE[\cG(u)]$, the expected value of $\cG(u(\cdot,
\omega))$.
The (spatial) domain $D \subset \mathbb{R}^d$ ($d = 1,2,3$) in \eqref{eq:diffeq} is
assumed to be a bounded interval ($d=1$), polygon ($d=2$) or Lipschitz
polyhedron ($d = 3$), while $\Omega$ is the set of events in a suitable
probability space $(\Omega,\cA,\bbP)$. The solution $u$ is required to
satisfy the homogeneous Dirichlet condition $u = 0$ on the boundary
$\partial D$ of $D$. The spatial domain $D$ is allowed to be irregular but
we assume for convenience that it can be embedded in the $d$-dimensional
unit cube; this is always possible after a suitable affine scaling. (The
length-scale of our random field is therefore always considered with
respect to the unit cube.) The driving term $f$ is for simplicity taken to
be deterministic.

We consider the lognormal case where
\begin{equation}\label{eq:field}
 a(\bsx, \omega) \,=\, \exp (Z(\bsx,\omega)),
\end{equation}
with $Z(\bsx, \omega)$ a Gaussian random field with prescribed mean
$\Zbar(\bsx)$ and covariance
\begin{equation}\label{eq:covar}
  \rcov(\bsx,\bsx') \,:=\,
  \mathbb{E}[(Z(\bsx,\cdot)-\Zbar(\bsx))(Z(\bsx',\cdot)-\Zbar(\bsx')],
\end{equation}
where the expectation is with respect to the Gaussian measure. Lognormal
random fields are commonly used in applications, for example in hydrology
(see, e.g., \cite{NaHaSu:98a} and the references there).
Throughout
we will assume that $Z$ is \emph{stationary} (see, e.g.,
\cite[p.~24]{Ad:81}), i.e., its covariance function satisfies
\begin{equation} \label{eq:defrho1}
 \rcov(\bsx,\bsx') \,=\, \rho(\bsx-\bsx').
\end{equation}
Strictly speaking, $\rho$ only needs to be defined on a sufficiently large
ball $B(0,\diam(D))$ for the prescription above, but as in many
applications we assume that it is defined on all of $\bR^d$. A particular
case that will be discussed extensively, is the \Mat covariance, where
$\rho$ is isotropic, i.e., $\rho$ depends on $\bsx$ only through its
Euclidean length $\|\bsx\|_2$.

In the present paper \eqref{eq:diffeq} is discretised by piecewise linear
finite elements in space, using simplicial meshes with maximum diameter
$h$, and a simple low order quadrature rule for suitable approximation of
the associated stiffness matrix. In consequence, the only values of the
stochastic coefficient $Z(\bsx, \omega)$ that enter the FE computation are
its values at the quadrature points. However, the FE quadrature points
will in general be irregularly distributed, and (for refined meshes) very
large in number, typically rendering a direct evaluation of the field at
the quadrature points prohibitively expensive. It is much more efficient,
as explained below, to instead evaluate exactly the realisation of the
field at a uniform grid of
\[
 M \,=\, (m_0+1)^d
\]
points $\bsx_1,\bsx_2,\ldots,\bsx_M$ on the $d$-dimensional unit cube
$[0,1]^d$ (containing the domain
$D$), with a fixed integer $m_0$ and with grid spacing $h_0 := 1/m_0$. We
assume further that $h_0\sim h$. (The extension to general tensor product
grids with different mesh sizes in the different coordinate directions is
straightforward and not discussed here.) We use \emph{multilinear interpolation}
to obtain a sufficiently good approximation
of the field $Z(\bsx, \omega)$ at any other spatial point $\bsx\in D$,
i.e., we use repeated linear
interpolation in each coordinate direction with respect to the vertices of
the surrounding grid cell.

At this stage of the algorithm, the output is the approximate FE solution
$u_h(\bsx, \omega)$, which inherits randomness from the input data
\begin{equation*}
 \bsZ(\omega) \,:=\, (Z(\bsx_1,\omega), \ldots , Z(\bsx_M, \omega))^{\top} \ .
\end{equation*}
The $M$-vector $\bsZ(\omega)$ is a Gaussian random vector with a
covariance structure inherited from the continuous field $Z$. Thus it has
mean $\overline{\bsZ} :=
(\overline{Z}(\bsx_1),\ldots,\overline{Z}(\bsx_M))^\top$ and a positive
definite covariance matrix
\begin{equation} \label{eq:Rmatrix}
  R \,=\,  [\rho(\bsx_i-\bsx_j)]_{i,j=1}^M.
\end{equation}
Because of its finite length, $\bsZ(\omega)$ can be expressed exactly (but
not uniquely) as a linear combination of a finite number of i.i.d.\
standard normal random variables, i.e., as
\begin{equation}\label{eq:Gauss_exp1}
 \bsZ(\omega) \,=\,  B \bsY(\omega)\  + \ \bZbar \  ,
 \quad \text{where} \quad  \bsY \sim \cN(\boldsymbol{0}, I_{s \times s}).
\end{equation}
for some real $M\times s$ matrix $B$ with $s\ge M$ satisfying
\begin{equation} \label{eq:Rfactor}
  R \,=\, BB^\top.
\end{equation}
To see this, note that \eqref{eq:Gauss_exp1} and \eqref{eq:Rfactor} imply
$$
  \mathbb{E}[(\bsZ-\bZbar)(\bsZ-\bZbar)^{\top}]
  \,=\, \mathbb{E}[B\bsY \bsY^\top B^{\top}]
  \,=\, B\,\mathbb{E}[\bsY \bsY^\top]B^{\top}=BB^{\top} \,=\, R.
$$

An efficient computation of a suitable factorisation \eqref{eq:Rfactor}
using the extension of $R$ to a nested block circulant matrix and then
diagonalisation using FFT (the ``circulant embedding method'') is
described in detail in \cite{ChWo:94,ChWo:97,DiNe:97,GrKuNuScSl:11}.
 It is essential for that approach that the random field is sampled
  on a uniform grid of points.
In a related paper \cite{paper1}, we have analysed certain key
properties of the circulant extension and its factorisation, which will be
crucial for efficiency and for the dimension independence of the QMC
convergence analysis in this paper. Other
approaches, such as Cholesky factorisation or direct spectral
decomposition, could also be used to find a factorisation of the form
\eqref{eq:Rfactor}. These alternative approaches have the advantage that
they do not require the sample grid to be uniform, but when $M$ is
large these approaches are likely to be prohibitively expensive. Some
ideas of how to overcome this problem using a
pivoted Cholesky factorisation or hierarchical matrices can be found
in \cite{HaPeSc:12} or \cite{BlCoDa:16,FeKuSl:17},
respectively.

From now on, realisations of the random vector $\bsY(\omega)$ are
denoted by $\bsy$. Thus, $\bsy$ contains $s$ independent realisations of
$\cN(0,1)$. Hence, if
$F: \bR^s \rightarrow \bR$ is any Lebesgue measurable function then the
expected value of  $ F(\bsY(\omega))$ may be written as
\begin{equation}\label{eq:unit_cube}  I_s(F) \ := \int_{\bR^s} F(\bsy)
\prod_{j=1}^s \phi(y_j)  \, \rd \bsy   \  = \  \int_{{(0,1)^s}}
 F(\Phi_s^{-1}(\bsv)) \, \rd \bsv \ ,
\end{equation}
where $\phi$ is the one-dimensional standard normal probability density,
and  $\Phi_s^{-1}$ is the inverse of the cumulative normal
distribution function applied componentwise on $(0,1)^s$.
Since $u_h(\cdot, \omega)$ is derived from $\bsY(\omega)$, we make the
notational convention
\begin{equation}\label{eq:convention}
u_h(\bsx, \omega) \ =  \ u_h(\bsx,\bsy),
\end{equation} and so our
approximation to $\bE[\cG(u)]$ is
\begin{equation}
\bE[\cG(u_h)] \ = \ I_s(F), \quad \text{with}\quad  F(\bsy) = \cG(u_h(\cdot, \bsy)) .
\label{eq:int0}
\end{equation}

Because $s\geq M$, the integral \eqref{eq:unit_cube} can have very high
dimension. However, two important empirical findings in \cite{GrKuNuScSl:11}
were that (for all the applications considered) the accuracy of the QMC cubature rule
based on Sobol$'$ points did not appear to be affected by the size
of~$s$ and that it was always superior to classical Monte Carlo (MC) methods.
Successful computations with $s \sim 4\times 10^6$ were reported in
\cite{GrKuNuScSl:11}. One aim of the present work is to provide a
rigorous proof of the independence of the QMC error on the
dimension~$s$ (under appropriate conditions) for a specially designed
randomly shifted lattice rule. Furthermore, we will also prove here
the superior asymptotic convergence rate of QMC over MC in this setting.

The accuracy of the approximation to $\bE[\cG(u)]$ depends on the FE mesh
diameter~$h$ (through the FE error and the interpolation error),
as well as on the
number $n$ of QMC points. We analyse convergence with respect to both
these parameters. Our first set of theoretical results concern the
accuracy with respect to $h$. In particular, for bounded linear
functionals $\cG$ (with respect to the spatial variable $\bsx$) and under
suitable assumptions, one result,  obtained in \S \ref{subsec:Error},  is
that
\begin{equation}\label{eq:functionals}
  \vert \,\bE \left[\cG(u) - \cG(\tu_h)\right] \vert  \ \leq  \ C h^{2t}\ ,
\end{equation}
for some parameter $t \in (0,1]$, determined by the smoothness of
realisations of $a(\bsx, \omega)$, and with a constant $C$
independent of $h$. This result differs from that of
\cite{TSGU13} through the use of interpolation to approximate the
random vector $\bsZ$.

Then, a substantial part of the paper is concerned with the convergence of
the QMC rules for \eqref{eq:unit_cube}. In particular, we consider
randomly shifted lattice rules,
\begin{equation}\label{eq:QMC}
  Q_{s,n}(\bsDelta, F)
  \, := \,
  \frac{1}{n} \sum_{k=1}^n F  \left( \Phi^{-1}_s (\bsv_k) \right)
\;, \quad \text{with} \quad   \bsv_k = {\rm frac} \left( \frac{k\,\bsz}{n} + \bsDelta \right),
\end{equation}
where $\bsz\in \mathbb{N}^s$ is some suitably chosen generating vector,
$\bsDelta \in [0,1]^s$ is a  uniformly distributed random shift, and
``${\rm frac}$'' denotes taking the fractional part of every
component in a vector. For the particular integrand $F(\bsy) :=
\cG(\tu_h(\cdot,\bsy))$, we provide in \S \ref{sec:qmc} upper bounds on
the error of approximating the integral \eqref{eq:int0} by \eqref{eq:QMC}.
In particular, in Theorem \ref{thm:QMC}, we give sufficient conditions on the
matrix $B$ appearing in \eqref{eq:Rfactor} for the
root-mean-square error to satisfy an estimate of the form:
\begin{equation}
\label{def:error}
\sqrt{\bbE_\bsDelta
\left[
|\, I_s(F) - Q_{s,n}(\bsDelta, F) |^2
\right]}  \ \leq \ \   C_\delta\,  n^{-(1-\delta)}\ ,
\end{equation}
for arbitrary $\delta > 0$. Here, $\bbE_\bsDelta$ denotes expectation with
respect to the random shift~$\bsDelta$. Moreover, we also provide
conditions under which the constant $C_\delta$ in \eqref{def:error} is
independent of $s$. Our proof of Theorem~\ref{thm:QMC} differs from the
corresponding result in~\cite{GKNSSS:15} because of the use of multilinear
interpolation of the random field in space, and because of the different
meaning of the parameters $y_j$. The problem is no longer about truncating
an infinite Karhunen--Lo\`eve expansion, but rather of dealing with a
sequence of matrix problems with ever increasing size $s$.

Finally, combining \eqref{eq:functionals} and \eqref{def:error}, the
overall error estimate is
\begin{equation*}
\sqrt{ \bbE_\bsDelta
\left[
|\,\bbE[\cG(u)] - Q_{s,n}(\bsDelta, F)|^2
\right]}  \,\le\, \sqrt{2}\, {C} \, h^{2t}  +  \sqrt{2}\,C_\delta\, n^{-(1-\delta)} .
\end{equation*}
Although the algorithm in \cite{GrKuNuScSl:11} applies to both linear and
nonlinear functionals, our theory at present is restricted to the linear
case. In the numerical experiments, we will use the average of $q$
different random shifts as our final QMC estimator, bringing the total
amount of integrand evaluations (i.e., PDE solves) to $N=q\,n$. We compare
with a classical Monte Carlo (MC) method for which we use $N$
i.i.d.\ random samples $\bsw_k \sim U[0,1)^s$,  i.e.,
\begin{equation}\label{eq:MC}
  Q^{\textrm{MC}}_{s,N}(F)
  \, := \,
  \frac{1}{N} \sum_{k=1}^N F  \left( \Phi^{-1}_s (\bsw_k) \right)
\;, \quad \text{with} \quad   \bsw_k \sim U[0,1)^s .
\end{equation}

The layout of the paper is as follows. The PDE with random coefficient and
its FE approximation with quadrature are described in
\S\ref{subsec:Model}. The estimate \eqref{eq:functionals} is proved in
\S\ref{subsec:Error}. The QMC theory is given in
\S\ref{sec:qmc}. In particular, one of the key results proved in
\S\ref{subsec:errorest} is the upper bound~\eqref{def:error}.
A sufficient condition on $B$ for this result in the case of circulant
embedding is identified in \S\ref{sec:expect}. The circulant embedding
algorithm is summarised briefly in \S\ref{sec:expect} and we refer to
\cite{paper1} for its theoretical analysis.  Numerical experiments are
given in \S\ref{sec:Numerical}, illustrating the performance of the
algorithm on PDE problems on an irregular domain with corners and holes
in two space dimensions, as well as on the unit cube in three
dimensions.

% -----------------------------------------------------------------------------
% -----------------------------------------------------------------------------
% -----------------------------------------------------------------------------
% -----------------------------------------------------------------------------
% -----------------------------------------------------------------------------
% -----------------------------------------------------------------------------

\section{Finite element implementation and analysis}\label{sec:flow}

In \S\ref{subsec:Model}, we first give the algorithmic details of our
practical finite element method, before proving error estimates for this
method in \S\ref{subsec:Error}, in particular Theorem~\ref{thm:H1error}
and Corollary~\ref{cor:functionals}.

\subsection{Model formulation and implementation}  \label{subsec:Model}

We start with  \eqref{eq:diffeq} written pathwise in weak form: seek
$u(\cdot,\omega) \in V := H^1_0(D)$ such that
\begin{equation}\label{eq:weak_form}
\mathscr{A}(\omega; u(\cdot,\omega), v) \ = \   \langle f, v\rangle\  \quad \text{for all} \ v
\in V\  \text{and for almost all} \ \omega \in \Omega\ ,
\end{equation}
where
$$
\scA(\omega; w,v) \,:=\,
\int_D a(\bsx, \omega)\, \nabla w(\bsx)\cdot \nabla v(\bsx)\,\rd\bsx\ , \quad
w, v \in V  ,
$$
and $a(\bsx, \omega)$ is given by \eqref{eq:field} --
\eqref{eq:defrho1}. The norm in $V$ is
$\|v\|_V := \|\nabla v\|_{L^2(D)}$. For simplicity we assume that $ f \in L^2(D)$,
  so that $\langle f,v\rangle$ reduces to the
$L^2(D)$ inner product in
\eqref{eq:weak_form}. In general, it denotes the duality pairing
between $V$ and its dual space $V' := H^{-1}(D)$.

To discretise \eqref{eq:weak_form}  in the physical domain $D$,  let
$\{\mathcal{T}_h\}_{h>0}$ denote a family of conforming, simplicial meshes
on $D$, parametrised by the maximum mesh diameter $h := \max_{\tau \in
\mathcal{T}_h} \diam(\tau)$ with $\diam(\tau) := \max_{\bsx,\bsx' \in
\tau} \|\bsx - \bsx'\|_2$\,. On this mesh, we let $V_h\subset V$
denote the usual finite element space of continuous piecewise linear
functions that vanish on $\partial D$. We assume that
$\text{dim}(V_h) = \mathcal{O}(h^{-d})$. This
includes many locally refined mesh families, including anisotropic
refinement in 3D (e.g., \cite{Apel:1999}, \cite{BaNiZi:07}). Since any
function in $V_h$ has a piecewise constant gradient, we have
\begin{equation}\label{eq:AonVh}
\scA(\omega; w_h,v_h) \,=\,
\sum_{\tau \in \cT_h} a_\tau (\omega)\, (\nabla w_h(\bsx)\cdot \nabla v_h(\bsx)) \bigr\vert_\tau  \quad
\mbox{for all } v_h,  w_h \in V_h\ ,
\end{equation}
where
\begin{equation}\label{eq:exactint}
  a_\tau(\omega) \ := \
  \int_\tau a(\bsx, \omega) \,\rd \bsx, \quad \tau\in \mathcal{T}_h\ .
\end{equation}

We approximate the required integrals \eqref{eq:exactint} using a
two-stage interpolation/quadrature process as follows. Recall the uniform
grid on the cube containing $D$, with points $\bsx_j$ for $j = 1, \ldots,
M$, and grid spacing $h_0\sim h$, defined in \S\ref{sec:Intro}. Let $\bsx
\in \overline{D}$ and let $\{\bst_{i,\bsx}\}_{i=1}^{2^d} \subset
\{\bsx_1,\ldots,\bsx_M\}$ be the vertices of the surrounding grid cell
labelled in arbitrary order. Since multilinear interpolation is done by
repeatedly applying linear interpolation in each coordinate direction, we
can write the interpolated value of $g$ at $\bsx$ as a convex combination
of the surrounding vertex values $\{\bst_{i,\bsx}\}_{i=1}^{2^d}$, i.e.,
\begin{equation}\label{eq:multilin}
  \cI_{h_0}(g;\{\bsx_j\}_{j=1}^M)(\bsx)
  =
  \sum_{i=1}^{2^d} w_{i,\bsx} \, g(\bst_{i,\bsx})
  ,\qquad
  \text{with}\quad
  \sum_{i=1}^{2^d} w_{i,\bsx} = 1
  \;\text{ and }\;
  0 \le w_{i,\bsx} \le 1.
\end{equation}
The operator $\cI_{h_0}:C(\overline{D})\to C(\overline{D})$ is linear and
satisfies $\cI_{h_0}(g;\{\bsx_j\}_{j=1}^M)(\bsx_i) = g(\bsx_i)$, for every
point $\bsx_i$ of the uniform grid.

Let us further define an $r$-point quadrature rule on each element $\tau$,
which is exact for constant functions, has positive weights $\mu_{\tau,k}
\ge 0$ and only uses quadrature points $\bsx_{\tau,k} \in \tau$, i.e.,
\begin{equation} \label{eq:quad}
  Q_\tau(g) :=
  \sum_{k=1}^r \mu_{{\tau},{k}} \, g(\bsx_{{\tau},{k}}),
  \quad \text{with} \ \
  \sum_{k=1}^r \mu_{{\tau},{k}} = \vert \tau \vert \ \text{ and } \ \mu_{\tau,k} \ge 0.
\end{equation}
Here, $|\tau|$ denotes the volume of $\tau$. The quadrature points
$\bsx_{{\tau},{k}}$ in \eqref{eq:quad} are unrelated to the uniform grid
points $\bsx_j$ in general. Examples of rules satisfying \eqref{eq:quad}
are the centroid, nodal or face-centroid rules.

Using the rule \eqref{eq:quad} to approximate all the integrals
\eqref{eq:exactint} would require evaluating $a(\cdot, \omega)$ at the
union of all the  (in general irregularly distributed) quadrature points
$\{ \bsx_{{\tau},{k}} \} $, which could be costly. We avoid that and
compute the field only at the points of the uniform grid. We then
interpolate these values using
$\cI_{h_0}(a(\cdot,\omega);\{\bsx_j\}_{j=1}^M)$ and then approximate
$a_\tau(\omega)$ using \eqref{eq:quad}. In summary, we approximate the
bilinear form in \eqref{eq:AonVh} by
\begin{equation}\label{eq:defAh}
\scA_h(\omega;w_h,v_h)
\,:=\,
\sum_{\tau\in\mathcal{T}_h}  \widehat{a}_\tau(\omega)  \,
(\nabla w_h \cdot \nabla v_h) \bigr\vert_\tau \ ,
\end{equation}
where
\begin{equation} \label{eq:defatau}
\widehat{a}_{\tau}(\omega)
 \,:=\,
Q_\tau\bigl(\cI_{h_0}(a(\cdot,\omega);\{\bsx_j\}_{j=1}^M)\bigr)
= (Q_\tau \circ \cI_{h_0})(a(\cdot,\omega))
.
\end{equation}

\begin{proposition} \label{prop1} For all $\tau \in \cT_h$, there is a sparse positive vector
$\bsp_\tau =(p_{\tau,1},\cdots,p_{\tau,M}) \in \mathbb{R}^M$ such that
$$
\widehat{a}_\tau(\omega)
\ = \
\sum_{j=1}^M p_{\tau,j} \, a(\bsx_j,\omega)
, \quad \text{and} \quad
  \ha_\tau(\omega)  \ \geq \ \vert \tau \vert\, {a}_{\min,M}(\omega)\,, \quad
  \text{for all} \ \ \tau \in
\cT_h\,,
$$
where $a_{\min,M}(\omega) \,:=\, \min_{1\le j\le M} a(\bsx_j,\omega)$.
\end{proposition}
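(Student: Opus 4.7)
The plan is to unwind the definition of $\widehat{a}_\tau(\omega) = (Q_\tau \circ \cI_{h_0})(a(\cdot,\omega))$ by substituting the formulas \eqref{eq:quad} and \eqref{eq:multilin} one after the other, then reorganise the resulting double sum into a single sum indexed by the uniform grid points $\bsx_j$.

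Concretely, I would first write
\[
\widehat{a}_\tau(\omega) \,=\, \sum_{k=1}^r \mu_{\tau,k}\,\cI_{h_0}(a(\cdot,\omega);\{\bsx_j\}_{j=1}^M)(\bsx_{\tau,k}) \,=\, \sum_{k=1}^r \mu_{\tau,k} \sum_{i=1}^{2^d} w_{i,\bsx_{\tau,k}}\, a(\bst_{i,\bsx_{\tau,k}},\omega),
\]
observing that each $\bst_{i,\bsx_{\tau,k}}$ coincides with some $\bsx_{j}$, $1\le j\le M$. Then I would define $p_{\tau,j}$ by gathering all contributions $\mu_{\tau,k}\,w_{i,\bsx_{\tau,k}}$ for which $\bst_{i,\bsx_{\tau,k}}=\bsx_j$ (and setting $p_{\tau,j}=0$ otherwise). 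Non-negativity of $p_{\tau,j}$ is then immediate from $\mu_{\tau,k}\ge 0$ and $0\le w_{i,\bsx_{\tau,k}}\le 1$, and the sparsity claim follows because for each quadrature point $\bsx_{\tau,k}$ only the (at most) $2^d$ vertices of its surrounding grid cell contribute, giving at most $2^d r$ nonzero entries of $\bsp_\tau$, independently of $M$.

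For the pointwise lower bound, I would bound $a(\bsx_j,\omega)\ge a_{\min,M}(\omega)$ inside the sum to get
\[
\widehat{a}_\tau(\omega) \,\ge\, a_{\min,M}(\omega)\sum_{j=1}^M p_{\tau,j},
\]
and then evaluate $\sum_{j=1}^M p_{\tau,j}$ by undoing the regrouping: the inner interpolation weights sum to one by the partition-of-unity property in \eqref{eq:multilin}, and then the quadrature weights sum to $|\tau|$ by \eqref{eq:quad}, yielding
\[
\sum_{j=1}^M p_{\tau,j} \,=\, \sum_{k=1}^r \mu_{\tau,k}\sum_{i=1}^{2^d} w_{i,\bsx_{\tau,k}} \,=\, \sum_{k=1}^r \mu_{\tau,k} \,=\, |\tau|.
\]
Combining these two identities gives the required $\widehat{a}_\tau(\omega)\ge |\tau|\,a_{\min,M}(\omega)$.

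The proposition is essentially a bookkeeping exercise, so I do not anticipate any real obstacle; the only thing to be careful about is the combinatorial identification $\bst_{i,\bsx_{\tau,k}}\leftrightarrow \bsx_j$ when multiple $(k,i)$-pairs land on the same grid node (which is what makes $\bsp_\tau$ well-defined as a vector of length $M$ rather than a sum over the index set $\{(k,i)\}$). Equivalently, one can note that $Q_\tau\circ \cI_{h_0}:C(\overline{D})\to\bR$ is a positive linear functional of norm $|\tau|$ supported on $\{\bsx_1,\dots,\bsx_M\}$, so by the Riesz representation for finitely supported functionals it must take the form $g\mapsto \sum_{j=1}^M p_{\tau,j} g(\bsx_j)$ with $p_{\tau,j}\ge 0$ and $\sum_j p_{\tau,j} = (Q_\tau\circ \cI_{h_0})(1) = |\tau|$; this abstract viewpoint gives the claim without bookkeeping.
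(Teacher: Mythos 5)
Your proposal is correct and follows essentially the same route as the paper: expand $\widehat{a}_\tau$ via \eqref{eq:defatau}, \eqref{eq:multilin} and \eqref{eq:quad} into the double sum $\sum_{k}\sum_{i}\mu_{\tau,k}w_{i,\bsx_{\tau,k}}a(\bst_{i,\bsx_{\tau,k}},\omega)$, regroup by grid node to get $\bsp_\tau$, and use non-negativity of the weights together with $\sum_{k}\mu_{\tau,k}\sum_i w_{i,\bsx_{\tau,k}}=|\tau|$ for the lower bound. The paper's proof is just a terser version of the same bookkeeping; your extra care about coincident $(k,i)$-pairs and the optional functional-analytic rephrasing are fine but not needed.
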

\begin{proof}
It follows from~\eqref{eq:defatau} together with~\eqref{eq:multilin}
and~\eqref{eq:quad} that
\begin{equation}\label{eq:approxhata}
 \widehat{a}_\tau(\omega) \,= \,
 \sum_{k=1}^r \sum_{i=1}^{2^d} \mu_{\tau,k} \,  w_{i,\bsx_{\tau,k}}
 a(\bst_{i,\bsx_{\tau,k}},\omega).
\end{equation}
The second result then follows from the definition of $a_{\min,M}(\omega)$
and the fact that the coefficients $\mu_{\tau,k}$ and
$w_{i,\bsx_{\tau,k}}$ are all positive and their sum is $|\tau|$.
\end{proof}

Extending the notational convention \eqref{eq:convention}, we may thus write
our discrete finite element method for \eqref{eq:weak_form} as the problem of
finding $u_h(\cdot, \bsy)$ which satisfies
\begin{equation}\label{fepar}
\scA_h(\bsy;u_h(\cdot,\bsy),v_h)
\ = \
\langle f,v_h\rangle,
\qquad \mathrm{for}\
\mathrm{all} \quad v_h \in V_h\,  \quad \bsy \in \mathbb{R}^s,
\end{equation}
where $\scA_h(\bsy;w_h,v_h)$ is identified with $\scA_h(\omega;w_h,v_h)$.

\subsection{Finite element error analysis}
\label{subsec:Error}

Let us first define some relevant function spaces. Let
$C^1(\overline{D})$ denote the space of continuously
differentiable functions on $D$ with seminorm $\vert \phi
\vert_{C^1(\overline{D})} := \sup_{\bsx \in
\overline{D}} |\nabla \phi(\bsx)|$. For $\beta \in
(0,1)$, let  $C^{\beta}({\overline{D}})$ denote the space of
H\"{o}lder continuous functions  on $D$ with exponent
$\beta$ and let
$\vert \phi \vert_{C^\beta(\overline{D})} := \sup_{\bsx_1,\bsx_2 \in
\overline{D}\,:\,\bsx_1 \not= \bsx_2}
 |\phi(\bsx_1) - \phi(\bsx_2)|/\|\bsx_1 - \bsx_2\|_2^\beta <\infty$
denote the \emph{H\"older coefficient} which is, in fact, a seminorm.
Also let $L^p(\Omega,X)$ denote the space of
all random fields in a Banach space $X$ with bounded $p$th moments over
$\Omega$.

We assume throughout that  $Z(\cdot,\omega) \in C^\beta(\overline{D})$,
for some $\beta \in (0,1]$, $\IP$-almost surely. Since $Z(\bsx,\omega)$ is
Gaussian, it follows from Fernique's Theorem that
$\|a\|_{L^p(\Omega,C^\beta(\overline{D}))}$ is finite, for all $p \in
[1,\infty)$ (see \cite{ChScTe:11}). Moreover, this implies that
$\Vert a_{\max} \Vert_{L^p(\Omega)} < \infty$  and
$\|1/a_{\min}\|_{L^p(\Omega)} < \infty$, for all $p \in [1,\infty)$, where
$a_{\min}(\omega) \,:=\, \min_{\bsx \in \overline{D}} a(\bsx,\omega)$ and
$a_{\max}(\omega) \,:=\, \max_{\bsx \in \overline{D}} a(\bsx,\omega)$.

Models where realisations of $a(\bsx,\omega)$ lack smoothness are often of
interest in applications, and a  class of coefficients of particular
significance is given by the \Mat class with smoothness parameter $\nu
\geq  1/2$, described in detail in Example~\ref{ex:Mat}. For $\nu \le 1$,
realisations are in $C^{\beta}({\overline{D}})$ $\IP$-almost surely, for
all $0<\beta<\nu$ (see, e.g., \cite{LPS14,GKNSSS:15}).

There are two factors that limit the convergence rate of the finite
element error: (i) the regularity of the coefficient field
$a(\cdot,\omega)$ and (ii) the shape of the domain $D$. Since
$a(\cdot,\omega) \in C^\beta(\overline{D})$, then (if $\partial D$ is
smooth enough),  we have $u(\cdot, \omega) \in H^{1+t}(D)$ for all $0\leq
t \leq \beta$.  Here, when $\beta < 1$,  the loss of $H^2$ regularity is
global and the resulting reduction in the finite element convergence rate
cannot be corrected by local mesh refinement.
On the other hand, the influence of corner or edge singularities can
typically be eliminated by suitable local mesh refinement near $\partial
D$.

Using the notation in \cite[Def.~2.1]{TSGU13}, let
$\lambda_\Delta(D)$ be the order of the strongest singularity
of the Dirichlet-Laplacian on $D$.
Then $u(\cdot,\omega) \in H^{1+t}(D)$, for all $t \le \lambda_\Delta(D)$
and $t < \beta$, and $\|u\|_{L_p(\Omega,H^{1+t}(D))}$ is bounded for all
$p \in [1,\infty)$ (see \cite[Lem.~5.2]{TSGU13}).
When $\lambda_\Delta(D) \ge \beta$ uniform mesh refinement leads to a best
approximation error that satisfies
\begin{equation}
\label{eq_bestapprox}
\inf_{v_h \in V_h} \|u(\cdot,\omega)-v_h\|_{V} \le C_{{\rm FE}}(\omega) h^t
\,, \quad \text{for all} \ \ t < \beta\,,
\end{equation}
with $C_{{\rm FE}}(\omega) \sim \|u(\cdot,\omega) \|_{H^{1+t}(D)}$.
When $\lambda_\Delta(D) < \beta$, \eqref{eq_bestapprox} cannot be achieved
by uniform refinement. However, it can be recovered by a suitable local refinement.
For example, consider the 2D case where $D$ is smooth except for a single
reentrant corner with interior angle $\theta > \pi$ and where $W \subset D$
is a local neighbourhood of this  corner. Then $\lambda_\Delta(D) =
\pi/\theta$ and $u(\cdot,\omega) \in H^{1+t}(D \backslash W)$, for all $t
< \beta$, but $u(\cdot,\omega) \not\in H^{1+t}(W)$, for $\pi/\theta < t <
\beta$. However, by considering the best approximation error over $W$ and over
$D\backslash W$ separately, we see that it suffices to grade the
meshes such that the mesh size is
$\mathcal{O}(h^{\beta \theta/\pi})$ near the reentrant corner and
$\mathcal{O}(h)$
away from it. This is because
\begin{align*}
\inf_{v_h \in V_h} \|u(\cdot,\omega)-v_h\|_{V} &\le C_1
\|u(\cdot,\omega)\|_{H^{1+t}(D \backslash W)} h^t + C_2
\|u(\cdot,\omega)\|_{H^{1+\lambda_\Delta(D)}(D)} (h^{\beta\theta/\pi})^{\lambda_\Delta(D)}\nonumber
\end{align*}
for all $0<t<\beta$. Such a mesh grading can often be achieved
while retaining the desired complexity estimate $\text{dim}(V_h) \le C
h^{-2}$ (e.g., \cite{ScWa:79}).

Thus, using similar techniques to those in the proof of
\cite[Lem.~5.2]{TSGU13} it can be shown that \eqref{eq_bestapprox}  holds
with $C_{{\rm FE}}(\omega) \sim \|u(\cdot,\omega)\|_{H^{1+t}(D \backslash
W)} + \|u(\cdot,\omega)\|_{H^{1+\lambda_\Delta(D)}(D)}$, for all $t <
\beta$.\linebreak The case of multiple reentrant corners can be treated in
an identical fashion. Analogous but more complicated (anisotropic)refinement is needed in 3D, especially in the presence of
edge-singularities (e.g., \cite{Apel:1999}, \cite{BaNiZi:07}). In
practice, such local refinements can be constructed adaptively. The
important observation here is that the locally refined mesh needs to be
constructed only once for $\exp(\overline{Z}(\cdot))$ (or for one sample
of $a$), since the boundary singularities will be the same for all
samples.

We start our analysis by estimating the error  in approximating $a_\tau(\omega)$ by
$\widehat{a}_\tau(\omega)$.

\begin{lemma}
\label{lem:quad} Assume $a(\cdot,\omega) \in C^\beta(\overline{D})$ for
some $\beta \in (0,1]$. Furthermore,  for $h_0\sim h$ and for $\tau
\subseteq D$ with $\diam(\tau) \le h$, let $\cI_{h_0}$ and $Q_\tau$ be as
defined in \eqref{eq:multilin} and \eqref{eq:quad}, respectively. Then,
with $a_\tau(\omega)$ and $\widehat{a}_\tau(\omega)$ given by
\eqref{eq:exactint} and \eqref{eq:defatau},
\[
\vert a_\tau(\omega) - \widehat{a}_\tau(\omega) \vert
\,\le\,
|\tau| \, h^\beta \, \gamma^\beta \, |a(\cdot,\omega)|_{C^\beta(\overline{D})}
,
\]
with $\gamma = 1 + \sqrt{d}\,(h_0/h)$. If the quadrature points all lie on
the regular grid and we do not need interpolation, we may take $\gamma =
1$.
\end{lemma}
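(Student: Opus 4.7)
The plan is to combine the two ingredients in $\widehat a_\tau$ (positive quadrature weights that sum to $|\tau|$, and convex multilinear interpolation weights that sum to $1$) into a single convex combination, so that both $a_\tau/|\tau|$ and $\widehat a_\tau/|\tau|$ are represented as averages of $a(\cdot,\omega)$ against a probability measure. The Hölder bound then only has to be applied once.

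Concretely, starting from \eqref{eq:approxhata} I would write
\[
\widehat a_\tau(\omega) \,=\, |\tau|\,\widetilde a, \qquad
\widetilde a \,:=\, \sum_{k=1}^r \sum_{i=1}^{2^d} \frac{\mu_{\tau,k}\,w_{i,\bsx_{\tau,k}}}{|\tau|}\,a(\bst_{i,\bsx_{\tau,k}},\omega),
\]
and $a_\tau(\omega)=|\tau|\,\bar a$ with $\bar a:=|\tau|^{-1}\!\int_\tau a(\bsx,\omega)\,\rd\bsx$. By \eqref{eq:multilin} and \eqref{eq:quad} the coefficients $\mu_{\tau,k} w_{i,\bsx_{\tau,k}}/|\tau|$ are nonnegative and sum to $1$, so $\widetilde a$ is a convex combination of the values $a(\bst_{i,\bsx_{\tau,k}},\omega)$.

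Next I would estimate $\bar a-\widetilde a$ in a single step. Writing
\[
\bar a - \widetilde a \,=\, \frac{1}{|\tau|}\int_\tau \sum_{k,i}\frac{\mu_{\tau,k}\,w_{i,\bsx_{\tau,k}}}{|\tau|}\bigl[a(\bsx,\omega) - a(\bst_{i,\bsx_{\tau,k}},\omega)\bigr]\,\rd\bsx,
\]
we only need a pointwise bound on each difference. For $\bsx\in\tau$ and any quadrature point $\bsx_{\tau,k}\in\tau$ we have $\|\bsx-\bsx_{\tau,k}\|_2\le\diam(\tau)\le h$, and the grid vertex $\bst_{i,\bsx_{\tau,k}}$ sits in the cell of side $h_0$ surrounding $\bsx_{\tau,k}$, so $\|\bsx_{\tau,k}-\bst_{i,\bsx_{\tau,k}}\|_2\le\sqrt{d}\,h_0$. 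The triangle inequality gives $\|\bsx-\bst_{i,\bsx_{\tau,k}}\|_2\le h+\sqrt{d}\,h_0=\gamma h$, whence Hölder continuity yields $|a(\bsx,\omega)-a(\bst_{i,\bsx_{\tau,k}},\omega)|\le |a(\cdot,\omega)|_{C^\beta(\overline D)}(\gamma h)^\beta$. Plugging this in, using the convex-combination property, and multiplying by $|\tau|$ gives exactly the stated bound.

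For the special case when quadrature points already lie on the regular grid, the interpolation operator is the identity at each $\bsx_{\tau,k}$, so we can take $\bst_{i,\bsx_{\tau,k}}=\bsx_{\tau,k}$ and drop the $\sqrt{d}\,h_0$ contribution, recovering $\gamma=1$. The only subtle point in the argument is that the seemingly natural splitting $a_\tau-\widehat a_\tau=[a_\tau-|\tau|a(\bsx^*)]+[|\tau|a(\bsx^*)-\widehat a_\tau]$ through a reference value $a(\bsx^*)$ inside $\tau$ would lose an unwanted factor $(1+\gamma^\beta)$; performing the convex-combination trick above yields the sharp factor $\gamma^\beta$ without any numerical constant, which is why the single-integral representation is the right formulation to aim for.
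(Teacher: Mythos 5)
Your proof is correct and is essentially the paper's argument: both start from \eqref{eq:approxhata}, exploit that the coefficients $\mu_{\tau,k}w_{i,\bsx_{\tau,k}}$ are nonnegative and sum to $|\tau|$, and bound each difference via the triangle inequality $\|\cdot\|_2\le h+\sqrt{d}\,h_0$ with a single application of the H\"older seminorm. The only cosmetic difference is that the paper collapses $a_\tau=|\tau|\,a(\bsx_\tau^*,\omega)$ by the integral mean value theorem rather than keeping the integral average, so the lossy splitting you warn against is not used there either and both routes give the sharp factor $\gamma^\beta$.
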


\begin{proof}
Using the fact that $a(\cdot,\omega)$ is continuous, the integral mean
value theorem asserts the existence of an $\bsx_\tau^* \in \tau$ such that
\begin{align*}
   a_\tau(\omega)
  \,=\,
  |\tau| \, a(\bsx_\tau^*,\omega)
  \,=\,
  \sum_{k=1}^r \sum_{i=1}^{2^d} \mu_{\tau,k} \, w_{i,\bsx_{\tau,k}} \, a(\bsx_\tau^*,\omega)
  ,
\end{align*}
where we used~\eqref{eq:multilin} and \eqref{eq:quad}. Then it follows
from \eqref{eq:approxhata} that
\begin{align*}
  \left| a_\tau(\omega) - \widehat{a}_\tau(\omega)\right|
  &\,=\,
  \Bigg| \sum_{k=1}^r \sum_{i=1}^{2^d} \mu_{\tau,k} \, w_{i,\bsx_{\tau,k}}
  \left( a(\bsx_\tau^*,\omega) - a(\bst_{i,\bsx_{\tau,k}},\omega)\right)\Bigg|\\
  &\,\le\,
  \sum_{k=1}^r \sum_{i=1}^{2^d} \mu_{\tau,k} \, w_{i,\bsx_{\tau,k}}
  \left| a(\bsx_\tau^*,\omega) - a(\bst_{i,\bsx_{\tau,k}},\omega)\right|
  \\
  &\,\le\,
  \sum_{k=1}^r \sum_{i=1}^{2^d} \mu_{\tau,k} \, w_{i,\bsx_{\tau,k}}\,
  \|\bsx_\tau^*-\bst_{i,\bsx_{\tau,k}}\|_2^\beta\,
  |a(\cdot,\omega)|_{C^\beta(\overline{D})}\\
  &\,\le\,
  \sum_{k=1}^r \sum_{i=1}^{2^d} \mu_{\tau,k} \, w_{i,\bsx_{\tau,k}}
  \left(\|\bsx_\tau^*- \bsx_{\tau,k}\|_2 + \|\bsx_{\tau,k} - \bst_{i,\bsx_{\tau,k}}\|_2\right)^\beta
  |a(\cdot,\omega)|_{C^\beta(\overline{D})}\\
  &\,\le\, |\tau| \, (h+\sqrt{d}h_0)^\beta \, |a(\cdot,\omega)|_{C^\beta(\overline{D})}\,.
\end{align*}
In the last step we used the fact that the distance between a point in a
cell of the regular grid and a vertex of that cell is at most
$\sqrt{d}\,h_0$. If the quadrature points all lie on the regular grid then
the second term can be omitted. This completes the proof.
\end{proof}

\begin{theorem}
\label{thm:H1error} Suppose that $Z(\cdot,\omega) \in
C^\beta(\overline{D})$ for some $\beta \in (0,1)$ and suppose that there
exists a family $\{\mathcal{T}_h\}_{h>0}$ of conforming, simplicial meshes
on $D$ such that \eqref{eq_bestapprox} holds with $\text{dim}(V_h) \le C
h^{-d}$. Let $\cI_{h_0}$ and $Q_\tau$ be defined in
\eqref{eq:multilin} and \eqref{eq:quad}, respectively. Then, we have
$\mathbb{P}$-almost surely
$$
\| u(\cdot,\omega) - u_h(\cdot,\omega)\|_{V} \ \le \
C_{{\cI Q}}(\omega) \, h^t\,, \quad \text{for all} \ \ t < \beta,
$$
with $C_{{\cI Q}}$ a positive random variable that satisfies
$\mathbb{E}[C_{{\cI Q}}^p] < \infty$, for all $p \in [1,\infty)$.

If $Z(\cdot,\omega) \in C^1(\overline{D})$ and \eqref{eq_bestapprox} also
holds for $t=1$, then $$\| u(\cdot,\omega) - u_h(\cdot,\omega)\|_{V} \
\le \ C_{{\cI Q}}(\omega) \, h.$$
\end{theorem}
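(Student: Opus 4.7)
The plan is to apply Strang's first lemma pathwise for $\mathbb{P}$-almost every $\omega$, since the two key ingredients are already in place: Proposition~\ref{prop1} controls $\widehat{a}_\tau$ from above and below by the pointwise extrema of $a$, while Lemma~\ref{lem:quad} quantifies $|a_\tau - \widehat{a}_\tau|$ at rate $h^\beta$ with constant linear in $|a(\cdot,\omega)|_{C^\beta(\overline{D})}$.

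First, I would record continuity and coercivity constants for $\scA_h$ on $V_h$. Proposition~\ref{prop1} together with $\sum_j p_{\tau,j} = |\tau|$ gives pointwise $\scA_h(\omega; v_h, v_h) \ge a_{\min}(\omega) \|v_h\|_V^2$ and $|\scA_h(\omega; w_h, v_h)| \le a_{\max}(\omega) \|w_h\|_V \|v_h\|_V$, and the analogous bounds hold verbatim for $\scA$. Strang's first lemma would then yield, for every $v_h \in V_h$,
\begin{equation*}
\|u - u_h\|_V \,\le\, \frac{a_{\max}(\omega)}{a_{\min}(\omega)}\,\|u - v_h\|_V + \frac{1}{a_{\min}(\omega)} \sup_{0 \neq w_h \in V_h} \frac{|\scA(\omega; v_h, w_h) - \scA_h(\omega; v_h, w_h)|}{\|w_h\|_V}.
\end{equation*}
Next, I would bound the consistency term by summing Lemma~\ref{lem:quad} elementwise (the gradients are piecewise constant, so the sum factors out) and applying Cauchy--Schwarz; since $h_0 \sim h$ forces $\gamma = O(1)$, this produces $C\, h^\beta\, |a(\cdot,\omega)|_{C^\beta(\overline{D})} \|v_h\|_V \|w_h\|_V$. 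I would then choose $v_h$ to realise the best-approximation bound \eqref{eq_bestapprox}, use the triangle inequality to bound $\|v_h\|_V$ by $\|u\|_V + C\, h^t$, and observe that the consistency contribution is $O(h^\beta)$, which is $o(h^t)$ for any $t < \beta$ and so absorbed.

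Collecting these estimates gives $\|u - u_h\|_V \le C_{\cI Q}(\omega)\, h^t$ with
\begin{equation*}
C_{\cI Q}(\omega) \,\lesssim\, \frac{a_{\max}(\omega)}{a_{\min}(\omega)} \bigl(C_{\mathrm{FE}}(\omega) + |a(\cdot,\omega)|_{C^\beta(\overline{D})}\,\|u(\cdot,\omega)\|_V\bigr).
\end{equation*}
Each factor on the right-hand side has finite moments of every order: $\|a_{\max}\|_{L^p(\Omega)}$, $\|1/a_{\min}\|_{L^p(\Omega)}$, and $\|a\|_{L^p(\Omega,C^\beta(\overline{D}))}$ by the Fernique argument recalled above; $C_{\mathrm{FE}}$ by \cite[Lem.~5.2]{TSGU13} combined with the local mesh grading discussed before the theorem; and $\|u\|_V \le \|f\|_{V'}/a_{\min}(\omega)$ by Lax--Milgram. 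H\"older's inequality then delivers $\mathbb{E}[C_{\cI Q}^p] < \infty$ for every $p \in [1,\infty)$. The $C^1$ assertion follows by running the identical argument with $\beta = 1$, the Lipschitz seminorm $|\cdot|_{C^1(\overline{D})}$, and $t = 1$ in \eqref{eq_bestapprox}.

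The main obstacle is not any individual step but the bookkeeping of the $\omega$-dependence: the Strang constant must be written as a product of quantities each \emph{a priori} known to lie in every $L^p(\Omega)$, so that a single pass through H\"older closes the argument. A secondary subtlety is ensuring that $C_{\mathrm{FE}}(\omega)$, when produced by local mesh refinement rather than by a global $H^{1+t}$ bound, still satisfies uniform moment estimates---which is precisely why the quoted \cite[Lem.~5.2]{TSGU13} and the decomposition $D = (D\backslash W) \cup W$ discussed before the theorem are needed.
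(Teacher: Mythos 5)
Your proposal follows essentially the same route as the paper's proof: a pathwise application of Strang's First Lemma, with the consistency term bounded by summing Lemma~\ref{lem:quad} over elements (using piecewise-constant gradients and Cauchy--Schwarz), the best-approximation bound \eqref{eq_bestapprox} for the first term, the bound $\|v_h\|_V \le \|u-v_h\|_V + \|f\|_{V'}/a_{\min}(\omega)$, and Fernique's theorem plus H\"older to control all moments of the resulting random constant. The argument is correct and matches the paper's in all essentials.
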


\begin{proof}
The proof follows that of \cite[Prop.~3.13]{ChScTe:11}. First, using
Lemma~\ref{lem:quad} and the fact that $\nabla v_{h,\tau} := \nabla v_h
\vert_\tau$ is constant, for all piecewise linear finite element
functions $v_h \in V_h$, as well as applying the Cauchy--Schwarz
inequality in the last step we obtain the estimate
\begin{eqnarray*}
 \vert \mathscr{A}(\omega; w_h, v_h) - \mathscr{A}_h(\omega; w_h, v_h) \vert
  & =  & \sum_{\tau \in \cT_h}
\left\vert  a_\tau(\omega)  -  \widehat{a}_{\tau} (\omega)\right\vert \,
\big\vert (\nabla w_{h} \cdot \nabla
             v_{h})\vert_\tau \big\vert \\
  &   \le  &   h^\beta \, \gamma^\beta \, | a(\cdot, \omega)
         |_{C^\beta(\overline{D})}   \sum_{\tau \in \cT_h}  \vert
             \tau \vert  \, \big\vert \nabla w_{h,\tau} \cdot \nabla
             v_{h,\tau} \big\vert \\
&   \leq  &   h^\beta \, \gamma^\beta \, | a(\cdot, \omega)
          |_{C^\beta(\overline{D})} \,    \| v_h \|_{V}
            \| w_h \|_{V} \, .
\end{eqnarray*}
Now, using this bound in Strang's First Lemma
(cf.~\cite[Lem.~3.12]{ChScTe:11}), we can write
\begin{align} \label{eq_strang}
& \| u(\cdot,\omega)-u_h(\cdot,\omega)\|_{V} \nonumber\\
& \qquad \le \inf_{v_h \in V_h} \Bigg\{
\left( 1+\frac{a_{\mathrm{max}}(\omega)}{a_{\mathrm{min}}(\omega)}\right) \,
\|u(\cdot,\omega)-v_h\|_{V} + h^\beta \gamma^\beta\,
\frac{|a(\omega)|_{\mathcal{C}^{\beta}(\overline{D})}}{a_{\mathrm{min}}(\omega)} \,
\|v_h\|_{V} \Bigg\}\, .
\end{align}
Since
\[
\|v_h\|_{V} \le \|u(\cdot,\omega)-v_h\|_{V} +
\|u(\cdot,\omega)\|_{V} \le \|u(\cdot,\omega)-v_h\|_{V}
+ \frac{\|f\|_{L^2(D)}}{a_{\mathrm{min}}(\omega)} \,  ,
\]
we can combine \eqref{eq_strang} with \eqref{eq_bestapprox} to
establish the result.

The fact that the constant $C_{{\cI Q}}(\omega)$ in the above bounds
has bounded moments of any (finite) order is a consequence of our
assumptions that $Z(\bsx,\omega)$ is Gaussian and that
$Z(\cdot,\omega) \in \mathcal{C}^{\beta}(\overline{D})$. As stated
  above, it can be proved as in
\cite{ChScTe:11} via Fernique's Theorem.
\end{proof}

An $\mathcal{O}(h^{2t})$ bound on the $L^2$-norm of the error follows via
the well-known Aubin--Nitsche trick (cf.~\cite[Cor.~3.10]{ChScTe:11}). We
omit this and finish the section with an error bound for linear
functionals $\mathcal{G}$ of $u$, which we have already stated in \eqref{eq:functionals}.
\begin{corollary} \label{cor:functionals} Let $\mathcal{G}$ be a
bounded linear functional on $L^2(D)$. Then, under the assumptions of
Theorem~\textup{\ref{thm:H1error}}, there exists a constant $C > 0$ independent
of $h$ and $u$ such that
\[
\mathbb{E}\big[\vert \mathcal{G}(u) - \mathcal{G}(u_h) \vert\big]
\le C h^{2t}, \quad \text{for all} \ \ t < \beta\,.
\]
For $\beta = 1$, we get
$\mathbb{E}\big[\vert \mathcal{G}(u) - \mathcal{G}(u_h) \vert\big] \le
C h^{2}$.
\end{corollary}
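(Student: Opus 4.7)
The plan is the Aubin--Nitsche duality trick combined with the Riesz representation of $\mathcal{G}$. Since $\mathcal{G}$ is bounded on $L^2(D)$, there exists $g \in L^2(D)$ with $\mathcal{G}(v) = (g,v)_{L^2(D)}$ and $\|g\|_{L^2(D)} = \|\mathcal{G}\|_{L^2(D)'}$, so pathwise Cauchy--Schwarz gives $|\mathcal{G}(u) - \mathcal{G}(u_h)| \le \|g\|_{L^2(D)} \|u-u_h\|_{L^2(D)}$. It therefore suffices to prove the pathwise $L^2$-error bound $\|u(\cdot,\omega) - u_h(\cdot,\omega)\|_{L^2(D)} \le C_{L^2}(\omega) h^{2t}$ with $C_{L^2}$ having finite moments of every order, and then take expectation.

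For the pathwise $L^2$ bound I would introduce the adjoint problem: for each $\omega$, find $z(\cdot,\omega) \in V$ with $\mathscr{A}(\omega; v, z) = (u-u_h, v)_{L^2(D)}$ for all $v \in V$, and its Galerkin approximation $\tilde z_h \in V_h$ defined by $\mathscr{A}_h(\omega; v_h, \tilde z_h) = (u-u_h, v_h)_{L^2(D)}$. Since $\mathscr{A}$ is symmetric, the dual problem has exactly the structure of the primal (with $f$ replaced by $u-u_h \in L^2(D)$), so Theorem~\ref{thm:H1error} applies verbatim to $z$ and yields $\|z - \tilde z_h\|_V \le \tilde C_{\cI Q}(\omega) h^t \|u-u_h\|_{L^2(D)}$ with moments of all orders. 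Taking $v = u-u_h$ in the continuous dual, adding and subtracting $\tilde z_h$, and using the near-Galerkin orthogonality $\mathscr{A}(\omega; u, \tilde z_h) = \langle f, \tilde z_h\rangle = \mathscr{A}_h(\omega; u_h, \tilde z_h)$ gives
$$
\|u-u_h\|_{L^2(D)}^2 \,=\, \mathscr{A}(\omega; u-u_h,\, z - \tilde z_h) + [\mathscr{A}_h - \mathscr{A}](\omega; u_h, \tilde z_h).
$$
The first right-hand term is immediately bounded by $a_{\max}(\omega) \|u-u_h\|_V \|z-\tilde z_h\|_V = O(h^{2t}\|u-u_h\|_{L^2(D)})$ by applying Theorem~\ref{thm:H1error} to both primal and dual.

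The main obstacle is the consistency term: a direct application of Lemma~\ref{lem:quad} only gives $O(h^\beta\|u_h\|_V\|\tilde z_h\|_V)$, which is too weak once $t > \beta/2$. The remedy is to decompose $u_h$ and $\tilde z_h$ as interpolant-plus-remainder, $u_h = u^I + (u_h - u^I)$ and $\tilde z_h = z^I + (\tilde z_h - z^I)$ with $u^I, z^I \in V_h$ nodal interpolants of the continuous $u, z$, and expand the bilinear consistency form into four pieces. Three of these pick up an extra $h^t$ factor from $\|u_h - u^I\|_V, \|\tilde z_h - z^I\|_V \lesssim h^t$ (triangle inequality plus best-approximation), yielding $O(h^{t+\beta}\|u-u_h\|_{L^2(D)})$, which absorbs into $O(h^{2t}\|u-u_h\|_{L^2(D)})$ since $\beta > t$; the remaining smooth-smooth piece $[\mathscr{A}_h - \mathscr{A}](u^I, z^I)$ is handled by a finer quadrature error analysis of the smooth factors along the lines of \cite[Cor.~3.10]{ChScTe:11}. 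Dividing through by $\|u-u_h\|_{L^2(D)}$ closes the pathwise $L^2$ bound. Expectation is finally taken via H\"older's inequality to separate $a_{\max}$, $1/a_{\min}$, $|a|_{C^\beta}$, and $\|u\|_{H^{1+t}}$, each enjoying moments of all orders by Fernique's theorem, producing the advertised $h$-independent constant $C$. The case $\beta = 1$ is identical with $t = 1$ throughout.
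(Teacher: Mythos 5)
Your proposal is correct in outline and takes essentially the same route as the paper: an Aubin--Nitsche duality argument in which Theorem~\ref{thm:H1error} is applied verbatim to the dual FE error, followed by H\"older's inequality in expectation to control the product of random constants; the only cosmetic difference is that you pass through the $L^2$-norm via Riesz representation and pose the dual problem with datum $u-u_h$, whereas the paper poses it directly with datum $\mathcal{G}$. You are in fact more explicit than the paper's one-line citation of \cite{TSGU13} in isolating the quadrature consistency term $[\mathscr{A}_h-\mathscr{A}](\omega;u_h,\tilde z_h)$ as the delicate point, though be aware that your deferral of the ``smooth--smooth'' piece to a finer quadrature analysis is exactly the step that does not follow from Lemma~\ref{lem:quad} alone when $t>\beta/2$, since for merely H\"older-continuous $a$ the per-element error $|a_\tau-\widehat a_\tau|$ is genuinely only $O(|\tau|\,h^\beta)$.
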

\begin{proof} 
The proof follows, as in \cite[Lem.~3.3]{TSGU13}, from
H\"older's inequality using the fact that Theorem~\ref{thm:H1error}
applies verbatim also to the
FE error $\|z(\cdot,\omega)-z_h(\cdot,\omega)\|_V$ for the dual problem
$\mathscr{A}(\omega; v, z(\cdot,\omega)) = \mathcal{G}(v)$, for all $v
\in V$.
\end{proof}
Using the techniques in \cite[\S3]{TSGU13}, this corollary can be
extended in a straightforward way also to higher order moments
of the error or to functionals $\mathcal{G}$ of $u$ that are random,
nonlinear or bounded only on a subspace of $L^2(D)$.
In summary, we have provided in this section a recipe for extending all
results of \cite[\S3]{TSGU13} to general meshes, with the random field
being sampled on a regular grid and then interpolated onto the finite
element grid.

% ----------------------------------------------------------------------
% ----------------------------------------------------------------------
% ----------------------------------------------------------------------
% ----------------------------------------------------------------------
% ----------------------------------------------------------------------
% ----------------------------------------------------------------------

\section{QMC error analysis} \label{sec:qmc}

The QMC theory for integrals of the form \eqref{eq:unit_cube} is set in a
special weighted Sobolev space. Provided the integrand lies in this space,
we obtain an estimate for the root mean square error when a specially
chosen, randomly shifted lattice rule \eqref{eq:QMC} is used to
approximate \eqref{eq:unit_cube}. The cost for explicitly constructing a
good rule tailored to our analysis with $n$ points in $s$ dimensions grows
log-linearly in $n$ and quadratically in $s$ (cf.~Remark~\ref{rem:CBC}
below). However, applying the rule is essentially as cheap as obtaining
samples from a random number generator, see, e.g., \cite[\S7]{KuNu:15}.
Full details of the convergence theory are in other sources, e.g.,
\cite{GKNSSS:15, KuNu:15}, so we will be brief here.

Later in this section we use this theory to estimate the error when
the rule is applied to the particular $F$ given in \eqref{eq:int0}.
We assume first that the random field $Z$ is sampled by employing the quite
general factorisation \eqref{eq:Rfactor} of the covariance matrix $R$. Later,
in \S\ref{sec:expect} we will discuss the case when this is done by circulant
embedding.

\subsection{Abstract convergence result and proof strategy}

The relevant weighted Sobolev norm is defined as:
\begin{equation} \label{eq:norm2}
  \| F\|_{s,\bsgamma}^2
  \,:=\,
  \sum_{\setu\subseteq\{1:s\}} \frac{J_{\setu}(F)}{\gamma_\setu},
\end{equation}
where
\[
 J_{\setu} (F) \,:=\,
 \int_{\bbR^{|\setu|}}
  \bigg(
  \int_{\bbR^{s-|\setu|}}
  \frac{\partial^{|\setu|} F}{\partial
  \bsy_\setu}(\bsy_\setu;\bsy_{\{1:s\}\setminus\setu})\!\!
  \prod_{j\in\{1:s\}\setminus\setu} \phi(y_j)\,\rd\bsy_{\{1:s\}\setminus\setu}
  \bigg)^2
  \prod_{j\in\setu} \psi_j^2(y_j)
  \,\rd\bsy_\setu  \,.
\]
Here, $\{1 \! : \! s\}$ denotes the set  $\{1,2,\ldots,s\}$,
$\frac{\partial^{|\setu|}F}{\partial \bsy_\setu}$ denotes the mixed first
order derivative with respect to the ``active'' variables $y_j$ with
$j\in\setu$, $\bsy_{\{1:s\}\setminus\setu}$ denotes the ``inactive''
variables $y_j$ with $j\notin\setu$, and $\phi$ is the univariate normal
probability density (see \eqref{eq:unit_cube}). The remaining ingredients
in \eqref{eq:norm2} are the {\em weight parameters} $\gamma_\setu$
and the \emph{weight functions} $\psi_j$, which are used, respectively,
to moderate the relative importance of the derivatives of $F$ with
respect to $\bsy_\setu$ and to control the behaviour of these
derivatives asymptotically as $\Vert
\bsy \Vert_\infty \rightarrow \infty$. As in \cite{GKNSSS:15}, we shall
restrict ourselves to the choice
\begin{equation} \label{eq:psi}
  \psi_j^2(y_j) \,=\, \exp (-2\,\alpha_j\, |y_j|)\;,
  \quad\mbox{for some }\quad
   \alpha_j > 0 \ .
\end{equation}

The following result is then essentially \cite[Theorem~15]{GKNSSS:15} (see
also \cite{KuNu:15}).

\begin{theorem} \label{thm:QMC2}
Suppose $\Vert F \Vert_{s, \bsgamma} < \infty$ and $n$ is a power of a
prime. Then, a generating vector $\bsz
\in \mathbb{N}^s$ for a randomly shifted lattice rule \eqref{eq:QMC}
can be constructed so that the root mean square error in
applying \eqref{eq:QMC} to \eqref{eq:unit_cube} satisfies
\begin{align}
\label{eq:basicQMC}
 \sqrt{\bbE_\bsDelta\left[ |I_s(F) - Q_{s,n}(\bsDelta,F)|^2 \right]}
 \,\le\,
 \left(\frac{2}{n}\right)^{1/(2\kappa)}  \,
 \wC_s(\bsgamma, \bsalpha, \kappa)
   \
   \|F\|_{s,\bsgamma}\,,
\end{align}
for all $\kappa\in (1/2,1]$, where
\begin{equation} \label{eq:tilde-C}
\wC_s(\bsgamma, \bsalpha, \kappa) \ = \
\Bigg(
  \sum_{\emptyset\ne \setu\subseteq\{1:s\}}
  \gamma_\setu^{\kappa}\, \prod_{j\in\setu}
  \varrho(\alpha_j,\kappa)\Bigg)^{1/(2\kappa)}\, ,
\end{equation}
\begin{equation} \label{eq:rho-j}
  \varrho(\alpha,\kappa)
  \,=\, 2\left(\frac{\sqrt{2\pi}\,\exp(\alpha^2/\eta(\kappa))}{\pi^{2-2\eta(\kappa)}(1-\eta(\kappa))\eta(\kappa)}\right)^{\kappa}\,
  \zeta\big(\kappa + \tfrac{1}{2}\big),
  \qquad
  \eta(\kappa) \,:=\, \frac{2\kappa-1}{4\kappa}\,,
\end{equation}
and $\zeta$ is the Riemann zeta function.
\end{theorem}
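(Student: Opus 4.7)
The plan is to adapt the general theory of randomly shifted lattice rules in weighted Gaussian-product Sobolev spaces, as developed in \cite{GKNSSS:15} and surveyed in \cite{KuNu:15}, since this theorem is essentially \cite[Thm.~15]{GKNSSS:15} specialised to the exponential weight functions \eqref{eq:psi}. First I would use the componentwise transformation $v_j = \Phi(y_j)$, where $\Phi$ is the standard normal CDF, to recast $I_s(F)$ and $Q_{s,n}(\bsDelta,F)$ as an integral and a randomly shifted lattice cubature on $(0,1)^s$. In the resulting reproducing kernel Hilbert space framework, a standard Cauchy--Schwarz argument gives
\[
   \sqrt{\bbE_\bsDelta[|I_s(F)-Q_{s,n}(\bsDelta,F)|^2]}
   \;\le\; e^{\mathrm{sh}}_{s,n}(\bsz;\bsgamma,\bsalpha)\,\|F\|_{s,\bsgamma},
\]
where $e^{\mathrm{sh}}_{s,n}$ is the shift-averaged worst-case error in the dual RKHS. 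The task thus reduces to constructing $\bsz$ so that $e^{\mathrm{sh}}_{s,n}$ meets the bound appearing in \eqref{eq:basicQMC}.

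Next I would derive the standard closed-form expression for $[e^{\mathrm{sh}}_{s,n}]^2$ as a sum over non-empty subsets $\setu\subseteq\{1:s\}$, with outer weights $\gamma_\setu$ and a product over $j \in \setu$ of one-dimensional lattice-point sums. Combining the sub-additivity inequality $(\sum a_i)^\kappa \le \sum a_i^\kappa$, valid for $\kappa \in (0,1]$, with a CBC averaging argument as in \cite[\S 5]{KuNu:15}, one can select $\bsz$ component by component so that $([e^{\mathrm{sh}}_{s,n}]^2)^\kappa$ is bounded by $(2/n)$ times a sum of products with weights $\gamma_\setu^\kappa$ and one-dimensional contributions to be identified in the next step. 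Taking $(2\kappa)$-th roots then produces the $(2/n)^{1/(2\kappa)}$ prefactor of \eqref{eq:basicQMC}. The hypothesis that $n$ is a prime power is used precisely here so that the averaging over admissible $\bsz$-components in the CBC construction is well defined.

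The step I expect to be the main obstacle, and where the explicit form \eqref{eq:rho-j} of $\varrho(\alpha,\kappa)$ is established, is the evaluation of the one-dimensional factor remaining after the CBC step. This factor is the integral of a $(\phi(y)/\psi_j(y))^{1/\kappa}$-type quantity against a Bernoulli-polynomial kernel arising from the lattice sum. With $\psi_j^2(y) = \exp(-2\alpha_j|y|)$, completing the square in the Gaussian exponent turns the relevant integral into one of Gaussian type whose evaluation yields the factor $\exp(\alpha_j^2/\eta(\kappa))$; summing the Fourier representation of the Bernoulli kernel contributes the $\zeta(\kappa+\tfrac12)$ factor. The particular choice $\eta(\kappa) = (2\kappa-1)/(4\kappa)$ is exactly what is required to keep the Gaussian integral convergent once the loss of regularity from the sub-additivity step has been accounted for. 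Once this one-dimensional bound $\varrho(\alpha_j,\kappa)$ is in hand, multiplying over $j \in \setu$ and summing over $\setu$ produces the constant $\wC_s(\bsgamma,\bsalpha,\kappa)$ of \eqref{eq:tilde-C}, and reassembling the pieces gives \eqref{eq:basicQMC}.
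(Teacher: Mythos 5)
The paper offers no proof of Theorem~\ref{thm:QMC2} at all: it simply invokes \cite[Theorem~15]{GKNSSS:15} (see also \cite{KuNu:15,NK14}), and your sketch is an accurate outline of precisely the argument carried out in those references --- the componentwise map $v_j=\Phi(y_j)$ to the unit cube, the Cauchy--Schwarz bound by the shift-averaged worst-case error, Jensen's inequality $(\sum a_i)^\kappa\le\sum a_i^\kappa$ combined with CBC averaging and $1/\varphi(n)\le 2/n$ for $n$ a prime power, and the one-dimensional kernel estimate that yields $\varrho(\alpha,\kappa)$ with its $\exp(\alpha^2/\eta(\kappa))$ and $\zeta(\kappa+\tfrac12)$ factors. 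You therefore take essentially the same route as the paper, merely unpacking the citation it relies on.
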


A method for constructing the vector $\bsz$ one component at a time is
described in \cite{NK14} for weights $\bsgamma_\setu$ of a special form,
see Remark~\ref{rem:CBC} below.\bigskip

In the remainder of this section we shall apply this theory to the
function $F$ given in \eqref{eq:int0}. The main result is
Theorem~\ref{thm:QMC}. It is obtained in the steps summarised as
follows.
\begin{enumerate}
\item %1
By differentiating the parametrised discrete weak form \eqref{fepar},
we estimate the norms $\|\left(\partial^{\vert \setu\vert }
u_h/\partial\bsy_\setu \right) (\cdot,\bsy) \|_V $  for any $\setu \in
\{1:s\}$.  The estimate (which uses  an induction argument over the
set of all partial derivatives of $u_h(\cdot, \bsy)$) is given in
Theorem \ref{thm:reg} and involves the quantities
\begin{equation} \label{eq:bj}
  b_j \,:=\,  \Vert \bsB_j\Vert_\infty,
  \qquad j=1,\ldots,s,
\end{equation}
where $\bsB_j$ is the $j$th column of the $M\times s$ matrix $B$
introduced in \eqref{eq:Rfactor}. We let $\bsb \in \bR^s$ be the
vector $(b_1,\ldots,b_s)^\top$.

\item %2
Using the result from Step~1 and the linearity of $\calG$, we estimate
$\|F\|_{s, \bsgamma}$ in Theorem~\ref{thm:norm}. The shape parameters
$\alpha_j$ from \eqref{eq:psi} are constrained to be in the range
$\alpha_j>b_j$. The precise values of $\alpha_j$ are arbitrary at this
point, and so are the values of the weight parameters
$\gamma_\setu$.

\item %3
We substitute the result from Step~2 into the right hand side of
\eqref{eq:basicQMC} to obtain an error bound for this particular
$F$, and we choose the weight parameters $\gamma_\setu$ and then
the shape parameters $\alpha_j$ to minimise this bound. The end
result, Theorem~\ref{thm:QMC},  is a convergence estimate with order
$\calO(n^{-1/(2\kappa)})$, valid for $\kappa\in (1/2,1]$, and with
the implied constant depending on the sum $\sum_{j=1}^s
b_j^{2\kappa/(1+\kappa)}$.
\end{enumerate}

The theory is essentially independent of the choice of factorisation
\eqref{eq:Rfactor}. However, we will work out the detailed theory, in
\S\ref{sec:expect} and \S\ref{sec:Numerical}, only for the
circulant embedding approach.

\subsection{Regularity of $F$}

In this subsection it is helpful to introduce more general partial
derivatives than those mixed first order derivatives which appear in
\eqref{eq:norm2}. Thus for any multiindex $\bsnu \in \bN^s$ with order
$\vert \bsnu \vert = \sum \nu_j$, we let $\partial^{\bsnu}$ denote the
corresponding mixed derivative. For any multiindex $\bsnu \in \bN^s$ and
any vector $\bsc \in \bR^s$ we also write $\bsc^\bsnu =
\textstyle\prod_{j=1}^s c_j^{\nu_j}$.

\begin{theorem} \label{thm:reg}
For any $\bsy\in \bbR^s$, any $f\in V'$, and for any multiindex $\bsnu \in
\bN^s$, the solution $u_h(\cdot,\bsy)$ of \eqref{fepar} satisfies
\begin{equation*}
  \|\partial^{\bsnu} u_h(\cdot,\bsy) \|_V
  \,\le\,  | \bsnu |!
  \bigg( \frac{\bsb}{\log 2} \bigg)^\bsnu
  \frac{1}{{a}_{\min,M}(\bsy)}  \|f\|_{V'} \;,
\end{equation*}
where $\bsb = (b_1,\ldots,b_s)^\top$ is defined in \eqref{eq:bj} and
$a_{\min,M} (\bsy) = \min_{1\le i\le M} a(\bsx_i, \bsy)$.
\end{theorem}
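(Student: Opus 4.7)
The plan is an induction on $|\bsnu|$, carried out in the energy norm associated with $\scA_h(\bsy;\cdot,\cdot)$. First I would record the basic input: since $\bsZ(\bsy)=B\bsy+\bZbar$, we have $a(\bsx_j,\bsy)=\exp(\bZbar_j+\sum_k B_{jk}y_k)$, so $\partial^{\bsm} a(\bsx_j,\bsy) = \big(\prod_k B_{jk}^{m_k}\big)\,a(\bsx_j,\bsy)$, and hence $|\partial^{\bsm} a(\bsx_j,\bsy)|\le \bsb^{\bsm}\,a(\bsx_j,\bsy)$ by \eqref{eq:bj}. Using Proposition~\ref{prop1} and the nonnegativity of $p_{\tau,j}$, this upgrades to the crucial pointwise estimate
\[
|\partial^{\bsm}\ha_\tau(\bsy)| \,\le\, \bsb^{\bsm}\,\ha_\tau(\bsy), \qquad \bsm\in\bN^s,\ \tau\in\cT_h.
\]

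Next I would differentiate \eqref{fepar} $\bsnu$ times and apply the Leibniz rule to the bilinear form \eqref{eq:defAh}, obtaining for $|\bsnu|>0$ the recursion
\[
\scA_h(\bsy;\partial^{\bsnu}u_h,v_h) \,=\, -\sum_{\bszero\ne\bsm\le\bsnu}\binom{\bsnu}{\bsm}\sum_{\tau\in\cT_h}(\partial^{\bsm}\ha_\tau)(\bsy)\,(\nabla\partial^{\bsnu-\bsm}u_h\cdot\nabla v_h)\bigr|_\tau.
\]
Setting $v_h=\partial^{\bsnu}u_h$ and invoking the key estimate together with Cauchy--Schwarz in the element sum weighted by $\ha_\tau$ gives, in terms of the energy seminorm $\|v\|_{\scA_h}^2 := \scA_h(\bsy;v,v)$, the clean inequality
\[
\|\partial^{\bsnu}u_h\|_{\scA_h} \,\le\, \sum_{\bszero\ne\bsm\le\bsnu}\binom{\bsnu}{\bsm}\bsb^{\bsm}\,\|\partial^{\bsnu-\bsm}u_h\|_{\scA_h}.
\]
The base case $|\bsnu|=0$ follows from testing the weak form with $u_h$ and the coercivity bound $\|v\|_V^2\le \|v\|_{\scA_h}^2/a_{\min,M}(\bsy)$ (immediate from Proposition~\ref{prop1}), yielding $\|u_h\|_{\scA_h}\le \|f\|_{V'}/a_{\min,M}(\bsy)^{1/2}$.

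The inductive hypothesis I would propagate is $\|\partial^{\bsnu}u_h\|_{\scA_h}\le |\bsnu|!\,(\bsb/\log 2)^{\bsnu}\,\|f\|_{V'}/a_{\min,M}(\bsy)^{1/2}$. Substituting into the recursion, pulling out $\bsb^{\bsnu}$, and collapsing the multi-index sum via the standard identity $\sum_{\bsm\le\bsnu,\,|\bsm|=k}\binom{\bsnu}{\bsm}=\binom{|\bsnu|}{k}$, the inductive step reduces to the scalar inequality
\[
\sum_{k=1}^{|\bsnu|}\binom{|\bsnu|}{k}\frac{(|\bsnu|-k)!}{(\log 2)^{|\bsnu|-k}} \,\le\, \frac{|\bsnu|!}{(\log 2)^{|\bsnu|}},
\]
which after division by $|\bsnu|!/(\log 2)^{|\bsnu|}$ is exactly $\sum_{k\ge 1}(\log 2)^k/k!\le e^{\log 2}-1=1$. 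This is where the constant $\log 2$ comes from, and getting the algebra to close with precisely this constant is the one delicate point of the argument. Converting once more from the $\scA_h$-norm to the $V$-norm via $\|v\|_V\le\|v\|_{\scA_h}/a_{\min,M}(\bsy)^{1/2}$ then yields the claim.
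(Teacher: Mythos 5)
Your proposal is correct and follows essentially the same route as the paper: the same energy seminorm $\scA_h(\bsy;v,v)$, the same key estimate $|\partial^{\bsm}\ha_\tau|\le\bsb^{\bsm}\ha_\tau$ via Proposition~\ref{prop1}, the same Leibniz/Cauchy--Schwarz recursion, and the same passage to the $V$-norm through $a_{\min,M}$. The only difference is cosmetic: the paper phrases the induction through the auxiliary sequence $\Lambda_n$ and cites \cite[Theorem~14]{GKNSSS:15} for the bound $\Lambda_n\le n!/(\log 2)^n$, whereas you close the induction directly with the identity $\sum_{k\ge 1}(\log 2)^k/k!\le 1$ --- which is exactly the computation behind that cited bound.
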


\begin{proof}
The proof is similar to that of \cite[Theorem~14]{GKNSSS:15}, but there
are some differences due to the fact that we are working with the FE
discretisation \eqref{fepar} with quadrature and interpolation, and
because of the finiteness of the `expansion' \eqref{eq:Gauss_exp1}. (In
\cite{GKNSSS:15} an infinite KL expansion was used in the context of the
continuous problem.)

To simplify the proof we introduce the $\bsy$-dependent discrete norm
$\interleave \cdot\interleave$ on $V_h$:
\begin{equation}\label{eq:discnorm}
  \interleave v_h \interleave_{\bsy}^2
  \ := \  \sum_{\tau \in \cT_h} \ha_\tau(\bsy) \left. |\nabla v_h|^2 \right\vert_\tau\ ,
  \quad v_h \in V_h\ ,
\end{equation}
with $\hat{a}_\tau(\bsy)=\hat{a}_\tau(\omega)$ given by
\eqref{eq:defatau}. Then we have $\interleave v_h \interleave_{\bsy}^2 =
\scA_h(\bsy;v_h,v_h)$, see \eqref{eq:defAh}. Since we consider
piecewise linear finite elements, for $v_h\in V_h$ we have that $\nabla
v_h$ is piecewise constant on each element $\tau$.

We first prove by induction on $|\bsnu|$ that the solution $u_h(\cdot,
\bsy)$ of \eqref{fepar} satisfies
\begin{align} \label{eq:hypo}
  \interleave \partial^{\bsnu}u_h(\cdot, \bsy)  \interleave_{\bsy} \
  &\,\le\, \Lambda_{|\bsnu|}\, \bsb^{\bsnu}\,
  \interleave u_h(\cdot, \bsy)  \interleave_{\bsy}
  \ ,
  \quad \bsy \in \bR^s \ ,
\end{align}
where the sequence $(\Lambda_n)_{n\ge 0}$ is defined recursively by
$$\Lambda_0 \,:=\, 1 \quad \text{and} \quad  \Lambda_n \,:=\, \sum_{i=0}^{n-1} \binom{n}{i}
\Lambda_i, \quad \text{ for all} \quad  n\ge 1 \ .$$

Clearly \eqref{eq:hypo} holds for $|\bsnu| = 0$. For $\bsnu\ne\bszero$, we
differentiate \eqref{fepar}, using  the multivariate Leibniz rule to
obtain (since the right-hand side is independent of $\bsy$),
\begin{equation}\label{eq:leib}
  \sum_{\bsm\le\bsnu} \binom{\bsnu}{\bsm} \sum_{\tau \in \cT_h}
  \big(\partial^{\bsnu- \bsm} \ha_\tau(\bsy) \big)
  \big(\nabla \partial^{\bsm} u_h(\cdot,\bsy) \cdot  \nabla v_h\big)\big|_\tau \ = \ 0 ,
  \quad \text{for all} \quad v_h \in V_h .
\end{equation}
Now inserting $v_h = \partial^{\bsnu} u_h(\cdot,\bsy)$ into
\eqref{eq:leib}, keeping the term with $\bsm = \bsnu$ in the outer sum  on
the left-hand side and moving the remaining terms to the right-hand side,
we have
\begin{align*}
 &\interleave  \partial^{\bsnu} u_h(\cdot,\bsy)\interleave_{\bsy}^2
 \,=\, - \sum_{\satop{\bsm\le\bsnu}{\bsm\ne\bsnu}} \binom{\bsnu}{\bsm}
  \sum_{\tau \in \cT_h} \big(\partial^{\bsnu- \bsm}\ha_\tau(\bsy) \big)
 \big(\nabla \partial^{\bsm} u_h(\cdot,\bsy) \cdot \nabla \partial^{\bsnu} u_h(\cdot,\bsy) \big)\big|_\tau \nonumber\\
 &\,\le\,
\sum_{\satop{\bsm\le\bsnu}{\bsm\ne\bsnu}} \binom{\bsnu}{\bsm}
 \left(\max_{\tau\in\cT_h}
 \left|
 \frac{\partial^{\bsnu- \bsm} \ha_\tau(\bsy)}{\ha_\tau(\bsy)}\right|
 \right)  \,
  \sum_{\tau \in \cT_h} \ha_\tau(\bsy)\, \big|
  \big(\nabla\partial^{\bsm} u_h(\cdot,\bsy) \cdot \nabla\partial^{\bsnu} u_h(\cdot,\bsy)\big) \big|_\tau  \big| .
\end{align*}
Then, after an application of the Cauchy--Schwarz inequality and a
cancellation, we obtain
\begin{align} \label{eq:CS}
  \interleave \partial^{\bsnu} u_h(\cdot,\bsy)\interleave_{\bsy}
  &\,\le\,
 \sum_{\satop{\bsm\le\bsnu}{\bsm\ne\bsnu}} \binom{\bsnu}{\bsm}
 \left(\max_{\tau\in\cT_h}
 \left|
 \frac{\partial^{\bsnu- \bsm} \ha_\tau(\bsy)}{\ha_\tau(\bsy)}\right|
 \right) \,
 \interleave \partial^{\bsm} u_h(\cdot,\bsy)\interleave_{\bsy}  \ .
\end{align}

To estimate \eqref{eq:CS}, we have from Proposition~\ref{prop1} that
$\ha_\tau(\bsy) = \sum_{i=1}^M p_{\tau,i} \,a(\bsx_i,\bsy)$ with all
$p_{\tau,i}\ge 0$, and we recall from \eqref{eq:field} and
\eqref{eq:Gauss_exp1} that $a(\bsx_i,\bsy) = \exp(\sum_{j=1}^s B_{i,j}y_j
+ \overline{Z}_i)$. Then, noting that $a(\bsx_i,\bsy)\ge 0$ it is easy to
see that, for any multiindex $\bsnu$,
$$
  |\partial^{\bsnu} a(\bsx_i,\bsy)|
  \ = \ a(\bsx_i,\bsy)  \prod_{j=1}^{s} |B_{i,j}^{\nu_j}| \,
 \ \leq \ a(\bsx_i,\bsy)\,\bsb^\bsnu
  \ ,
 $$
which leads to $\vert \partial^{\bsnu} \ha_\tau(\bsy) \vert \leq
\ha_\tau(\bsy) \bsb^\bsnu$, and hence
\[
  \max_{\tau\in\cT_h}
 \left|
 \frac{\partial^{\bsnu- \bsm} \ha_\tau(\bsy)}{\ha_\tau(\bsy)}\right|
 \,\le\, \bsb^{\bsnu-\bsm}\,.
\]
Inserting this into \eqref{eq:CS} we obtain
\begin{align}
 \interleave \partial^{\bsnu} u_h(\cdot,\bsy)\interleave_{\bsy}
  &\,\le\,
 \sum_{\satop{\bsm\le\bsnu}{\bsm\ne\bsnu}} \binom{\bsnu}{\bsm}  \, \bsb^{\bsnu-\bsm}
 \interleave \partial^{\bsm} u_h(\cdot,\bsy)\interleave_{\bsy}  \ .
 \label{eq:CS1}
\end{align}
Using \eqref{eq:CS1}, the estimate \eqref{eq:hypo} then follows by
induction in exactly the same way as in \cite[Theorem~14]{GKNSSS:15}.

Now, using the definition of the discrete norm \eqref{eq:discnorm} and the
fact that $u_h(\cdot,\bsy)$ is the solution of \eqref{fepar}, we have
\begin{equation} \label{eq:connect1}
 \left(\min_{\tau \in \cT_h}\frac{\ha_\tau(\bsy)}{\vert \tau\vert} \right)  \Vert u_h(\cdot, \bsy)\Vert_V^2
 \,\le\, \interleave u_h(\cdot, \bsy)\interleave^2_{\bsy} = \langle f, u_h(\cdot, \bsy)\rangle
 \,\le\, \Vert f \Vert_{V'} \Vert u_h(\cdot, \bsy)\Vert_V \ .
\end{equation}
Hence, using Proposition~\ref{prop1}, we conclude that
\begin{equation} \label{eq:connect2}
 \Vert u_h(\cdot, \bsy)\Vert_V \,\le\, \frac{\Vert f \Vert_{V'}}{{a}_{\min,M}(\bsy)}
 \quad\mbox{and}\quad
 \interleave u_h(\cdot,\bsy)\interleave_{\bsy}
 \,\le\, \frac{\Vert f \Vert_{V'}}{\sqrt{{a}_{\min,M}(\bsy)}}
 \ .
\end{equation}
Using the same argument as for the lower bound in \eqref{eq:connect1},
together with Proposition~\ref{prop1} again, we obtain from
\eqref{eq:hypo} and \eqref{eq:connect2} that
\[
  \sqrt{a_{\min,M}(\bsy)}\, \Vert \partial^{\bsnu} u_h(\cdot, \bsy)\Vert_V
  \,\le\,
  \interleave \partial^{\bsnu} u_h(\cdot,\bsy)\interleave_{\bsy}
  \,\le\, \Lambda_{|\bsnu|}\,\bsb^\bsnu\, \frac{\Vert f \Vert_{V'}}{\sqrt{{a}_{\min,M}(\bsy)}}.
\]
This together with the estimate $\Lambda_n \ \leq n!/(\log 2)^n$ (proved
in \cite[Theorem 14]{GKNSSS:15}) completes the proof.
\end{proof}

\vspace{0.2cm}

We can now use this theorem to show that $F$ lies in the weighted Sobolev
space characterised by the norm \eqref{eq:norm2}. We make use of the
$s$-dependent quantities
$$
\Vert \bsb \Vert_{p,s} = \Bigg(\sum_{j=1}^s  \vert b_j\vert^p \Bigg)^{1/p}, \quad p > 0\ ,
\quad  \text{and} \quad
\Vert \bsb \Vert_{\infty,s} = \max_{j \in  \{1:s\}} \vert b_j\vert \ .
$$

\begin{theorem} \label{thm:norm}
Suppose that $\Vert \bsb \Vert_{1,s}$ is uniformly bounded with respect to
$s$. Suppose also that  $\alpha_j > b_j$ for all $j$. Then, for any $f\in
V'$ and for any linear functional $\calG\in V'$, the integrand $F(\bsy) =
\calG(u_h(\cdot,\bsy))$ in \eqref{eq:int0} satisfies
\begin{equation*}
 {\Vert F \Vert_{s, \bsgamma}}
 \,\le \, C
 \Bigg(\sum_{\setu\subseteq \{1:s\}}
\frac{1}{\gamma_\setu}
\left(\frac{\vert \setu \vert!}{(\log 2)^{\vert \setu \vert}}\right)^2
\prod_{j \in \setu}
\frac{\widetilde{b}_j^2}{\alpha_j - b_j}\Bigg)^{1/2}\, ,
\end{equation*}
where $C$ is independent of $s$ and
\[
  \widetilde{b}_j = \frac{b_j}{2 \exp(b_j^2/2) \Phi(b_j)}\,,
\]
with $\Phi$ denoting the univariate standard cumulative normal
distribution function.
\end{theorem}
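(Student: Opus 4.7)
The plan is to bound each term $J_\setu(F)/\gamma_\setu$ appearing in $\|F\|_{s,\bsgamma}^2$ and then sum. Since $\cG$ is linear and bounded on $V$, differentiation commutes with $\cG$ and we immediately get
\[
 \left|\frac{\partial^{|\setu|} F}{\partial \bsy_\setu}(\bsy)\right|
 \,\le\, \|\cG\|_{V'}\,
 \left\|\frac{\partial^{|\setu|} u_h(\cdot,\bsy)}{\partial \bsy_\setu}\right\|_V,
\]
which by Theorem~\ref{thm:reg} (with $\bsnu$ the characteristic function of $\setu$) is at most $|\setu|!\,(\log 2)^{-|\setu|}\,\bsb^\setu\,\|f\|_{V'}\|\cG\|_{V'}/a_{\min,M}(\bsy)$. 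Here $\bsb^\setu = \prod_{j\in\setu} b_j$.

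Next I would control the $\bsy$-dependence that sits in $1/a_{\min,M}(\bsy)$. Writing $a(\bsx_i,\bsy) = \exp(\sum_j B_{i,j} y_j + \overline{Z}_i)$ and using $|B_{i,j}|\le b_j$, the bound
\[
  \frac{1}{a_{\min,M}(\bsy)}
  \,\le\, \exp\!\Big(\max_i(-\overline{Z}_i)\Big)\,
   \exp\!\Big(\sum_{j=1}^s b_j|y_j|\Big)
\]
follows, with the prefactor absorbed into a constant independent of $s$ and $\bsy$. Substituting this into the inner integral defining $J_\setu(F)$, the integration over the inactive variables $j\notin\setu$ separates as a product of one-dimensional Gaussian integrals
\[
  \int_{\bbR} e^{b_j|y_j|}\,\phi(y_j)\,\rd y_j \,=\, 2\,e^{b_j^2/2}\Phi(b_j),
\]
a standard identity obtained by completing the square. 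Squaring as prescribed by \eqref{eq:norm2} and integrating against the active weights $\psi_j^2(y_j)=e^{-2\alpha_j|y_j|}$, each active coordinate contributes $\int_\bbR e^{-2(\alpha_j-b_j)|y_j|}\rd y_j = (\alpha_j-b_j)^{-1}$, which is where the assumption $\alpha_j>b_j$ enters.

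Collecting factors yields
\[
  J_\setu(F)
  \,\le\, C_0\,\Big(\tfrac{|\setu|!}{(\log 2)^{|\setu|}}\Big)^2
  \prod_{j\in\setu}\frac{b_j^2}{\alpha_j-b_j}\,
  \bigg(\prod_{j\notin\setu} 2 e^{b_j^2/2}\Phi(b_j)\bigg)^2,
\]
and multiplying and dividing by $\prod_{j\in\setu}(2e^{b_j^2/2}\Phi(b_j))^2$ converts the $b_j^2$ factors in $\setu$ into $\widetilde b_j^{\,2}$ while producing the full-index product $\prod_{j=1}^s 2e^{b_j^2/2}\Phi(b_j)$. The remaining — and only real — obstacle is to show that this latter product is bounded independently of $s$: here I would use $\log(2 e^{b^2/2}\Phi(b)) = b\sqrt{2/\pi} + O(b^2)$ near $0$, so that
\[
 \sum_{j=1}^s \log\big(2 e^{b_j^2/2}\Phi(b_j)\big)
 \,\le\, C_1 \sum_{j=1}^s b_j + C_2 \sum_{j=1}^s b_j^2
\]
is bounded thanks to the assumption that $\|\bsb\|_{1,s}$ is bounded (and hence so is $\|\bsb\|_{2,s}^2 \le \|\bsb\|_{\infty,s}\,\|\bsb\|_{1,s}$). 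Dividing by $\gamma_\setu$, summing over $\setu\subseteq\{1{:}s\}$, and taking square roots then delivers the stated inequality, with the $s$-independent constant $C$ absorbing $\|f\|_{V'}$, $\|\cG\|_{V'}$, $\exp(\max_i|\overline Z_i|)$ and the bound on $\prod_{j=1}^s 2e^{b_j^2/2}\Phi(b_j)$.
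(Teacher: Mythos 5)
Your proposal is correct and follows essentially the same route as the paper's proof: linearity of $\cG$ plus Theorem~\ref{thm:reg}, the lower bound $a_{\min,M}(\bsy)\ge \exp(-\|\bZbar\|_\infty)\exp(-\bsb^\top|\bsy|)$, separation of the Gaussian integrals over inactive and active coordinates, conversion to $\widetilde b_j$, and boundedness of $\prod_{j=1}^s 2e^{b_j^2/2}\Phi(b_j)$ via the $\ell^1$ assumption on $\bsb$. The only cosmetic difference is that the paper bounds the last product with the explicit inequality $2\Phi(b_j)<\exp(b_j)$, giving $\exp(2\|\bsb\|_{1,s}+\|\bsb\|_{2,s}^2)$, rather than your Taylor-expansion argument.
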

\begin{proof}
Using the linearity of $\calG$, together with Theorem~\ref{thm:reg}, but
replacing the multiindex $\bsnu$ with any set $\setu \subseteq \{1:s\}$
(i.e., restricting to the case where all $\nu_j\le 1$), we obtain the
following estimate for the first order partial derivatives of $F$
appearing in the norm \eqref{eq:norm2},
\begin{align} \label{eq:regF}
  \left\vert \frac{\partial^{\vert \setu\vert} F}{\partial \bsy_\setu}(\bsy) \right\vert
  &\,\le\,
  \frac{| \setu |!}{(\log 2)^{\vert \setu \vert}}
  \bigg(\prod_{j\in\setu} b_j\bigg)
  \frac{1}{a_{\min,M}(\bsy)}\, \|f\|_{V'}\, \Vert \cG \Vert_{V'} \nonumber\\
  &\,\le\,
  \frac{| \setu |!}{(\log 2)^{\vert \setu \vert}}
  \bigg(\exp(\bsb^\top \vert \bsy \vert) \prod_{j\in\setu} b_j\bigg)
  \bigg(\exp(\Vert \bZbar \Vert_\infty)\, \|f\|_{V'}\, \Vert \cG \Vert_{V'} \bigg)  \ ,
\end{align}
where we used the estimate $a_{\min,M}(\bsy) = \min_{1\le i\le M}
a(\bsx_i, \bsy) \ge \exp(-\Vert \bZbar \Vert_\infty)\exp(- \bsb^{\tr}
\vert \bsy \vert)$.

Examining the right-hand side of \eqref{eq:regF}, we see that the only
factor which depends on $\bsy$ is $\exp(\bsb^\top|\bsy|)$. An elementary
calculation (see \cite[Theorem 16]{GKNSSS:15}) shows that
\begin{align*}
  &\int_{\bbR^{|\setu|}}
  \bigg(
  \int_{\bbR^{s-|\setu|}}
  \bigg(\exp(\bsb^\top|\bsy|)\prod_{j\in\setu} b_j\bigg)
  \prod_{j\in\{1:s\}\setminus\setu} \phi(y_j)\,\rd\bsy_{\{1:s\}\setminus\setu}
  \bigg)^2
  \prod_{j\in\setu} \psi_j^2(y_j)
  \,\rd\bsy_\setu  \\
  &\,=\,
  \Bigg( \prod_{j\in \{1:s\}\backslash \setu } \left(2 \Phi(b_j) \exp(b_j^2/2)\right)^2  \Bigg)
  \Bigg( \prod_{j \in \setu} \frac{{b}_j^2}{\alpha_j-b_j}\Bigg) \\
  &\,=\,
  \Bigg( \prod_{j\in \{1:s\} } \left(2 \Phi(b_j) \exp(b_j^2/2)\right)^2 \Bigg)
  \Bigg( \prod_{j \in \setu} \frac{{\widetilde{b}}_j^2}{\alpha_j-b_j}\Bigg) \ .
\end{align*}
Since $2\Phi(b_j)\le 1 + 2b_j/\sqrt{2\pi} < \exp(b_j)$ for all $j$, we
have
\[
  \prod_{j\in \{1:s\} } \left(2 \Phi(b_j) \exp(b_j^2/2)\right)^2
  \,\le\, \exp(2\|\bsb\|_{1,s} + \|\bsb\|_{2,s}^2)\,.
\]
Thus, it follows from \eqref{eq:regF} and the definition of the norm
\eqref{eq:norm2} that
\begin{align*}
 \frac{\Vert F \Vert_{s, \bsgamma}}{\exp(\Vert \bZbar \Vert_\infty)\, \Vert f\Vert_{V'}  \Vert \cG\Vert_{V'}}
 &\,\le\, \exp(2\|\bsb\|_{1,s} + \|\bsb\|_{2,s}^2)
 \Bigg( \sum_{\setu\subseteq \{1:s\}}
\frac{1}{\gamma_\setu}
\left(\frac{\vert \setu \vert!}{(\log 2)^{\vert \setu \vert}}\right)^2
\prod_{j \in \setu}
\frac{\widetilde{b}_j^2}{\alpha_j - b_j}\Bigg)^{1/2}.
\end{align*}
The final result, with the constant factor $C$ being independent of
$s$, is then a consequence of the assumption on $\bsb$.
\end{proof}

\subsection{Error estimate}
\label{subsec:errorest}

In order to obtain a dimension-independent estimate for the QMC method we
need a stronger assumption on $\bsb$ than that used in
Theorem~\ref{thm:norm}. The following theorem shows that under that
stronger assumption there is a choice of $\bsgamma$ and $\bsalpha$ which
ensures that the QMC error is bounded independently of $s$. The
appropriate choice of $\bsgamma$ is of ``POD'' type,  which allows a good
generating vector $\bsz$ for the QMC rule to be efficiently computed by
the ``component-by-component''  procedure, see Remark~\ref{rem:CBC} below.

\begin{theorem}\label{thm:QMC}
Under the assumptions of Theorem~\textup{\ref{thm:norm}}, let $\kappa \in
(1/2,1)$, set $p = 2 \kappa/(1+\kappa)$ and assume in
  addition that $\Vert
\bsb\Vert_{p,s}$  is uniformly bounded with respect to $s$. Then there
exists a positive constant $C(\kappa)$ depending on $\kappa$ (as well as
on $\bZbar$, $f$, $\calG$) such that
\begin{equation}\label{eq:RMSbd}
  \sqrt{\bbE_\bsDelta [|\,I_s(F) - Q_{s,n}(\bsDelta,F)|^2]}
  \,\leq \,  C(\kappa) \, n^{-1/(2\kappa)}\,.
\end{equation}
\end{theorem}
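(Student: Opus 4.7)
The plan is to invoke Theorem~\ref{thm:QMC2} with the integrand $F(\bsy)=\calG(u_h(\cdot,\bsy))$, substitute the bound on $\|F\|_{s,\bsgamma}$ from Theorem~\ref{thm:norm}, and then choose the free parameters $\bsgamma$ and $\bsalpha$ so as to minimise the resulting upper bound on the right-hand side of~\eqref{eq:basicQMC}. After substitution the estimate contains the product of the two sums
\begin{equation*}
  \sum_{\emptyset\ne\setu\subseteq\{1:s\}}\!\gamma_\setu^{\kappa}\prod_{j\in\setu}\varrho(\alpha_j,\kappa)
  \quad\text{and}\quad
  \sum_{\setu\subseteq\{1:s\}}\frac{1}{\gamma_\setu}\bigg(\frac{|\setu|!}{(\log 2)^{|\setu|}}\bigg)^{\!2}\prod_{j\in\setu}\frac{\widetilde{b}_j^{\,2}}{\alpha_j-b_j}.
\end{equation*}
Minimising this product termwise in $\gamma_\setu$ (equivalently, applying Hölder's inequality with exponents $1+\kappa$ and $(1+\kappa)/\kappa$) gives the optimal POD-type weights
\begin{equation*}
  \gamma_\setu \,=\, \bigg(\bigg(\frac{|\setu|!}{(\log 2)^{|\setu|}}\bigg)^{\!2}\prod_{j\in\setu}\frac{\widetilde{b}_j^{\,2}}{(\alpha_j-b_j)\,\varrho(\alpha_j,\kappa)}\bigg)^{\!1/(1+\kappa)},
\end{equation*}
which collapses the two sums into a single sum and, after the $1/(2\kappa)$-th root demanded by~\eqref{eq:basicQMC}, reduces the right-hand side to $C\,n^{-1/(2\kappa)}\,\bigl(\sum_\setu(|\setu|!)^{p}\prod_{j\in\setu}T_j(\alpha_j)\bigr)^{(1+\kappa)/(2\kappa)}$, where $p:=2\kappa/(1+\kappa)$ and $T_j(\alpha_j):=C(\kappa)\bigl[\widetilde{b}_j^{\,2}\,\varrho(\alpha_j,\kappa)/(\alpha_j-b_j)\bigr]^{\kappa/(1+\kappa)}$.

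Next I would optimise each $T_j$ over $\alpha_j>b_j$ separately. Using~\eqref{eq:rho-j}, the dominant dependence on $\alpha_j$ is $\exp(\kappa\alpha_j^2/\eta(\kappa))/(\alpha_j-b_j)$, whose logarithmic derivative vanishes when $\alpha_j(\alpha_j-b_j)=\eta(\kappa)/(2\kappa)$. The resulting optimum automatically satisfies $\alpha_j>b_j$, and at this optimum the exponential factor of $\varrho$ must be balanced against the exponential decay built into $\widetilde{b}_j=b_j/[2\exp(b_j^2/2)\Phi(b_j)]$. A careful but direct bookkeeping then shows that $T_j(\alpha_j)\le C(\kappa)\,b_j^{p}$ uniformly in $b_j\ge 0$, with $p=2\kappa/(1+\kappa)$ emerging precisely from this exponential balance.

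It then remains to bound $\sum_\setu(|\setu|!)^{p}\prod_{j\in\setu}C(\kappa)\,b_j^{p}$ uniformly in~$s$. Grouping by cardinality $k=|\setu|$ and using the elementary estimate $\sum_{|\setu|=k}\prod_{j\in\setu}b_j^{p}\le(1/k!)\,\|\bsb\|_{p,s}^{\,pk}$, the sum reduces to $\sum_{k\ge 0}(k!)^{p-1}\bigl[C(\kappa)\,\|\bsb\|_{p,s}^{\,p}\bigr]^{k}$. Since $p<1$ for $\kappa\in(1/2,1)$, the factorial factor $(k!)^{p-1}$ decays super-exponentially and the series converges to a bound that depends only on $\kappa$ and on the assumed uniform bound on $\|\bsb\|_{p,s}$. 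Raising to the power $(1+\kappa)/(2\kappa)$ and reinserting the prefactor $n^{-1/(2\kappa)}$ yields~\eqref{eq:RMSbd}.

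The main obstacle is the per-coordinate $\alpha_j$-optimisation: one must verify that the minimum of $[\widetilde{b}_j^{\,2}\varrho(\alpha_j,\kappa)/(\alpha_j-b_j)]^{\kappa/(1+\kappa)}$ is indeed $\lesssim b_j^{p}$ for all $b_j\ge 0$. The small-$b_j$ regime is easy, but the large-$b_j$ regime is delicate because the exponential growth $\exp(\kappa\alpha_j^2/\eta(\kappa))$ in $\varrho$ must be controlled by the Gaussian factor in $\widetilde{b}_j$, and it is precisely this matching (together with $\eta(\kappa)=(2\kappa-1)/(4\kappa)$) that pins down the exponent $p=2\kappa/(1+\kappa)$ and thereby the summability hypothesis of the theorem.
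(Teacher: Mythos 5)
Your overall strategy coincides with the paper's: combine Theorem~\ref{thm:QMC2} with Theorem~\ref{thm:norm}, choose the POD weights \eqref{eq:gamma} by the H\"older/termwise minimisation, optimise the $\alpha_j$ coordinatewise, and finish with the combinatorial inequality $\ell!\sum_{|\setu|=\ell}\prod_{j\in\setu}a_j\le(\sum_{j}a_j)^{\ell}$ and the ratio test using $p<1$. The weight choice, the collapse to a single sum, and the final summability argument are correct and match the paper, up to a harmless exponent slip: after the H\"older step the per-coordinate factor carries $\varrho^{1/\kappa}(\alpha_j,\kappa)$, not $\varrho(\alpha_j,\kappa)$, which is why the paper's optimiser \eqref{eq:alpha} satisfies $\alpha_j(\alpha_j-b_j)=\eta(\kappa)/2$ rather than your $\eta(\kappa)/(2\kappa)$.

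The genuine gap is in the step you yourself flag as the main obstacle. The claim that $T_j(\alpha_j)\le C(\kappa)\,b_j^{p}$ \emph{uniformly in $b_j\ge 0$}, via a balance between the exponential in $\varrho$ and the Gaussian factor in $\widetilde{b}_j$, is false: since $\alpha_j>b_j$ and $1/\eta(\kappa)=4\kappa/(2\kappa-1)>4$ for $\kappa\in(1/2,1)$, the quantity $\varrho^{1/\kappa}(\alpha_j,\kappa)\,\widetilde{b}_j^{\,2}/(\alpha_j-b_j)$ behaves for large $b_j$ like $b_j^{3}\exp\bigl(b_j^{2}(1/\eta(\kappa)-1)\bigr)\to\infty$, so no admissible choice of $\alpha_j$ controls it polynomially in $b_j$; moreover $p=2\kappa/(1+\kappa)$ does not come from any exponential matching but simply from raising $\widetilde{b}_j^{\,2}\le b_j^{2}$ to the H\"older power $\kappa/(1+\kappa)$. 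The paper's argument instead exploits the hypothesis that $\Vert\bsb\Vert_{p,s}$ (hence $\Vert\bsb\Vert_{\infty,s}\le b_{\max}$) is uniformly bounded in $s$: with \eqref{eq:alpha} one gets $\alpha_j-b_j\ge\alpha_{\max}-b_{\max}>0$ and $\varrho(\alpha_j,\kappa)\le\varrho(\alpha_{\max},\kappa)$, so the per-coordinate factor is bounded by $(\tau_{\kappa}b_j^{2})^{p/2}$ with $\tau_{\kappa}$ depending only on $\kappa$ and $b_{\max}$, not on $j$ or $s$. With that replacement your remaining steps go through verbatim.
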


\begin{proof}
Some parts of the proof are similar to that of \cite[Theorem
20]{GKNSSS:15}, and for these parts we will be brief. We remind the
readers that each vector $\bsb = (b_1,\ldots,b_s)^\top$ depends
fundamentally on $s$; changing the value of $s$ leads to completely
different components $b_j$. This is different from the situation in
\cite{GKNSSS:15} where there is just one infinite sequence $\bsb$ that is
truncated to $s$ terms.

First, combining Theorem \ref{thm:QMC2} and Theorem \ref{thm:norm} we see
that \eqref{eq:RMSbd} holds with  $C(\kappa)$ proportional to
 \begin{align*}
  &{C}_s(\bsgamma, \bsalpha,\kappa)
  \,:=\, \wC_s(\bsgamma,\bsalpha,\kappa)
  \Bigg(\sum_{\setu\subseteq\{1:s\}}
  \frac{1}{\gamma_\setu } \left(\frac{|\setu|!}{\,{(\log 2)^{|\setu|}}}\right)^2\,
  \prod_{j\in\setu}
  \frac{\widetilde{b}_j^2}{\alpha_j-b_j}
  \Bigg)^{1/2},\,
\end{align*}
with $\wC_s(\bsgamma,\bsalpha,\kappa)$ defined in \eqref{eq:tilde-C}.
Now, we choose the weight parameters $\bsgamma$ to minimise $C_s(\bsgamma,
\bsalpha, \kappa)$. This minimisation problem was solved in \cite[Lemma
18]{GKNSSS:15}, yielding the solution:
\begin{align} \label{eq:gamma}
  \gamma_\setu
  \, = \, \gamma_\setu^*  \,:=\,
  \bigg( \bigg(\frac{|\setu|!}{(\log 2)^{|\setu|}}\bigg)^2
  \prod_{j\in\setu}
  \frac{\widetilde{b}_j^2}{(\alpha_j-b_j)\,\varrho(\alpha_j, \kappa)}
  \bigg)^{1/(1+\kappa)},
\end{align}
which is of ``product and order dependent'' (POD) form. With this choice,
one can show that
$$
C_s(\bsgamma^*, \bsalpha, \kappa) \ = \ S_s(\bsalpha, \kappa)^{(\kappa+1)/(2 \kappa)}\ ,
$$
where
\begin{equation} \label{eq:Slambda}
 {S}_s(\bsalpha,\kappa)
 \,=\,
 \sum_{\setu\subseteq\{1:s\}}
  \bigg( \bigg(\frac{|\setu|!}{(\log 2)^{|\setu|}}\bigg)^2 \,
  \prod_{j\in\setu} \frac{\varrho^{1/\kappa}(\alpha_j,\kappa)\,\widetilde{b}_j^2}{\alpha_j-b_j}\bigg)^{\kappa/(1+\kappa)}
  \,.
\end{equation}

It remains to estimate $S_s(\bsalpha, \kappa)$. Apart from the
constraint $\alpha_j > b_j$, the shape parameters $\alpha_j$ are
still free at this stage and so we choose them to minimise the right-hand
side of \eqref{eq:Slambda}. This minimisation problem is also solved in
\cite[Corollary 21]{GKNSSS:15}); the solution is
\begin{equation} \label{eq:alpha}
  \alpha_j
  \,:=\,
  \frac{1}{2}\bigg(b_j + \sqrt{b_j^2+1-\frac{1}{2\kappa}}\,\bigg).
\end{equation}

Now, to estimate $S_s(\bsalpha,\kappa)$, let $b_{\max}$ be an upper bound
on $\Vert \bsb\Vert_{\infty,s}$ for all $s$ (guaranteed by assumption).
Then $b_j \,\le\, b_{\max}$ for all $j=1,\ldots,s$ and all $s$. For a
given value of $\kappa\in (1/2,1]$, let $\alpha_{\max}$ denote the value
of \eqref{eq:alpha} with $b_j$ replaced by $b_{\max}$. Then we have
$\alpha_j \le \alpha_{\max}$ for all $j=1,\ldots,s$ and all $s$, and
\begin{align*}
  {\alpha_j - b_j}
  \,&=\, \frac{1}{2} \frac{1-1/(2\kappa)}
{\sqrt{b_j^2+1-1/(2\kappa)} + b_j}
  \, \geq \, \frac{1}{2} \frac{1-1/(2\kappa)}
{\sqrt{b_{\max}^2+1-1/(2\kappa)} + b_{\max}}
\, =  \,  {\alpha_{\max} - b_{\max}}.
\end{align*}
Note also that for $\varrho$ defined in \eqref{eq:rho-j} we have
$\varrho(\alpha_j,\kappa) \le \varrho(\alpha_{\max},\kappa)$ for all $j$
and all $s$. Moreover, since $2\exp(b_j^2/2)\Phi(b_j)\ge 1$, it follows
that $\widetilde{b}_j \leq b_j$, and so from \eqref{eq:Slambda} we can
conclude that, with $p := 2\kappa/(1+\kappa)$,
\begin{equation*}
 S_s(\bsalpha,\kappa) \,\le\, \sum_{\setu\subseteq\{1:s\}}
  (|\setu|!)^{p}
  \prod_{j\in\setu} (\tau_{\kappa}\, b_j^2)^{p/2}  \ = \
  \sum_{\ell = 0}^s (\ell!)^{p} \!\!\sum_{\setu\subseteq\{1:s\},\,|\setu|=\ell}\;
  \prod_{j \in \setu} (\tau_{\kappa} b_j^2)^{p/2}
  \;,
\end{equation*}
where
\[
  \tau_{\kappa} \,:=\, \frac{\varrho^{1/\kappa}(\alpha_{\max},\kappa)}{(\log 2)^2(\alpha_{\max} - b_{\max})} .
\]
Now using the inequality
$$
  \ell! \, \sum_{\satop{\setu\subseteq\{1:s\}}{|\setu|=\ell}} \prod_{j\in \setu} a_j
  \leq \bigg(\sum_{j=1}^s a_j\bigg)^\ell ,
$$
(which holds since for $|\setu| = \ell$ each term $\prod_{j\in\setu} a_j$
from the left-hand side appears in the expansion of the right-hand side
exactly $\ell!$ times, but the right-hand side includes other terms) and
with $K$ denoting the assumed uniform bound on $\Vert \bsb
\Vert_{p,s}^p$, we obtain
\begin{equation*}
 S_s(\bsalpha,\kappa) \,\le\,
\sum_{\ell = 0}^s (\ell!)^{p -1}
\tau_{\kappa}^{p\ell/2}  \bigg(\sum_{j=1}^s  b_j^{p} \bigg)^\ell
\le\,
\sum_{\ell = 0}^\infty (\ell!)^{p -1}
\tau_{\kappa}^{p\ell/2}  K^\ell
\,<\,\infty\,.
\end{equation*}
The finiteness of the right-hand side follows by the ratio test, on noting
that $p<1$.
\end{proof}

\begin{remark}\label{rem:CBC}
A generating vector $\bsz\in\bbN^s$ for a randomly shifted lattice rule
with $n$ points in $s$ dimensions that achieves the desired error bound
can be constructed using a component-by-component (CBC) algorithm, which
goes as follows: (1) Set $z_1 = 1$. (2) For each $k = 2,3,\ldots, s$,
choose $z_k$ from the set $\{1\le z\le n-1: \gcd(z,n)=1\}$ to minimise
\[
  E^2_{s,n,k}(z_1,\ldots,z_k) \,:=\,
 \sum_{\emptyset\ne\setu\subseteq\{1:k\}}
 \frac{\gamma_\setu}{n}\sum_{i=1}^n  \prod_{j\in\setu}\theta_j\left({\rm frac}\left( \frac{iz_j}{n} \right)\right),
\]
where the function $\theta_j(x)$ is symmetric around $1/2$ for
$x\in[0,1]$ and can be computed for $x\in [0,1/2]$ by
\[
  \theta_j(x)
  \,:=\,
  \frac{x - \frac12 +
    \exp(2 \alpha_j^2)
    \left[
      \Phi(2 \alpha_j)
      -
      \Phi\big(2\alpha_j+\Phi^{-1}(x)\big)
    \right]}{\alpha_j}
  -2 \int_{-\infty}^0 \frac{\Phi(t)^2}{\psi_j(t)^2} \rd{t}.
\]
The integral in the above formula for $\theta_j$ only needs
to be calculated once, while the general formulation \cite[Equation~(50)]{NK14}
also has an integral for the first part, which we evaluate explicitly
here for our particular choice of $\psi_j$.
Note that $\gamma_\setu$ and $\alpha_j$ fundamentally depend
on $s$ through $b_j$. When (and only when) the algorithm reaches $k=s$, the
expression $E^2_{s,n,s}(z_1,\ldots,z_s)$ is the so-called \emph{squared
shift-averaged worst case error}. See \cite{NK14} for the analysis of an
efficient implementation of the algorithm for POD weights
\eqref{eq:gamma}, so that the cost is $\calO(sn\log n + s^2n)$ operations
using FFT. We refer to the accompanying software of \cite{KuNu:15} for
an implementation.
\end{remark}

% ----------------------------------------------------------------------
% ----------------------------------------------------------------------
% ----------------------------------------------------------------------
% ----------------------------------------------------------------------
% ----------------------------------------------------------------------
% ----------------------------------------------------------------------

\subsection{QMC convergence in the case of circulant embedding}
\label{sec:expect}

The circulant embedding technique is a method of computing efficiently the
factorisation \eqref{eq:Rfactor}, thus yielding a method of sampling the
random vector $\bsZ$ via \eqref{eq:Gauss_exp1}. We describe the process
briefly here before verifying the assumptions of Theorem
\ref{thm:QMC}. This section is a summary of our results in \cite{paper1}.

The $M = (m_0+1)^d$ points $\{\bsx_i: i = 1, \ldots, M\}$ are assumed to
be uniformly spaced with spacing $h_0 :=1/m_0$ on a $d$-dimensional grid
over the unit cube $[0,1]^d$ enclosing the domain $D$. Using a vector
notation, we may relabel the points $\bsx_1,\ldots,\bsx_M$ to be indexed
by $\bk$ as
\begin{equation*}
 \bsx_{\bk} \,:=\, h_0 \bk  \qquad\text{for}\quad \bk = (k_1, \ldots, k_d)  \in \{0 , \ldots, m_0\}^d .
\end{equation*}
Then it is easy to see that (with analogous vector notation for the rows
and columns) the $M \times M$ covariance matrix $R$ defined in
\eqref{eq:Rmatrix} can be written as
\begin{equation} \label{eq:defR}
  R_{\bk, \bk'}
  \,=\, \rho\big(h_0 (\bk-\bk') \big),
  \qquad \bk, \bk'  \in \{ 0, \ldots, m_0\}^d  .
\end{equation}
If the vectors $\bk$ are enumerated in lexicographical ordering, then we
obtain a nested block Toeplitz matrix where the number of nested levels is
the physical dimension $d$.

We extend $R$ to a nested block circulant matrix $\Rext$. To do this,
it is convenient to extend to the infinite grid:
\begin{equation*}
 \bsx_{\bk} \,:=\, h_0 \bk  \qquad\text{for}\quad \bk \in \bbZ^d .
\end{equation*}
Then, to define  $\Rext$, we consider an enlarged cube $[0,\len]^d$ of
edge length $\len:=mh_0\ge 1$ with integer $m\ge m_0$. We assume that
$m_0$ (and hence $h_0$) is fixed and we enlarge $m$ (or equivalently
$\ell$) as appropriate. We introduce a $2 \len$-periodic map on $\bbR$ by
specifying its action on $[0,2 \len]$:
\begin{equation*}
  \varphi(x)
  \,:=\,
  \begin{cases}
    x          & \text{if}\quad 0\,\le\, x  \,\le\, \len ,\\
    2\len - x  & \text{if}\quad \len \,\le\, x  \,<\, 2\len .
  \end{cases}
\end{equation*}%
Now we apply this map elementwise and define an extended version
$\rhoext$ of $\rho$ as follows:
\begin{equation*}
 \rhoext(\bsx)
 \,:=\,
 \rho(\varphi(x_1),\ldots,\varphi(x_d))
 ,\qquad \bsx\in \bbR^d .
\end{equation*}
Note that $\rhoext$ is $2\len$-periodic in each coordinate direction and
$\rhoext(\bsx) = \rho(\bsx)$ when $ \bsx \in [0,\len]^d$. Then $\Rext$ is
defined to be the $s\times s$ symmetric nested block circulant matrix with
$s = (2m)^d$, defined, analogously to \eqref{eq:defR}, by
\begin{equation}\label{eq:Rext_def}
 \Rext_{\bk, \bk'}\,=\, \rho^{\rm{ext}}\big( h_0 (\bk - \bk') \big),
 \qquad \bk, \bk' \in \{0, \ldots,  2m-1\}^d  .
\end{equation}
It follows that $R$ is the submatrix of $\Rext$ in which the indices are
constrained to lie in the range $\bk, \bk' \in \{ 0,\ldots, m_0\}^d$.
Since $\Rext$ is nested block circulant, it is diagonalisable by FFT.
The following theorem is taken from \cite{GrKuNuScSl:11}:

\begin{theorem} \label{thm:decomp}
$\Rext$ has the spectral decomposition:
\begin{align*}
\ \Rext \ =\  \Qext \Lambdaext \Qext ,
\end{align*}
where $\Lambdaext$ is the diagonal matrix containing the eigenvalues of
$\Rext$,
which can be obtained by $\sqrt{s}$ times the Fourier transform on the
first column of $\Rext$,
and $\Qext = \Re (\cF) + \Im (\cF)$ is real symmetric,  with
$$
    \cF_{\bsk, \bsk'}
    =
    \frac{1}{\sqrt{s}} \exp \left({2 \pi} \ri
      \frac{\bsk' \cdot \bsk}{{2 m}} \right)
$$
denoting the {$d$-dimensional}  Fourier
matrix. If the eigenvalues of $\Rext$ are all non-negative then the
required $B$ in \eqref{eq:Rfactor} can be obtained by selecting $M$
appropriate rows of
\begin{equation*}
 \Bext := \Qext (\Lambdaext)^{1/2} .
\end{equation*}
\end{theorem}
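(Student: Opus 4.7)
\medskip
\noindent\textbf{Proof plan.}
Since $\rhoext$ is $2\ell$-periodic in each coordinate direction, the definition \eqref{eq:Rext_def} makes $\Rext$ a nested block circulant matrix with $d$ levels of nesting and size $s = (2m)^d$. The plan is first to diagonalise $\Rext$ by the $d$-dimensional Fourier matrix, then to rewrite this complex diagonalisation as a real symmetric spectral decomposition by exploiting the symmetries of $\cF$.

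The first step is the standard fact that every nested block circulant matrix is diagonalised by $\cF$, i.e. $\Rext\cF = \cF\Lambdaext$, where the diagonal entries of $\Lambdaext$ are obtained by applying $\sqrt{s}\,\cF$ to the first column of $\Rext$. In the 1D case this follows because the columns of $\cF$ are (normalised) eigenvectors of the shift operator; the $d$-dimensional case follows by the tensor-product structure of $\cF$ and the fact that $\Rext$ is a tensor product/composition of circulants along each axis. Since $\rho$ is even and real, $\rhoext$ inherits these properties, so $\Rext$ is real symmetric; a short computation then shows that $\Lambdaext$ is real (equivalently, the DFT of the symmetric real first column is real).

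Next, write $\cF = U + \mathrm{i} V$ with $U = \Re(\cF)$ and $V = \Im(\cF)$. Both $U$ and $V$ are real symmetric because the entries of $\cF$ are symmetric under swapping of the indices $\bsk,\bsk'$. Substituting into $\Rext\cF = \cF\Lambdaext$ and separating real and imaginary parts (using that $\Lambdaext$ is real and diagonal) gives $\Rext U = U\Lambdaext$ and $\Rext V = V\Lambdaext$. Adding these,
\begin{equation*}
\Rext\,\Qext \,=\, \Rext(U+V) \,=\, (U+V)\Lambdaext \,=\, \Qext\Lambdaext,
\end{equation*}
so $\Qext$ carries the eigenvectors; and $\Qext$ is clearly real symmetric.

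It remains to show that $\Qext$ is an involution, so that $\Rext = \Qext\Lambdaext\Qext$. From unitarity $\cF\cF^* = I$ and symmetry of $U,V$, one obtains the two identities $U^2 + V^2 = I$ and $UV = VU$. The key non-trivial point — what I expect to be the one step requiring actual computation — is the stronger identity $UV = 0$. This follows from a short product-to-sum calculation: each entry of $UV$ reduces to $\tfrac{1}{2s}\sum_{k'}[\sin(2\pi k'(k+k'')/s) - \sin(2\pi k'(k-k'')/s)]$, and each of these telescoping sine sums over a full period vanishes; in the nested $d$-dimensional setting the same conclusion follows coordinatewise from the tensor product structure. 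Consequently $(\Qext)^2 = U^2 + 2UV + V^2 = I$, which combined with $\Rext\Qext = \Qext\Lambdaext$ yields the claimed decomposition $\Rext = \Qext\Lambdaext\Qext$.

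Finally, when all entries of $\Lambdaext$ are non-negative, set $\Bext := \Qext(\Lambdaext)^{1/2}$. Using symmetry of $\Qext$ and diagonality of $\Lambdaext$, one checks $\Bext(\Bext)^\top = \Qext\Lambdaext\Qext = \Rext$. Since $R$ is the principal submatrix of $\Rext$ indexed by $\bsk\in\{0,\dots,m_0\}^d$, the factorisation \eqref{eq:Rfactor} is obtained by retaining the corresponding $M$ rows of $\Bext$, completing the proof.
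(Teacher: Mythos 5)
Your argument is correct, but note that the paper itself offers no proof of Theorem~\ref{thm:decomp}: it is imported verbatim from \cite{GrKuNuScSl:11}, so there is nothing in this paper to compare against line by line. Taken on its own, your derivation is sound and complete in all the places that matter. The reduction $\Rext\cF=\cF\Lambdaext$, the realness of $\Lambdaext$ (from evenness of $\rhoext$ and hence symmetry of the real matrix $\Rext$), the splitting into $\Rext U=U\Lambdaext$ and $\Rext V=V\Lambdaext$, and the identification of $UV=0$ as the one genuinely computational step are all exactly right; together with $U^2+V^2=I$ from unitarity this gives $(\Qext)^2=I$ and hence $\Rext=\Qext\Lambdaext\Qext$, and the restriction to the principal submatrix handles $B$. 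Two small points of precision. First, the sine sums $\sum_{k'}\sin(2\pi k' m/(2m)^{\,\cdot})$ vanish because they are imaginary parts of full geometric sums of roots of unity (or by the pairing $k'\leftrightarrow n-k'$), not by telescoping; the conclusion is unaffected. Second, in $d>1$ dimensions $\Re(\cF)$ and $\Im(\cF)$ are \emph{not} the tensor products of the one-dimensional $U$ and $V$ (e.g.\ $\Re(F\otimes F)=U\otimes U-V\otimes V$), so ``coordinatewise'' is slightly too quick; the correct statement is that when you expand $\Re(\cF)\,\Im(\cF)$ every resulting tensor factor product contains a factor $UV$ or $VU$, each of which vanishes by the one-dimensional computation, so the product is still zero. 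With that wording tightened, your proof is a valid self-contained justification of a result the paper only cites.
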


The use of FFT allows fast computation of the matrix-vector product
$\Bext\bsy$ for any vector $\bsy$, which then yields $B\bsy$ needed for
sampling the random field in \eqref{eq:Gauss_exp1}. Our algorithm from
\cite{paper1} for obtaining a minimal positive definite $\Rext$ is given
in Algorithm~\ref{alg1}. Our algorithm from \cite{paper1} for sampling an
instance of the lognormal random field is given in Algorithm~\ref{alg2}.
Note that the normalisation used within the FFT routine
differs among particular implementations. Here,
we assume the Fourier transform to be unitary.

\begin{algorithm} \label{alg1}
Input: $d$, $m_0$, and covariance function $\rho$.
\begin{enumerate}[noitemsep,topsep=0pt]
\item Set $m = m_0$.
\item Calculate $\bsr$, the first column of $\Rext$ in
    \eqref{eq:Rext_def}.
\item Calculate $\bsv$, the vector of eigenvalues of $\Rext$, by
    $d$-dimensional FFT on $\bsr$.
\item If the smallest eigenvalue $<0$ then increment $m$ and go to Step~2.
\end{enumerate}
Output: $m$, $\bsv$.
\end{algorithm}

\begin{algorithm} \label{alg2}
Input: $d$, $m_0$, mean field $\overline{Z}$, and
$m$ and $\bsv$ obtained by Algorithm~\ref{alg1}.
\begin{enumerate}[noitemsep,topsep=0pt]
\item With $s = (2m)^d$, sample an $s$-dimensional normal random vector $\bsy$.
\item Update $\bsy$ by elementwise multiplication with $\sqrt{\bsv}$.
\item Set $\bsw$ to be the $d$-dimensional FFT of $\bsy$.
\item Update $\bsw$ by adding its real and imaginary parts.
\item Obtain $\bsz$ by extracting the appropriate $M=(m_0+1)^d$
    entries of $\bsw$.
\item Update $\bsz$ by adding $\overline{Z}$.
\end{enumerate}
Output: $\exp(\bsz)$.
\end{algorithm}

In the case of QMC sampling, the random sample $\bsy$ in Step~1 of
Algorithm~\ref{alg2} is replaced with a randomly shifted lattice
point from $[0,1]^s$, mapped to $\bbR^s$ elementwise by the
inverse of the cumulative normal distribution function (see
\eqref{eq:QMC}). The relative size of the quantities $b_j = \|\bsB_j\|_\infty$
(as defined in \eqref{eq:bj}) determines the ordering of the QMC
variables in order to benefit from the good properties
of lattice rules in earlier coordinate directions in the construction
of the generating vector in Remark~\ref{rem:CBC}.

We prove in \cite{paper1} (under mild conditions) that
Algorithm~\ref{alg1} will always terminate. Moreover, in many cases the
required $m$ (equivalently $\ell$) can be quite small. Theorem
\ref{cor:matern-growth} below gives an explicit lower bound for the
required value of $\ell$.

\begin{example} \label{ex:Mat}
The \Mat family of covariances are defined by
\begin{equation} \label{defmatern}
 \rho(\bsx) = \
 \sigma^2 \, \frac{2^{1-\nu}}{\Gamma(\nu)}
 \bigg(\frac{\sqrt{2\nu}}{\lambda}\,  \|\bsx\|_2\bigg)^{\nu}
 K_\nu\bigg( \frac{\sqrt{2\nu}}{\lambda}\,  \|\bsx\|_2\bigg)\,,
\end{equation}
where $\Gamma$ is the Gamma function and $K_\nu$ is the modified Bessel
function of the second kind, $\sigma^2$ is the variance,  $\lambda$ is the
correlation length and $\nu \geq  1/2$ is a smoothness
parameter. The limiting cases $\nu \to 1/2$ and $\nu \to \infty$
correspond to the exponential and Gaussian covariances respectively,
see, e.g., \cite{GKNSSS:15}, however, using a slightly different scaling.
\end{example}

The following result, proved in \cite[Thm.~2.10]{paper1}, shows that
the growth of the size of $\ell$ with respect to the mesh size $h_0$ and
with respect to the parameters in the \Mat family is moderate. In
particular, for fixed  $\nu< \infty$, it establishes a bound on $\ell$
that grows only logarithmically with $\lambda/h_0$ and gets smaller as
$\lambda$ decreases.  Experiments illustrating the sharpness of this bound
are given in \cite{paper1}.

\begin{theorem}\label{cor:matern-growth}
Consider the \Mat covariance family \eqref{defmatern} with $1/2 \leq \nu <
\infty$ and $\lambda \leq 1$. Suppose $h_0/\lambda \leq e^{-1}$. Then
there exist constants $C_1>0$ and $C_2\geq 2 \sqrt{2}$ which may depend on
$s$ but are independent of $\ell, h_0, \lambda, \nu$ and $\sigma^2$, such
that $\Rext$ is positive definite if
\begin{align*}
\ell/\lambda  \ \geq  \  C_1\  + \ C_2\,  \nu^{1/2} \,
\log\left( \max\{ {\lambda}/{h_0}, \, \nu^{1/2}\} \right) \ .
\end{align*}
In the case $\nu=\infty$, the bound on $\ell$ is of the form $\ell
  \ge 1 + \lambda \, \max\{ \sqrt{2}\lambda/h_0, C_1\}$.
\end{theorem}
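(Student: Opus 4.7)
The plan is to exploit the fact that $\Rext$ is nested block circulant, so by Theorem~\ref{thm:decomp} its eigenvalues $\lambdaext_{\bsj}$, $\bsj\in\{0,\ldots,2m-1\}^d$, are obtained (up to $\sqrt{s}$) by the $d$-dimensional DFT of the samples $\rhoext(h_0\bsk)$. Thus positive definiteness of $\Rext$ is equivalent to nonnegativity of each such DFT value, and the strategy is to split every $\lambdaext_{\bsj}$ into a manifestly nonnegative ``spectral density'' contribution and an error arising from both the aliasing due to sampling and the periodic reflection used to define $\rhoext$.

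First I would apply Poisson summation on the doubled grid of spacing $h_0$. Writing $\bsxi_{\bsj}=\bsj/(2\ell)$, this expresses $h_0^{-d}\lambdaext_{\bsj}$ as $\sum_{\bsn\in\bbZ^d}\widehat{\rho^{\mathrm{per}}}(\bsxi_{\bsj}+\bsn/h_0)$ plus a discrepancy term $D_{\bsj}$ coming from $\rhoext-\rho^{\mathrm{per}}$, where $\rho^{\mathrm{per}}$ is the $2\ell$-periodic repetition of $\rho\mathbf{1}_{[-\ell,\ell]^d}$. For the \Mat family the continuous transform $\hat\rho(\bsxi)$ is explicit and proportional to $\sigma^2(\alpha^2+4\pi^2\|\bsxi\|_2^2)^{-\nu-d/2}$ with $\alpha=\sqrt{2\nu}/\lambda$, hence strictly positive, and Fourier-series coefficients of $\rho^{\mathrm{per}}$ are its samples at frequencies $\bsn/(2\ell)$; this ``main sum'' is therefore nonnegative and admits a lower bound by its worst-frequency term, essentially $C\sigma^2(\alpha^2+c/h_0^2)^{-\nu-d/2}$, i.e.\ polynomially small in $h_0/\lambda$.

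Second I would bound the error $|D_{\bsj}|$ by the $\ell^1$-tail $\sum_{\bsk:\|h_0\bsk\|_\infty\ge\ell}|\rho(h_0\bsk)|$. Using the classical asymptotic $K_\nu(r)\sim\sqrt{\pi/(2r)}\,e^{-r}$ as $r\to\infty$, the \Mat covariance decays like $\sigma^2(r/\lambda)^{\nu-1/2}e^{-\sqrt{2\nu}\,r/\lambda}$ for $r\gtrsim\lambda$, and converting the sum to a Riemann integral in spherical shells gives
\[
|D_{\bsj}|\;\le\;C\,\sigma^2\,(\lambda/h_0)^{d}\,(\ell/\lambda)^{\nu+(d-1)/2}\,e^{-\sqrt{2\nu}\,\ell/\lambda}\,.
\]
The factor $(\lambda/h_0)^d$ comes from the Riemann-sum normalisation, and the prefactor involves $\nu$ through both the Matérn normalising constant $2^{1-\nu}/\Gamma(\nu)$ and the polynomial in $\ell/\lambda$.

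Third I would force $|D_{\bsj}|$ to be dominated by the main-term lower bound. This reduces to solving
\[
\sqrt{2\nu}\,(\ell/\lambda)\;\ge\;\text{const}+(\nu+\tfrac{d-1}{2})\log(\ell/\lambda)+(2\nu+d)\log(\lambda/h_0)+\log\!\big(\tfrac{1}{\Gamma(\nu)}\big),
\]
which, by a standard self-referential inequality (bootstrapping $\log(\ell/\lambda)$ via the ansatz $\ell/\lambda\ge C_2\nu^{1/2}\log M$), yields
$\ell/\lambda\ge C_1+C_2\nu^{1/2}\log\max\{\lambda/h_0,\nu^{1/2}\}$; the $\max$ is needed precisely to absorb the self-referential $\log(\ell/\lambda)$ and the Stirling contribution from $\log(1/\Gamma(\nu))\sim\nu\log\nu$ in the large-$\nu$ regime. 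The hypothesis $h_0/\lambda\le e^{-1}$ ensures the logarithm is nonnegative. The case $\nu=\infty$ (Gaussian covariance) needs separate treatment because $\rho$ then decays like $e^{-\|\bsx\|^2/\lambda^2}$, so the exponential tail beats any polynomial in $\lambda/h_0$ once $\ell\gtrsim\lambda\sqrt{\log(\lambda/h_0)}$, which after bootstrapping collapses to the stated linear bound.

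The main obstacle will be tracking the $\nu$-dependence cleanly on the whole interval $[1/2,\infty)$: the Bessel-function asymptotic used above has an $O(1/r)$ relative error whose constant depends on $\nu$, and the Matérn normalisation $2^{1-\nu}/\Gamma(\nu)$ interacts with the polynomial-in-$\ell/\lambda$ prefactor in non-trivial ways. Handling this cleanly essentially forces the two-regime split encoded in the $\max\{\lambda/h_0,\nu^{1/2}\}$, and is the reason a single uniform constant $C_2\ge 2\sqrt{2}$ suffices only after one absorbs the near-optimal asymptotic slope $\sqrt{2\nu}$ appearing in the exponent of $\rho$.
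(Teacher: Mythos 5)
First, note that this paper does not actually contain a proof of Theorem~\ref{cor:matern-growth}: the result is quoted from the companion work \cite[Thm.~2.10]{paper1}, so there is no in-paper argument to compare against. Your outline nevertheless follows essentially the strategy used in that reference: diagonalise $\Rext$ by FFT, express each eigenvalue via Poisson summation as a nonnegative aliased-spectral-density sum plus an error controlled by the tail $\sum_{\|h_0\bsk\|_\infty\ge\ell}|\rho(h_0\bsk)|$, lower-bound the main term by the \Mat spectral density at the Nyquist frequency, and close the resulting transcendental inequality in $\ell/\lambda$ by bootstrapping. One small correction to your set-up: since $\rho$ is even, the reflected extension $\rhoext$ already coincides on the grid with the $2\ell$-periodisation of $\rho$ restricted to $[-\ell,\ell]^d$, so the error is not ``$\rhoext-\rho^{\mathrm{per}}$'' (which vanishes at the grid points) but the truncation-before-periodisation error; the tail sum you write down is still exactly the right bound for it.

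Two points are genuine gaps rather than omitted routine detail. (i) The entire content of the theorem is the explicit, uniform-in-$\nu$ form of the constants on $[1/2,\infty)$, and you defer precisely this. The asymptotic $K_\nu(r)\sim\sqrt{\pi/(2r)}\,e^{-r}$ is not uniform in $\nu$; one needs a genuine uniform upper bound (of the type $K_\nu(r)\le\sqrt{\pi/(2r)}\,e^{-r}e^{\nu^2/(2r)}$ in the regime where $r$ dominates $\nu$), together with a matching treatment of the normalisation $2^{1-\nu}/\Gamma(\nu)$ in both the tail estimate and the Nyquist-frequency lower bound. This is exactly where the two regimes encoded in $\max\{\lambda/h_0,\nu^{1/2}\}$ and the value $C_2\ge 2\sqrt{2}$ come from; asserting that the bookkeeping ``forces'' the stated form is a statement of the answer, not a derivation of it. (ii) Your explanation of the $\nu=\infty$ case is incorrect as written: for the Gaussian covariance the spectral density at the Nyquist frequency is itself \emph{exponentially} small, of order $\exp(-c(\lambda/h_0)^2)$, not polynomially small, so the required comparison $\exp(-(\ell/\lambda)^2)\lesssim\exp(-c(\lambda/h_0)^2)$ forces $\ell/\lambda\gtrsim\lambda/h_0$ directly. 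Your intermediate claim $\ell\gtrsim\lambda\sqrt{\log(\lambda/h_0)}$ would not suffice and does not ``collapse'' to the stated linear bound; the conclusion you quote is right, but the reasoning leading to it is not.
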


In order to verify the QMC convergence estimate given in Theorem
\ref{thm:QMC} in the case of circulant embedding, we need to bound
$\Vert \bsb \Vert_{p,s}$, where $\bsb$ is
defined in \eqref{eq:bj}. Since every entry in  $\Re(\calF) +
\Im(\calF)$ is bounded by $\sqrt{2/s}$, we have
\begin{equation} \label{eq:bj-lam}
  b_j \,=\, \|\bsB_j\|_\infty \,\le\, \sqrt{\frac{2}{s}\,\Lambda^{\rm ext}_{s,j}}\,,
\end{equation}
where $\Lambda^{\rm ext}_{s,j}$, $j=1,\ldots,s$, are the eigenvalues of
the nested block circulant matrix $\Rext$. Notice that we added `$s$'
explicitly to the notation to stress the dependence of these eigenvalues
on $s$.
A sufficient condition to ensure the uniform boundedness of
$\|\bsb\|_{s,p}$ required in
Theorem~\ref{thm:QMC} is that there exists a constant $C>0$,
independent of $s$, such that
 \begin{equation} \label{eq:QMCcriterion}
 \sum_{j=1}^{s}  \left(\frac{\Lambda^{\rm ext}_{s,j}}{s} \right)^{p/2}
 \ \leq \ C \ .
\end{equation}
It is thus important to investigate for what values of $p$ this inequality
holds. The smaller the value of $p$ the faster the convergence will be in
Theorem~\ref{thm:QMC}.

In \cite[\S 3]{paper1}, we conjecture (with supporting mathematical
arguments and empirical evidence) that the eigenvalues $\Lambda^{\rm
ext}_{s,j}$, when rearranged in non-increasing order, decay like
$j^{-(1+2\nu/d)}$ in case of the \Mat covariance.
This is the same as the decay rates of both the eigenvalues of the
original nested block Toeplitz matrix $R$ and of the KL eigenvalues of
the underlying continuous field~$Z$.
Under this conjecture, it follows that the smallest value of $p$ allowed for
\eqref{eq:QMCcriterion} to hold is just bigger than $2/(1+2\nu/d)$.
In turn this yields a theoretical convergence rate of nearly
\[
  \calO(n^{-\min(\nu/d,1)})
\]
in Theorem~\ref{thm:QMC} above, for any $\nu > d/2$,
independently of $s$. To see this,
recall that the convergence rate of $-1/(2\kappa)$ with respect to $n$ in
Theorem~\ref{thm:QMC} is related to $p$ via $p=2\kappa/(1+\kappa)$
with $\kappa \in (1/2,1)$. These bounds on $\kappa$ imply that, for the
conjectured rate of decay of the eigenvalues $\Lambda^{\rm ext}_{s,j}$,
Theorem~\ref{thm:QMC} is only applicable for $\nu > d/2$.

These conjectures will be investigated in detail in our numerical
experiments in the next section. As we will see there, the theoretically
predicted rates may be pessimistic. In the experiments here, we see nearly
optimal QMC convergence, i.e., $\mathcal{O}(n^{-1})$, even when $\nu < d$, and
at least as good convergence as for standard MC, i.e., $\mathcal{O}(n^{-1/2})$,
even when $\nu < d/2$. All these findings are in line with the results we
obtained in the case of KL expansions in \cite{GKNSSS:15}, and they guarantee a
dimension-independent optimal QMC convergence for sufficiently large smoothness
parameter $\nu$.

\section{Numerical Experiments}
\label{sec:Numerical}

In this section we perform numerical experiments on problem
\eqref{eq:diffeq} in 2D and 3D which illustrate the power of the proposed
algorithm. Our quantity of interest will be the average value of the
solution $u$
\begin{equation}\label{eq:QoI}
 \cG(u(\cdot,\bsy))  \ = \  \frac{1}{\vert T \vert} \int_T u(\bsx,\bsy) \,
\rd \bsx \ ,
\end{equation}
over some measurable $T\subseteq D$, with $D$ being an $L$-shaped domain
with a hole in 2D or the unit cube in 3D; all details to be specified
below. In both cases, the domain $D$ is contained in the unit cube
$[0,1]^d$, as assumed.\medskip

\noindent{\bf Random field generation.}
In all experiments the random coefficient $a$ is of the form
\eqref{eq:field} where $Z$ is a Gaussian random field with the \Mat
covariance \eqref{defmatern}. We take the mean $\overline{Z}$ to be $0$,
the variance to be $\sigma^2 = 0.25$, and we consider two different values
for the correlation length, namely $\lambda \in\{0.2, 0.5\}$, combined
with three different values for the smoothness parameter $\nu \in \{0.5,
2, 4\}$ in 2D and $\nu \in\{0.5, 3, 4\}$ in 3D (thus illustrating the cases
$\nu<d$, $\nu=d$, $\nu>d$ in each case). The forcing term is taken to be
$f\equiv 1$.

For different values of $m_0$, we first obtain values of the random field
on a uniform grid with $(m_0+1)^d$ points on the unit cube $[0,1]^d$ by
circulant embedding as described in \S\ref{sec:expect}. We choose $m_0\in
\{12, 24, 48, 96\}$ in 2D and $m_0\in \{7, 14, 28\}$ in 3D. The necessary
length $\ell = m/m_0$ of the extended cube $[0,\ell]^d$ to ensure positive
definiteness, where $m\ge m_0$, depends on the values of $d$, $\lambda$
and $\nu$, and affects the dimensionality $s=(2m)^d$ of $\bsy$. This
dependence is investigated in detail in \cite{paper1} (see Theorem
\ref{cor:matern-growth}). In
Table~\ref{tab:dim}, we summarise the values of $s$ for the different
combinations of parameters.\medskip

\newcommand{\cf}[1]{\tiny(#1)}
\newcommand{\nbe}[1]{\tiny(nbe=#1)}
\begin{table} [t]
\footnotesize
\begin{center}
\begin{tabular}{|l|rrr|rrr|}
 \hline
 $d=2$ & \multicolumn{3}{c|}{$\lambda =0.2$} & \multicolumn{3}{c|}{$\lambda =0.5$}  \\
 & $\nu=0.5$ & $\nu=2$ & $\nu=4$  & $\nu=0.5$ & $\nu=2$ & $\nu=4$ \\
 \hline
 $m_0 = 12$ &  576 &  576 &  576 &  1,296 &   5,476 &   9,216 \\[-1.5mm]
 \nbe{424}  & \cf{0.07} & \cf{0.03} & \cf{0.04} & \cf{0.11} & \cf{0.22} & \cf{0.15} \\
 $m_0 = 24$ & 2,304 &  2,916 &  4,900 &  8,464 &  34,596 &  59,536 \\[-1.5mm]
 \nbe{1,255} & \cf{0.04} & \cf{0.04} & \cf{0.05} & \cf{0.09} & \cf{0.26} & \cf{0.39} \\
 $m_0 = 48$ & 9,216 & 19,044 & 33,124 & 49,284 & 198,916 & 350,464 \\[-1.5mm]
 \nbe{5,559} & \cf{0.03} & \cf{0.05} & \cf{0.07} & \cf{0.15} & \cf{0.32} & \cf{0.44} \\
 $m_0 = 96$ & 36,864 & 114,244 & 200,704 & 270,400 & 1,077,444 & 1,721,344 \\[-1.5mm]
 \nbe{23,202} & \cf{0.02} & \cf{0.06} & \cf{0.11} & \cf{0.14} & \cf{0.40} & \cf{0.49} \\
 \hline
 \multicolumn{7}{c}{} \\
 \hline
 $d=3$ & \multicolumn{3}{c|}{$\lambda =0.2$} & \multicolumn{3}{c|}{$\lambda =0.5$}  \\
 & $\nu=0.5$ & $\nu= 3$ & $\nu=4$  & $\nu=0.5$ & $\nu=3$ & $\nu=4$ \\
 \hline
 $m_0 = 7$  &   2,744 &   2,744 &   2,744 &  64,000        & 97,336         & 125,000 \\[-1.5mm]
 \nbe{2,642} & \cf{0.002} & \cf{0.001} & \cf{0.001} & \cf{0.01} & \cf{0.02} & \cf{0.03}   \\
 $m_0 = 14$ &  27,000 &  39,304 &  39,304 & 1,061,208       & 2,000,376  & 2,406,104 \\[-1.5mm]
 \nbe{21,491}& \cf{0.001} & \cf{0.002} & \cf{0.002} & \cf{0.04} & \cf{0.06} & \cf{0.10} \\
 $m_0 = 28$ & 438,976 & 778,688 & 941,192 & 15,625,000 & 30,371,328 & 37,933,056 \\[-1.5mm]
 \nbe{172,421}& \cf{0.002} & \cf{0.003} & \cf{0.004} & \cf{0.06} & \cf{0.12} & \cf{0.14} \\
 \hline
\end{tabular}
\end{center}
\caption{The values of $s$ needed to ensure positive definiteness for
different combinations of parameters in 2D and 3D. The numbers in round
brackets show the cost of random field generation as a fraction of the
total computational cost per sample. These numbers increase with
increasing $s$ (from left to right) for a fixed FE mesh. For reference,
we also provide the number of elements \texttt{nbe} for each of the FE
meshes.}
\label{tab:dim}
\end{table}

\noindent{\bf FE solution of the PDE.}
For each realisation of the random field (i.e., for each $\bsy$), we
solve the PDE using a finite element (FE) method with piecewise linear
elements on an unstructured grid produced with the help of the {\sc
Matlab} PDE toolbox. The quadrature rule for the matrix assembly is
based on the mid point rule, where the values of the random field at the
centroids of the elements are obtained by multi-linear interpolation of
the values on the uniform grid, computed with circulant embedding (see
\eqref{eq:defAh} and \eqref{eq:defatau}). In order to balance the
quadrature error and the FE discretisation error in light of
Lemma~\ref{lem:quad} and Theorem~\ref{thm:H1error}, the
maximum FE mesh diameter is chosen such that $h\approx \sqrt{d}\,h_0
= \sqrt{d}/m_0$. In particular, we choose $h\in\{0.12, 0.06, 0.03,
0.015\}$ in 2D and $h\in\{0.24, 0.12, 0.06\}$ in 3D, for each of the
respective values of $m_0$ above. In 2D,  the {\sc Matlab} function {\tt
adaptmesh} is used to build a family of adaptive meshes for the $L$-shaped
domain with a hole (see Fig.~\ref{fig:L-shape}). We use the same adaptive
mesh, constructed with $a \equiv 1$, for all realisations. To find meshes
with our desired maximum mesh diameters $h$, we gradually increase the
\texttt{maxt} parameter of the \textsc{Matlab} \texttt{adaptmesh} command.
Fig.~\ref{fig:interp} zooms in on the shaded region in the bottom left
corner of each of the adaptive meshes to show the centroids of the
triangles in relation to the uniform grids. The PDE is solved with the
{\sc Matlab} function  {\tt assempde}.
For the 3D problem, we use the {\sc Matlab} PDE toolbox to mesh and solve
the PDE. The integral in \eqref{eq:QoI} is approximated by applying the
midpoint rule on each of the elements in $T$. In 2D, the resulting
linear system is solved with the default sparse direct solver
(``backslash'') in {\sc Matlab}. We believe that that is also the
solver used in the {\sc Matlab} PDE toolbox for our 3D experiments, but
we could not verify this.
\begin{figure}[t]
 \centering
 \includegraphics[width=0.3\textwidth]{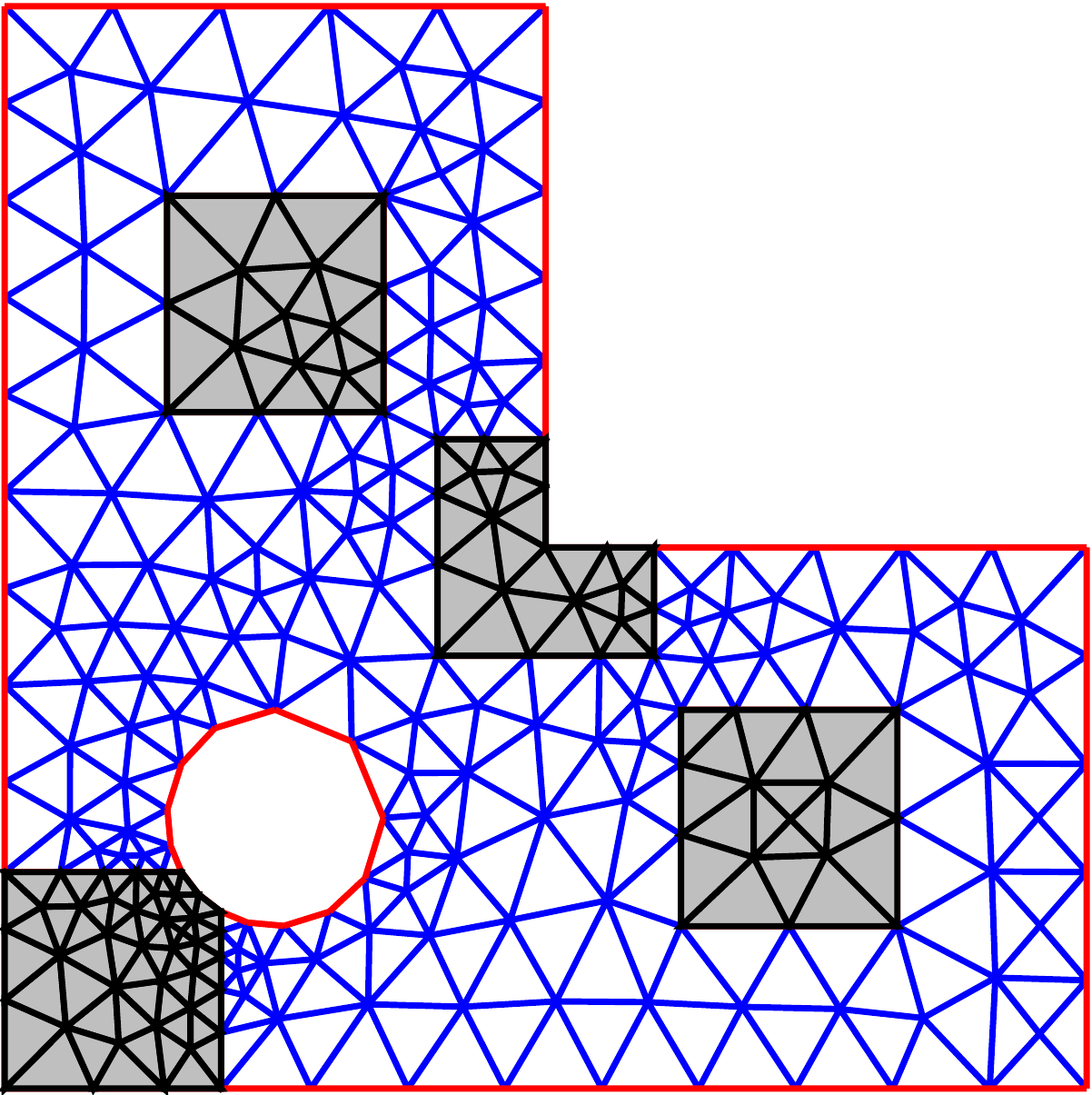} \quad
 \includegraphics[width=0.3\textwidth]{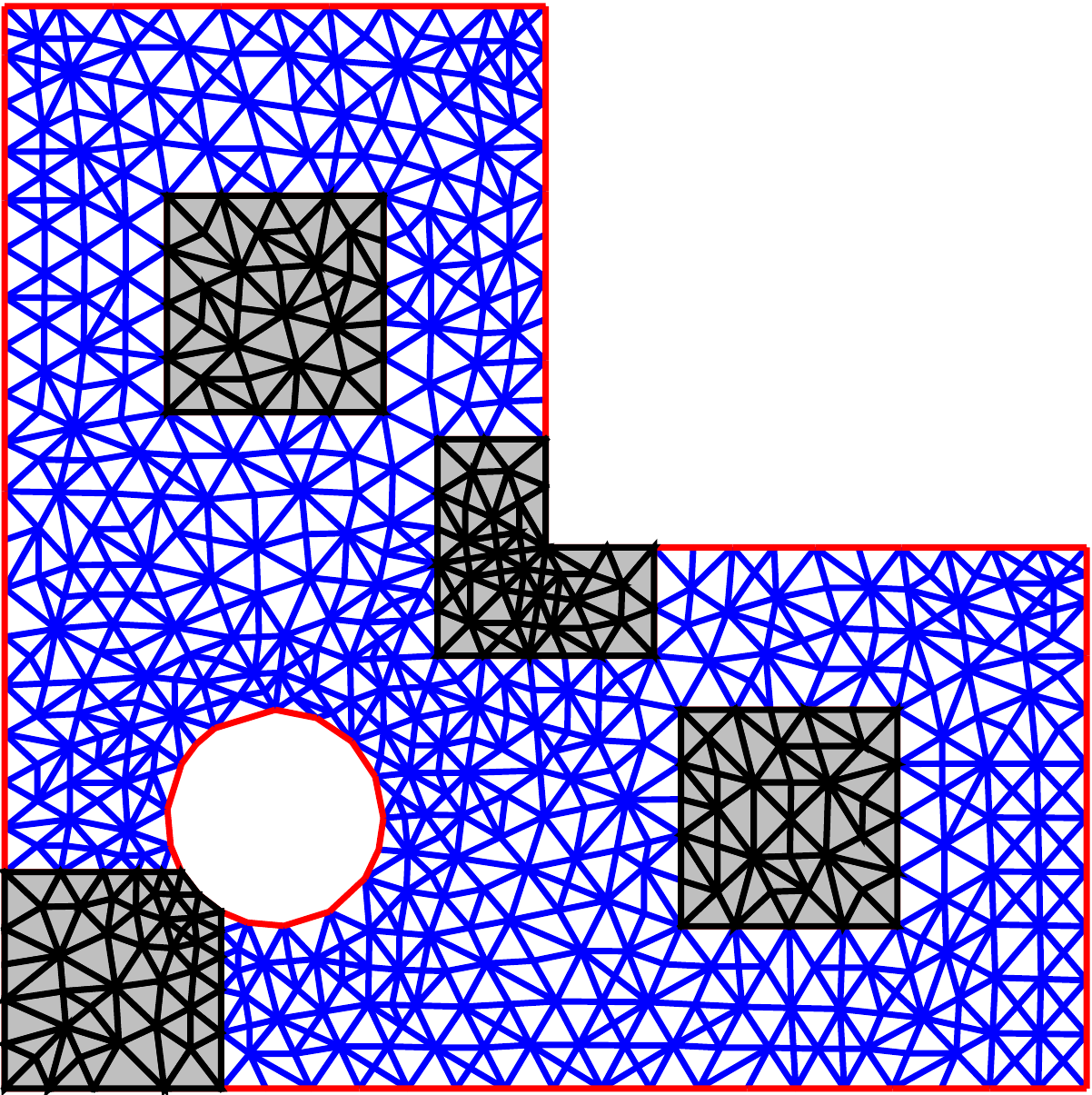} \quad
 \includegraphics[width=0.3\textwidth]{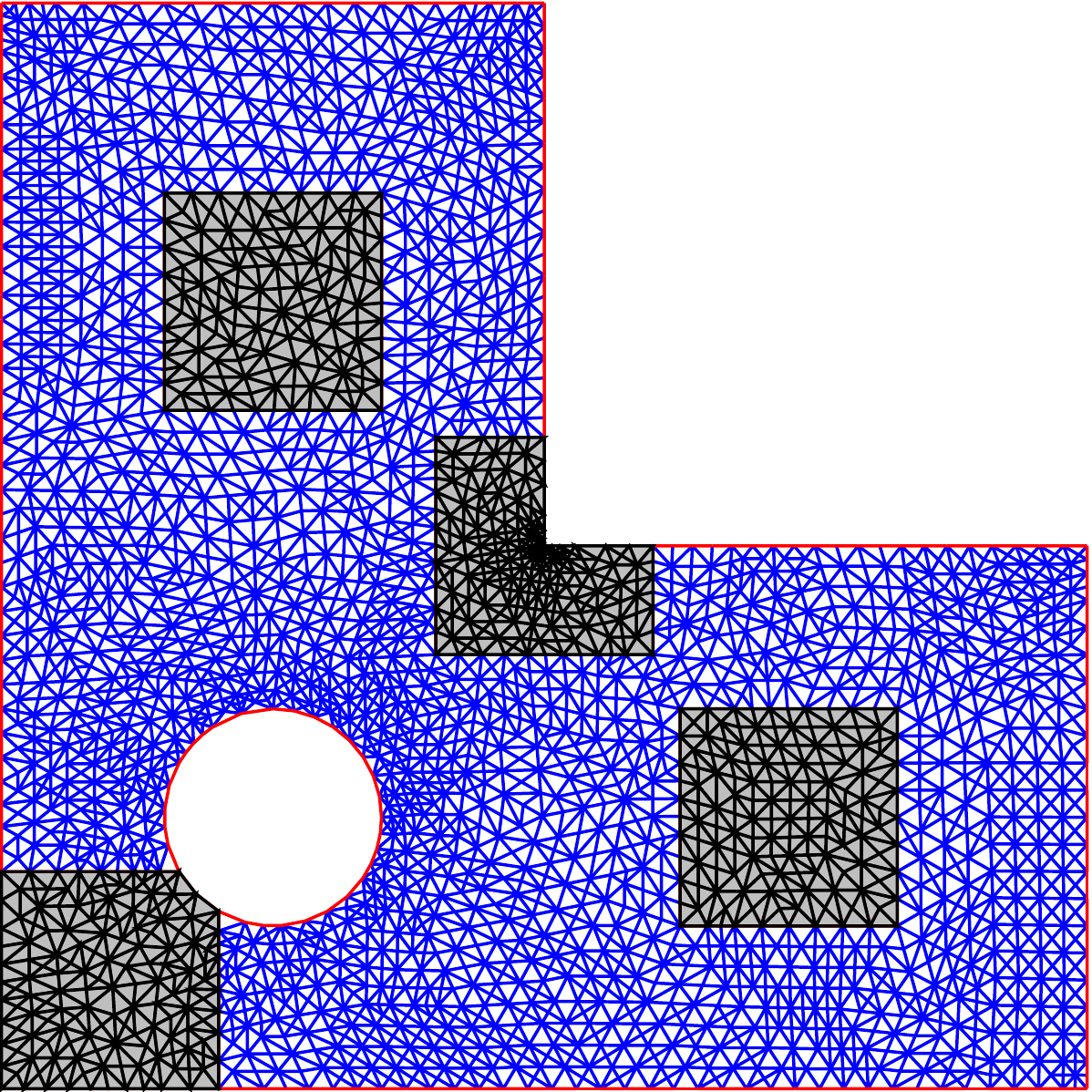}
 \caption{Adaptive FE mesh of an $L$-shaped domain with a hole.
 Left: $h=0.12$ (424 elements). Middle: $h=0.06$ (1,255 elements).
 Right: $h=0.03$ (5,559 elements).  Our fourth FE mesh not shown here:
 $h = 0.015$ (23,202 elements).
}\label{fig:L-shape}
\end{figure}
\begin{figure}[t]
\centering
 \includegraphics[width=0.3\textwidth]{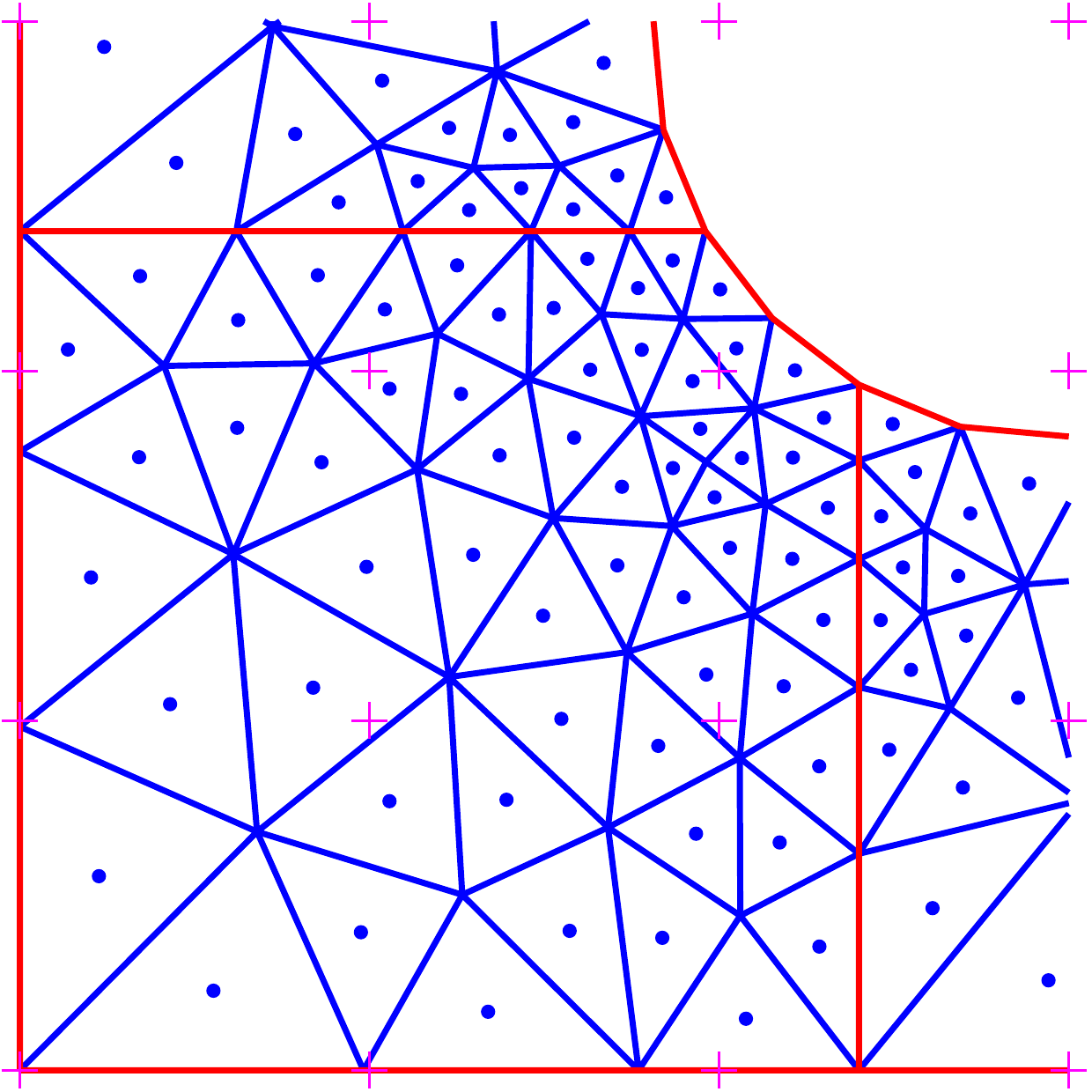} \quad
 \includegraphics[width=0.3\textwidth]{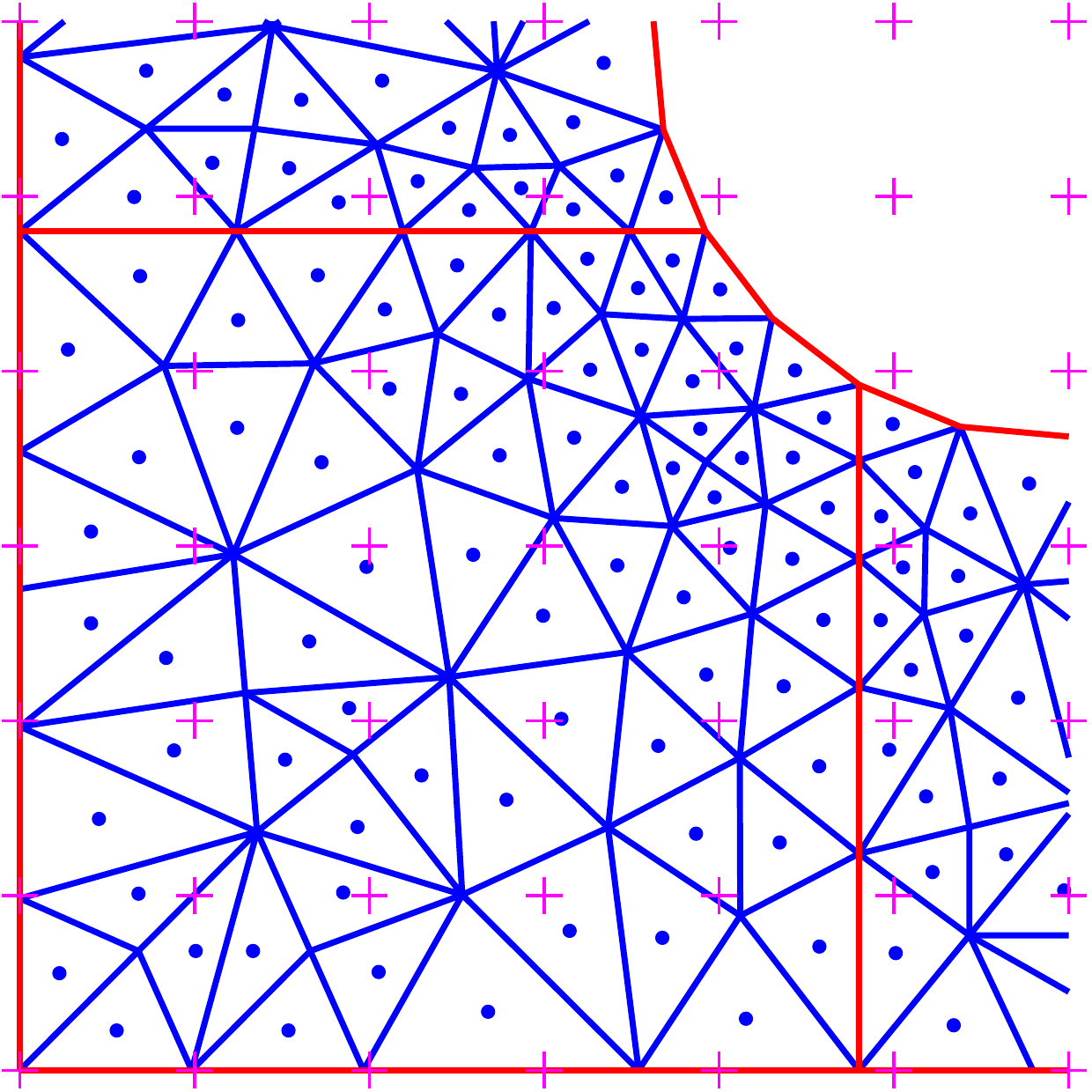} \quad
 \includegraphics[width=0.3\textwidth]{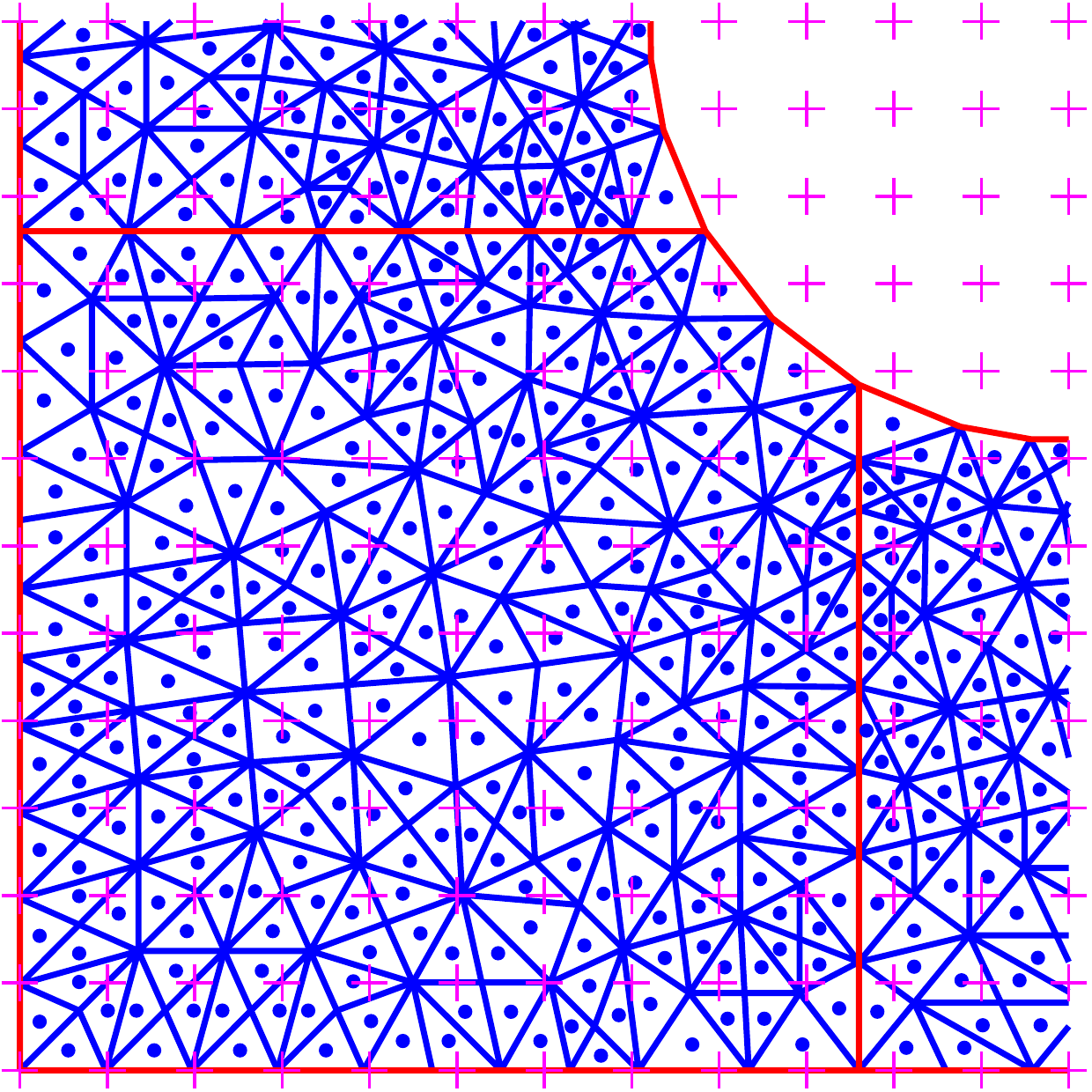}
 \caption{A local view of the meshes from
   Fig.~\ref{fig:L-shape}, showing the quadrature points at the
 centroids of the triangles (blue dots) and the
 uniform grid points where the random field is sampled (purple crosses).
 Left: $(m_0,h)=(12,0.12)$. Middle: $(m_0,h)=(24,0.06)$. Right:
 $(m_0,h)=(48,0.03)$.
}\label{fig:interp}
\end{figure}

As we can see from the fraction of time needed to construct the random
field, which is shown in brackets in Table~\ref{tab:dim} for each case,
the majority of time is spent on assembling and solving the FE systems. As
expected this is even more pronounced in 3D, since sparse direct
solvers are known to be significantly more expensive for 3D FE
matrices with respect to the number of unknowns (both in terms of the
order and in terms of the asymptotic constant), while the cost
of the FFT factorisation grows log-linearly with the number of
unknowns in 2D and in 3D.
In any case, the random field generation is
insignificant in the majority of cases and it takes less than 50\% of
the computational time in all cases.\medskip

\noindent{\bf Construction of lattice sequences.}
We approximate the expected value of \eqref{eq:QoI} by randomly shifted
lattice rules obtained using the fast CBC code from the QMC4PDE website
\url{https://people.cs.kuleuven.be/~dirk.nuyens/qmc4pde/} (see also
\cite{KuNu:15}). A typical call to the QMC4PDE construction script would
be:
\begin{center}
\small \texttt{./lat-cbc.py --s=2000 --b\_file=[f] --m=16 --d1=1 --d2=2
--a3=1 --outputdir=[o]}
\end{center}
where \texttt{s=2000} specifies an initial maximum number of dimensions and
\texttt{[f]} is a file containing the calculated values of $b_j$ in
\eqref{eq:bj} in nonincreasing order for a particular case of $d$,
$\lambda$ and~$\nu$. See \cite{KuNu:15} for an explanation of the
other parameters.

In specifying parameters for the CBC construction, we follow the theory of
\cite{KuNu:15}  as closely as possible, but we make a couple of
modifications for practical reasons. Firstly, the implementation
follows \cite{CKN06} to construct ``embedded'' lattice sequences in
base~$2$, so that in practice we can increase $n$ gradually without
throwing away existing function evaluations. At every power of~$2$, the
points form a full lattice rule which complies with the theory from
\S\ref{sec:qmc}. Secondly, with the POD weights in \eqref{eq:gamma} the
CBC construction to find the generating vector $\bsz$ has a cost of
$\calO(s^2n + sn\log
n)$ operations which becomes quite high for the large $s$ we are
considering (see Table~\ref{tab:dim}). Thus, we only carry out the CBC
construction up to a certain dimension $s^*$ and then randomly generate
the remaining components of $\bsz$. In particular, we
stop the CBC algorithm at the first component $s^*$ where the generating
vector has a repeated entry. Repeated components in $\bsz$
yield bad two-dimensional projections of lattice points and randomly
generated components are intuitively better in that situation. The
highest dimensionality for the switch-over dimension for all cases in
Table~\ref{tab:dim} is $s^*=1811$. The only two cases where we did not
need to add random components were $\nu=2$ and $\nu=4$, for $d=2$,
$m_0=12$ and $\lambda = 0.2$.\medskip

\noindent{\bf Estimation of (Q)MC error.}
For fixed $h$, we can compute the standard error on the QMC estimate
of the expected value of \eqref{eq:QoI} by using a number of
random shifts. Specifically, for each case we took $q=64$
independent random shifts of one $n$-point lattice rule, giving $q$
independent approximations $Q_1,\ldots,Q_q$ to the expected value. We take
their average $\overline{Q} = (Q_1+\cdots+Q_q)/q$ as our final
approximation, and we estimate the standard error on $\overline{Q}$ by
$\sqrt{\frac1{q(q-1)}\sum_{i=1}^q (Q_i-\overline{Q})^2}$.
The total number of function evaluations in this case is $N = q\,n$.
According to our theory (see also \cite{KuNu:15}), the
convergence rate for our randomised QMC
method is of the order $q^{-1/2}\,n^{-r} = q^{r-1/2}\,N^{-r}$, with
$r\approx\min(\nu/d,1)$. Hence, for $r > 1/2$, the constant in
any of the convergence graphs with respect to $N$ depends on $q^{r-1/2}$.
To provide less erratic curves, the number of random shifts is chosen to
be fairly large here. In practice, e.g., $q=16$ shifts would be
sufficient. This would effectively push all convergence graphs down,
leading to bigger gains for QMC.

We compare QMC with the simple Monte Carlo (MC)
method~\eqref{eq:MC} based on $N$ random samples. Denoting the function values
for these samples by $Y_1,\ldots Y_N$, then the MC approximation
of the integral is $\overline{Y} = (Y_1+\cdots + Y_N)/N$. The standard
error can be estimated by $\sqrt{\frac{1}{N(N-1)} \sum_{i=1}^N
(Y_i-\overline{Y})^2}$.  The expected MC convergence rate is
$\mathcal{O}(N^{-1/2})$.

In our figures later, we plot the \emph{relative standard error}
obtained by dividing the estimated standard error by the estimated
mean.\medskip

\noindent{\bf Computing environment.}
All our computations were run as serial jobs on reserved 8-core Intel
Xeon E5-2650v2 \@ 2.60 GHz nodes on the computational
cluster Katana at UNSW (or on almost identical hardware).
Since they are embarrassingly parallel, both the MC and QMC
simulations could easily be parallelised with roughly linear
speedup. We chose to run different jobs in parallel instead of
parallelising individual jobs, and to report the actual
serial computation times for our test cases.

\subsection{Results for an $L$-shaped domain in 2D}

In this example the domain $D$ is the complex 2D domain shown in
Fig.~\ref{fig:L-shape}: an $L$-shaped region with a hole. We consider
five choices for the averaging domain $T \subseteq D$ in
\eqref{eq:QoI}: \vspace{-0.2cm}
\begin{itemize}
\item [\textbf{T1}] the full domain, \vspace{-0.2cm}
\item [\textbf{T2}] the bottom left corner with a circular segment
    cut out,
    \vspace{-0.2cm}
\item [\textbf{T3}] the lower right interior square, and
    \vspace{-0.2cm}
\item [\textbf{T4}] the upper left interior square in a symmetrical
    location to \textbf{T3}, \vspace{-0.2cm}
\item [\textbf{T5}] the $L$-shape near the reentrant corner.
\end{itemize}

Fig.~\ref{fig:L-shape} shows the different averaging domains $T$, as
well as some of the adaptive meshes that were used. Note that
the circular sections of the boundary of $D$ are approximated
polygonally and the averaging domains
\textbf{T2}, \ldots, \textbf{T5} are resolved on all meshes. The meshes
are adapted to capture the loss of regularity near the reentrant corner,
but nevertheless the number of elements grows roughly with
$\mathcal{O}(h^{-2})$, the same as for a uniform family of meshes with
mesh size $h$. We specify the domain in
{\sc Matlab} by means of constructive solid geometry (CSG), i.e., the
union and subtraction of the basic pieces. This ensures that all the
averaging domains $T$ are covered by complete elements.\medskip

\noindent{\bf Mesh errors.}
Before we compare the performance of our QMC method with the basic MC
method, let us first estimate the discretisation errors for each of the
adaptive meshes. In Table~\ref{tab:2D-extrapolation}, we present results
for \textbf{T1} and \textbf{T5} for the case $\lambda=0.2$ and $\nu=2$.
The estimates of $\bE[\cG(u_h)]$, obtained using QMC, are stated
together with the estimated standard error for each mesh. We use
sufficiently many QMC cubature points, so that the significant
figures of the estimates are not affected
by QMC errors. The product of $m_0$ and $h$ is kept fixed
(approximately equal to $\sqrt{d}$), as discussed above. From the results
we can clearly see that the convergence rate for
the discretisation error is $\mathcal{O}(h^2)$ on the given
meshes. This shows that the mesh refinement near the reentrant
  corner is working optimally. Using
Richardson extrapolation, we can thus compute a higher order approximation
of the limit of $\bE[\cG(u_h)]$ that is stated in the last
row of Table~\ref{tab:2D-extrapolation}. The relative error with respect
to this extrapolated value is stated in the last column for both
\textbf{T1} and \textbf{T5}.
\begin{table}[t]
  \centering
 \begin{tabular}{|l|cc|cc|} \hline
 & \multicolumn{2}{|c|}{\textbf{T1}} &
                                       \multicolumn{2}{|c|}{\textbf{T5}} \\
    $(m_0,h)$ & $\bE[\cG(u_h)]$ & rel.~$h$-error &
                                                                     $\bE[\cG(u_h)]$
                                & rel.~$h$-error \\ \hline
    $(12,0.12)$ & 0.0114378 $\pm$ 3.4e-08\quad&  4.6e-02 & 0.0108212 $\pm$ 7.9e-08\quad & 7.9e-02 \\
    $(24,0.06)$ & 0.0118095 $\pm$ 2.8e-08\quad & 1.5e-02 & 0.0114086 $\pm$ 6.6e-08\quad & 2.9e-02 \\
    $(48,0.03)$ & 0.0119549 $\pm$ 4.1e-08\quad & 3.3e-03 & 0.0116850 $\pm$ 8.1e-08\quad & 5.9e-03 \\
    $(96,0.015)$ & 0.0119846 $\pm$ 4.9e-08\quad & 8.3e-04 & 0.0117372 $\pm$ 1.1e-07\quad & 1.5e-03\\ \hline
    & 0.0119945 {\footnotesize (extrapolated)}\hspace*{-0.5cm}& $\sim
                                                               h^2$ &
                                                                      0.0117546 {\footnotesize  (extrapolated)}\hspace*{-0.5cm}& $\sim h^2$ \\ \hline
  \end{tabular}
   \caption{Mesh convergence for the case $\lambda=0.2$ and $\nu=2$
     for \textbf{T1} and \textbf{T5}. The estimates of $\bE[\cG(u_h)]$
     (stated together with one standard deviation) are computed with our
     randomised lattice rule with $n$ sufficiently large, such that the
     standard error is significantly smaller than the discretisation
     error. Richardson extrapolation is used to compute a more
     accurate estimate of the limit of $\bE[\cG(u_h)]$, as $h\to
     0$ (final row). The columns denoted ``rel.~$h$-error'' give the relative error
     with respect to these extrapolated estimates.}\label{tab:2D-extrapolation}
\end{table}

The behaviour
is similar for the other quantities of interest and for the other values of
$\lambda$ when $\nu=2$. For $\nu=0.5$, on the other hand,
the solution is globally only in
$H^{3/2}$, so that the local mesh refinement near the reentrant corner
plays no role and the convergence of $\bE[\cG(u_h)]$ is only
$\mathcal{O}(h)$. Thus, to achieve
acceptable accuracy, comparable to the QMC errors we quote
below, we would in practice need much finer meshes for $\nu=0.5$.
However, since this would not
affect the behaviour of the QMC cubature errors, we did not do that.\medskip

\noindent
{\bf QMC convergence rates.}
\begin{figure}
  \centering
  \includegraphics{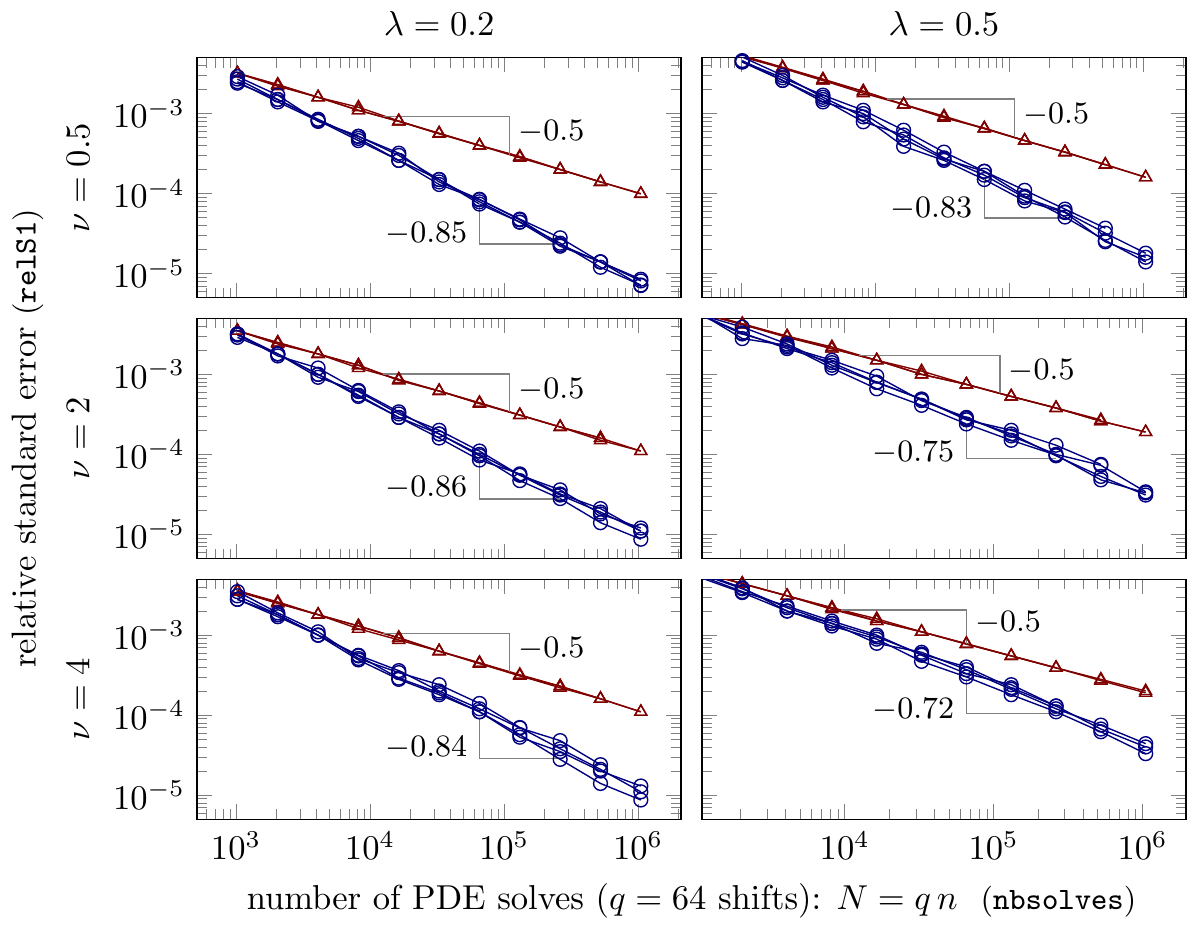} \\[4mm]
  \includegraphics{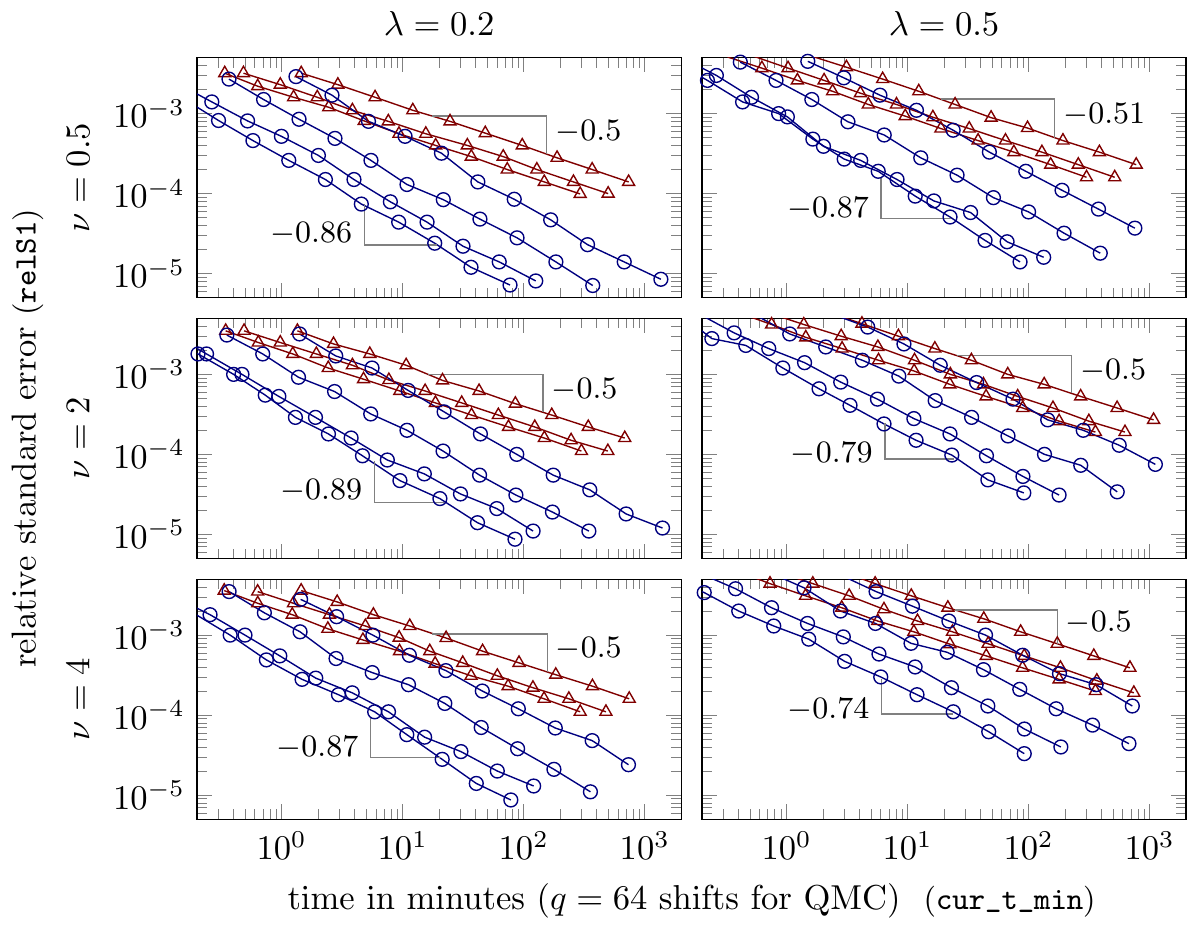}
  \caption{Relative standard error for $T=$ \textbf{T1}
(the average solution over the entire domain) against total number of
PDE solves (top) and execution time (bottom)
for Monte Carlo (red triangles) and QMC (blue
circles).
In the timing plot, results appear from bottom to top from the
coarsest mesh with $(m_0,h)=(12,0.12)$ to the finest mesh with
$(m_0,h)=(96,0.015)$ for QMC and $(m_0,h)=(48,0.03)$ for MC,
respectively.}\label{fig:2D-relS1}
\end{figure}
For the remainder we will now demonstrate the dimension independence of our QMC
method and its superiority over basic MC. In Fig.~\ref{fig:2D-relS1}, we
plot the relative standard error in~\eqref{eq:QoI} with $T=$ \textbf{T1} against
the total number of PDE solves (top) and against the total calculation
time (bottom). We consider six
combinations of the values of $\nu$ and
$\lambda$ and plot the graphs for four levels of mesh refinement. (The MC
estimates were not computed on the finest mesh.)
The convergence of the MC method is proportional to
$N^{-0.5}$, as expected. The convergence of the QMC method ranges from
$\mathcal{O}(N^{-0.72})$ up to $\mathcal{O}(N^{-0.89})$. For example, for
$\nu=0.5$ and $\lambda=0.2$, to achieve the same relative standard error of
$10^{-4}$ we need about $10^6$ PDE solves with the MC method while the QMC
method only needs about $3\cdot10^4$ PDE solves.
Also in terms of computational time, all the results consistently show huge
  computational savings for the QMC method over the MC method, even
  with the relatively large number of $q=64$ random shifts.

We note that the convergence graphs are meant to illustrate the
convergence behaviour and in practice one would not try to achieve such
high precision, especially not on the coarser meshes. One would rather aim
to balance the QMC errors with the discretisation errors in
Table~\ref{tab:2D-extrapolation}. From the theory, we expect the
smoothness $\nu$ of the random field to have an effect on the convergence
rate of QMC. Specifically, the bound in Theorem \ref{thm:QMC} would
suggest a rate of $\mathcal{O}(N^{-\min(\nu/2,1)})$ in 2D, as
discussed in \S\ref{sec:expect}. This effect is not immediately
observed in the graphs. For $\nu=2$ and~$4$ we expect an
asymptotic convergence rate of order~$1$, while we see very good
convergence rates on the graphs, we did not reach this asymptotic regime
yet. On the other hand, we do observe excellent convergence behaviour
for the case $\nu=0.5$ for which our theory does not apply.  From the
graphs we also observe that a smaller correlation length $\lambda$
corresponds to a better convergence rate, which is in full agreement
with our findings in \cite{GrKuNuScSl:11}.

A more important observation is the overlay of the convergence lines for the
different meshes in the plots of relative standard error versus
  number of PDE solves in Fig.~\ref{fig:2D-relS1}.
For example, for the case $\lambda=0.5$ and $\nu=4$ the
dimensionality $s$ increases from about 9 thousand to 1 million as we
increase $m_0$ from $12$ to $96$ (see Table~\ref{tab:dim}), while the
convergence rate and the asymptotic constant for the relative standard error
are clearly independent of the increasing dimension.

In Fig.~\ref{fig:2D-relS2-relS5}, we confirm that the superiority
  is independent of the quantity of interest, by presenting similar
  graphs as for \textbf{T1} also for \textbf{T2} and \textbf{T5}. We
  do not include the results for the two symmetrical squares
\textbf{T3} and \textbf{T4}, which look very similar.
\begin{figure}
  \centering
  \includegraphics{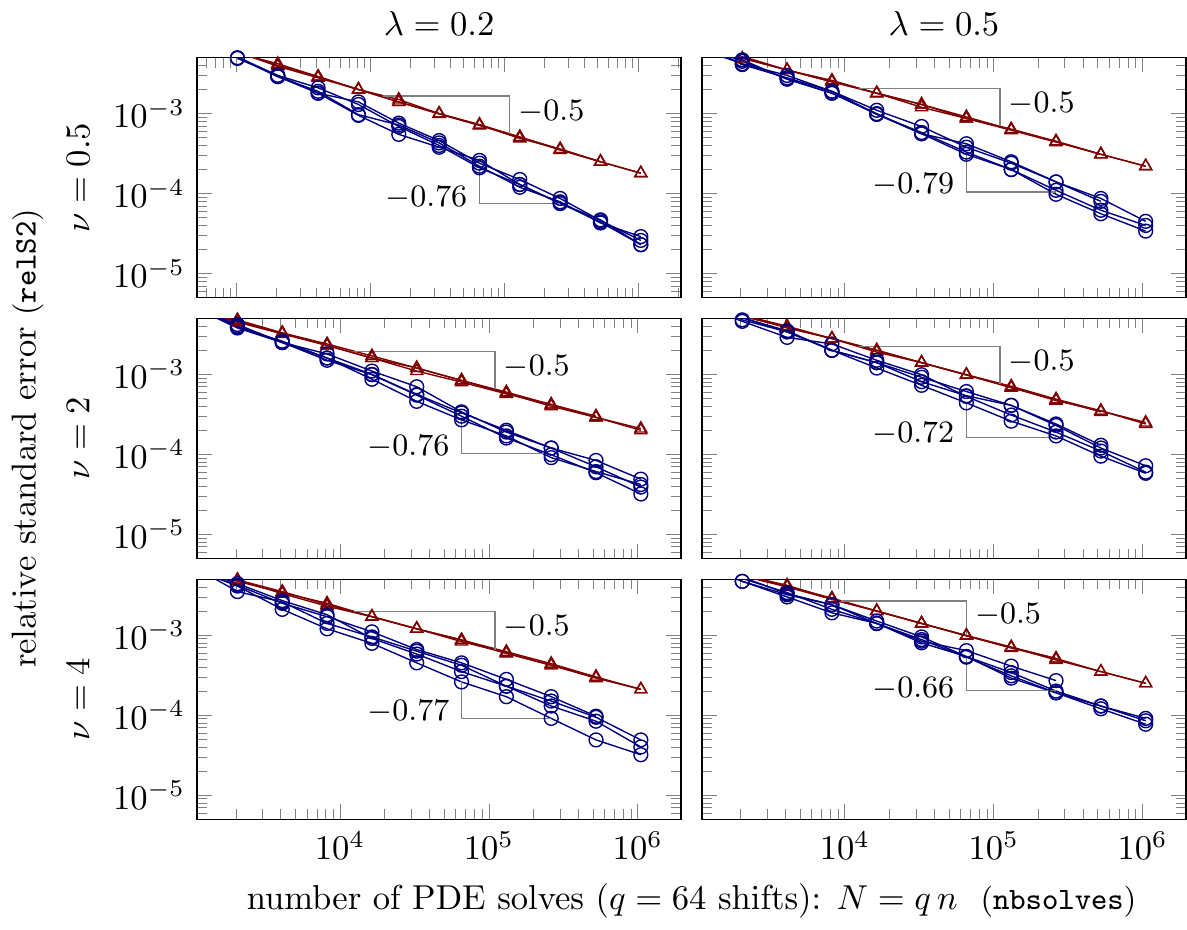} \\[4mm]
  \includegraphics{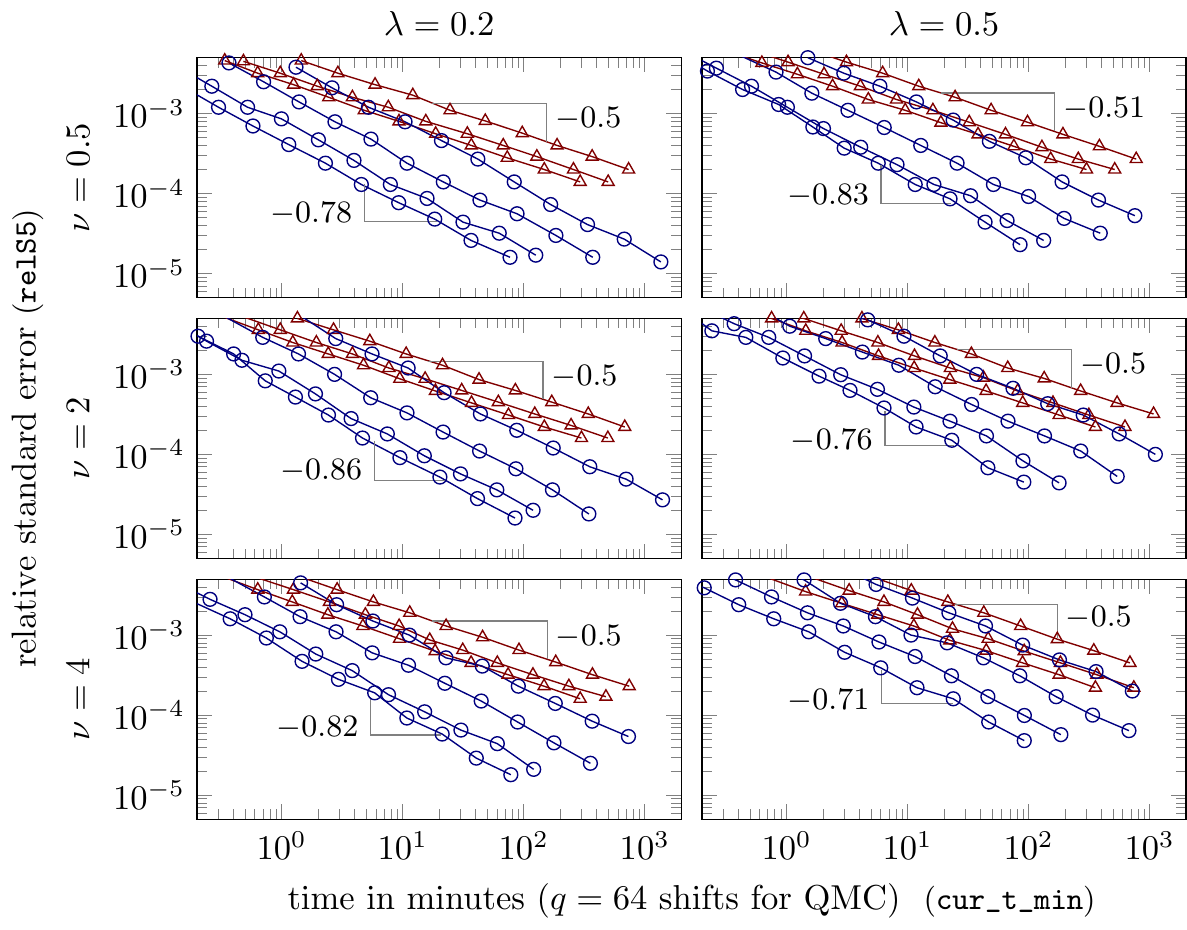}
  \caption{Relative standard error for Monte Carlo (red triangles)
      and the lattice sequences (blue circles) for \textbf{T2} (the average
      over the bottom-left corner with circular cutout) vs.\ number of
      PDE solves (top), as well as for \textbf{T5} (the average over
      the L-shaped region near the reentrant corner) vs.\ execution time (bottom).
  In the bottom figure, the results appear from bottom to top
  from the coarsest mesh to the finest.}\label{fig:2D-relS2-relS5}
\end{figure}

\subsection{Results for the unit cube in 3D}

Our second example is the unit cube in 3D and our quantity of interest is
\eqref{eq:QoI} with $T = D = [0,1]^3$. We use the mesh generator and
the solver from the \textsc{Matlab} PDE toolbox to obtain three meshes
with desired maximum mesh diameters $h$ and
to calculate the solution. To evaluate the random field $Z$ at the centroids
of the elements we used the function \texttt{interpolateSolution} from
the toolbox.
\begin{figure}
  \centering
  \includegraphics{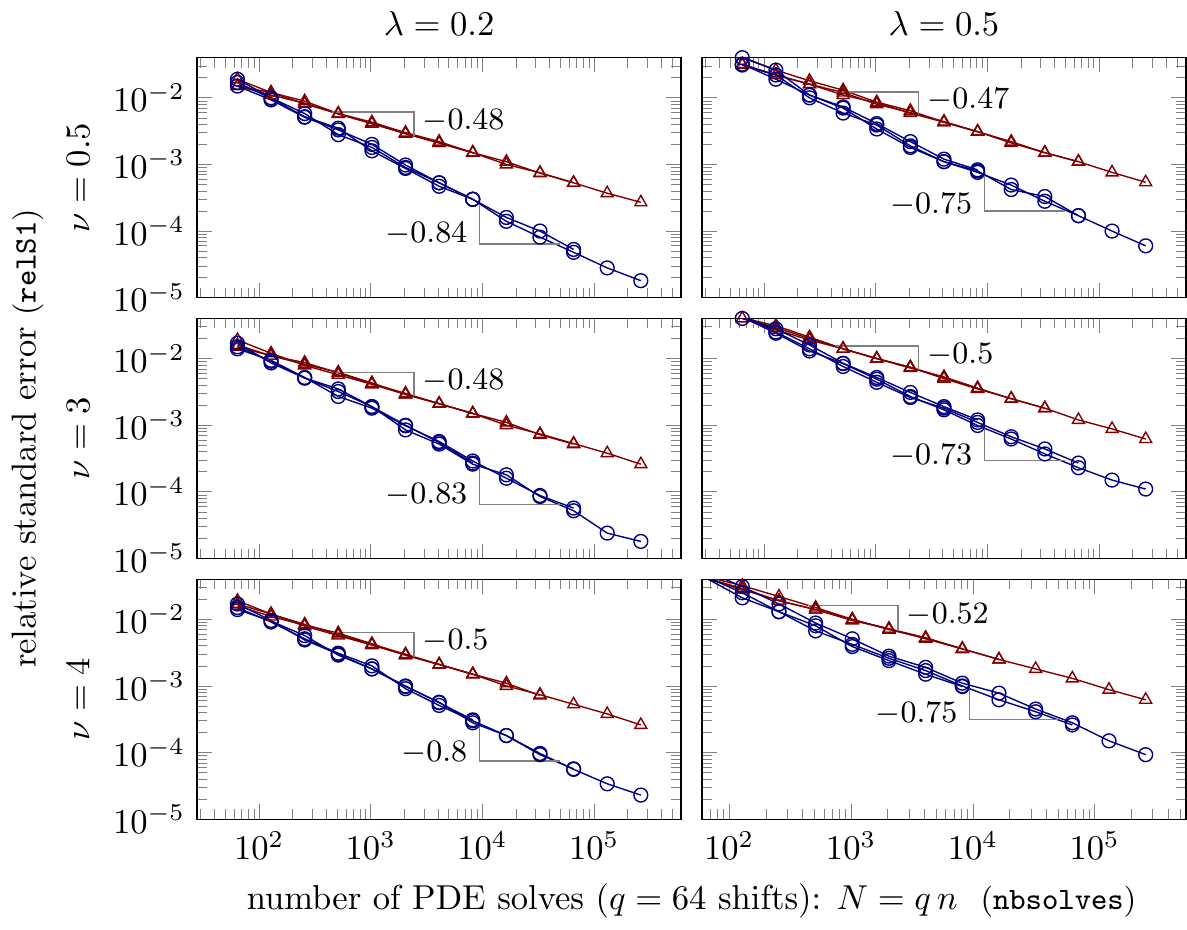}\\[4mm]
  \includegraphics{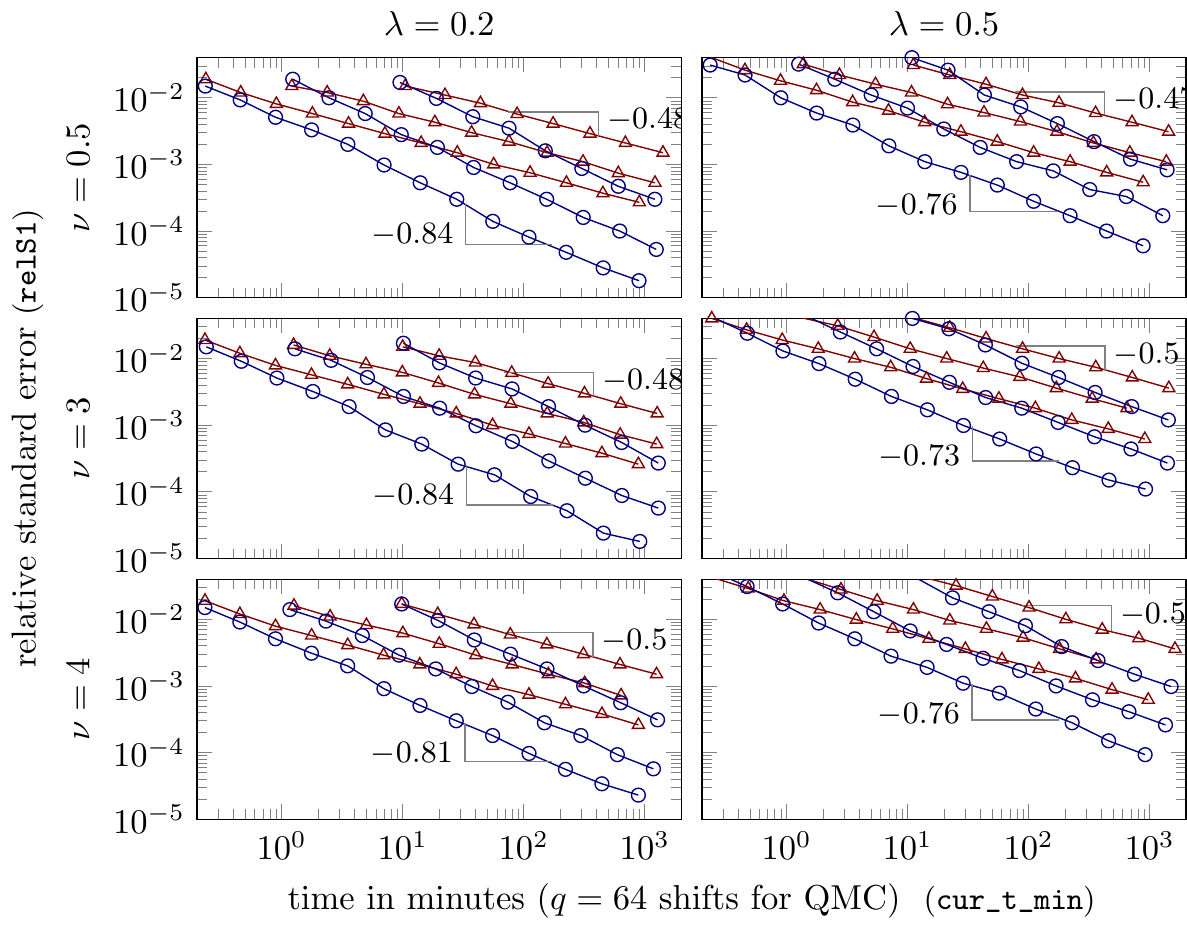}
  \caption{Relative standard error of the average solution over the
    unit cube in 3D against the total number of
 PDE solves (top) and the execution time (bottom)
for Monte Carlo (red triangles) and the lattice sequences (blue
circles). As in Fig.~\ref{fig:2D-relS1}, the results appear from
bottom to top from the coarsest mesh with
$(m_0,h)=(7,0.24)$ to the finest mesh with $(m_0,h)=(28,0.06)$.}
\label{fig:3D-relS1}
\end{figure}

In Fig.~\ref{fig:3D-relS1} we plot again the relative standard error
for six combinations of parameters against the total number of PDE
solves and against the calculation time. We observe convergence rates from
$N^{-0.73}$ up to $N^{-0.84}$ for our QMC rules, and
the expected $N^{-0.5}$ for the plain MC method. The lattice sequences
were again constructed with the actual values of $b_j$ from~\eqref{eq:bj},
except for the combination of $\lambda=0.5$ and $m_0 = 28$
where we get the near astronomical dimensionalities ranging from
about 15 to 37~million, as $\nu$ increases (see Table~\ref{tab:dim}); for
these cases we replaced $b_j$ by the convenient upper
bound given in~\eqref{eq:bj-lam}.\bigskip

\noindent {\bf Acknowledgement.} We gratefully acknowledge the financial
support from the Australian Research Council (FT130100655 and
DP150101770), the KU Leuven research fund (OT:3E130287 and C3:3E150478),
the Taiwanese National Center for Theoretical Sciences (NCTS) --
Mathematics Division, and the Statistical and Applied Mathematical
Sciences Institute (SAMSI) 2017 Year-long Program on Quasi-Monte Carlo and
High-Dimensional Sampling Methods for Applied Mathematics.


\begin{thebibliography}{99}
%\small

\bibitem{Ad:81}
R.J. Adler, \emph{The Geometry of Random Fields}, Wiley, London, 1981.


\bibitem{Apel:1999} T. Apel.
\newblock {\em Anisotropic Finite Elements: Local Estimates and Applications}.
\newblock Advances in Numerical Mathematics. Teubner, 1999.

\bibitem{BaNiZi:07} C.~Bacuta, V.~Nistor and L.~Zikatanov, Improving
  the rate of convergence of high-order finite elements on
  polyhedra {I}: {A} priori estimates, \emph{Numer.~Func.~Anal.~Opt.}
  \textbf{26}, 613--639, 2006.

\bibitem{BlCoDa:16}
P. Blanchard, O. Coulaud and E. Darve,
Fast hierarchical algorithms for generating Gaussian random fields,
\emph{Preprint hal-01228519}, 18 February 2016, available
at {\tt \verb+https://hal.inria.fr/hal-01228519+}

\bibitem{ChWo:94} G.~Chan and A.T.A.~Wood, {Simulation of stationary Gaussian
processes in $[0,1]^d$}, \emph{J. Comput. Graph. Stat.} \textbf{3}, 409-432,
1994.

\bibitem{ChWo:97} G. Chan and A.T.A. Wood, {Algorithm AS 312: An
  Algorithm for simulating stationary Gaussian random fields},
\emph{Appl. Stat.-J. Roy. St. C} \textbf{46}, 171--181, 1997.

\bibitem{ChScTe:11} J. Charrier, R. Scheichl and A. Teckentrup,
Finite element error analysis of elliptic PDEs with random
coefficients and its application to multilevel Monte Carlo methods,
{\em SIAM J. Numer. Anal.} {\bf 51},  322-352, 2013.

\bibitem{CKN06} R. Cools, F.Y. Kuo and D. Nuyens, Constructing embedded
   lattice rules for multivariate integration, \emph{SIAM J. Sci.
   Comput.} \textbf{28}, 2162--2188, 2006.

\bibitem{DiNe:97}
C.R. Dietrich and G.H. Newsam, {Fast and exact simulation of
stationary Gaussian processes through circulant embedding of the
covariance matrix}, \emph{SIAM J. Sci. Comput.} \textbf{18}, 1088--1107, 1997.

\bibitem{FeKuSl:17} M. Feischl, F.Y. Kuo and I.H. Sloan, Fast random
    field generation with $H$-matrices, \emph{Preprint
    arXiv:1702.0863728}, 28 February 2017, available at {\tt
    \verb+https://arxiv.org/abs/1702.08637+}

\bibitem{GKNSSS:15} I.G. Graham, F.Y. Kuo, J.A. Nichols, R. Scheichl, Ch.
    Schwab and I.H. Sloan, Quasi-Monte Carlo finite element methods for
    elliptic PDEs with log-normal random coefficients, {\em Numer.
    Math.} \textbf{131}, 329 -- 368, 2015.

\bibitem{GrKuNuScSl:11}
I.G. Graham, F.Y. Kuo, D. Nuyens,  R. Scheichl and I.H. Sloan,
Quasi-Monte carlo methods for elliptic PDEs with random coefficients and applications,
\emph{J. Comput. Phys.} \textbf{230}, 3668-3694, 2011.

\bibitem{paper1}
I.G. Graham, F.Y. Kuo, D. Nuyens,  R. Scheichl and I.H. Sloan,
 Analysis of circulant embedding methods for sampling stationary
 random fields,
\emph{Preprint arXiv:1710.00751}, 28 Sep 2017, available at {\tt
    \verb+https://arxiv.org/abs/1710.00751+}

\bibitem{GrScUl:13} I.G. Graham, R. Scheichl and E. Ullmann, Mixed finite
    element analysis of lognormal diffusion and multilevel Monte Carlo
    methods, {\em Stoch. PDE: Anal. Comp.} \textbf{4}, 41-75, 2013.

\bibitem{HaPeSc:12}
H. Harbrecht, M. Peters and R. Schneider,
On the low-rank approximation by the pivoted Cholesky decomposition,
\emph{Appl. Numer. Math.} \textbf{62}(4), 428-440, 2012.

\bibitem{KuNu:15} F.Y. Kuo and D. Nuyens, Application of quasi-Monte Carlo
    methods to PDEs with random coefficients - analysis and
    implementation, {\em Found.~Comput.~Math.}
    \textbf{16}, 1631–-1696, 2016.

\bibitem{LPS14}
  G. Lord, C. Powell and T. Shardlow,
  {\em An Introduction to Computational Stochastic PDEs}, Cambridge University
  Press, Cambridge, 2014.

\bibitem{NaHaSu:98a} R.L. Naff, D.F. Haley and E.A. Sudicky,
  {High-resolution Monte Carlo simulation of flow and conservative
  transport in heterogeneous porous media 1. Methodology and flow
  results}, \emph{Water Resour.~Res.}, \textbf{34}, 663--677, 1998.

\bibitem{NK14} J.A. Nichols and F.Y. Kuo.
\newblock Fast component-by-component construction of randomly shifted lattice
  rules achieving $\mathcal{O}(n^{-1+\delta})$ convergence for unbounded
  integrands in $\mathbb{R}^s$ in weighted spaces with {POD} weights.
\newblock {\em J.~Complexity}, \textbf{30}, 444--468, 2014.

\bibitem{ScWa:79} A.H. Schatz and L.B. Wahlbin, Maximum norm estimates
  in the finite element method on plane polygonal domains. {Part 2: R}efinements.
\emph{ Math. Comput.}  \textbf{33},  465--492,  1979.

\bibitem{TSGU13}
  A.L. Teckentrup, R. Scheichl, M.B. Giles and E. Ullmann,
  {Further analysis of multilevel Monte Carlo methods for elliptic
  PDEs with random coefficient},
  \emph{Numer. Math.} {\bf 125}, 569--600, 2013.

\end{thebibliography}
\end{document}